\newcommand{\Addresses}{{
		\bigskip
		\footnotesize
		
	 \textsc{Département de mathématiques et applications, École Normale Supérieure, PSL Research University, 45 rue d'Ulm,
			Paris, France, 75005.}\par\nopagebreak
		\textit{E-mail address}, \texttt{tristan.ozuch-meersseman@ens.fr}
	}}
\newtheorem{thm}{Theorem}[section]
\newtheorem{lem}[thm]{Lemma}
\newtheorem{prop}[thm]{Proposition}
\newtheorem{cor}[thm]{Corollary}
\theoremstyle{definition}
\newtheorem{defn}{Definition}[section]
\newtheorem{rem}[thm]{Remark}
\newtheorem{note}[thm]{Note}
\DeclareMathOperator{\Rm}{\textup{Rm}}
\DeclareMathOperator{\R}{\textup{R}}
\DeclareMathOperator{\Ric}{\textup{Ric}}
\author{OZUCH Tristan}
\affil{École Normale Supérieure, PSL Research University}
\date{}
\title{Perelman's functionals on cones \\ Construction of type III Ricci flows coming out of cones}
\begin{document}
\maketitle 
\begin{abstract}
 In this paper, we are interested in conical structures of manifolds with respect to the Ricci flow and, in particular, we study them from the point of view of Perelman's functionals.
	
 In a first part, we study Perelman's $\lambda$ and $\nu$ functionals of cones and characterize their finiteness in terms of the $\lambda$-functional of the link. As an application, we characterize manifolds with conical singularities on which a $\lambda$-functional can be defined and get upper bounds on the $\nu$-functional of asymptotically conical manifolds.

 We then present an adaptation of the proof of Perelman's pseudolocality theorem and prove that cones over some perturbations of the unit sphere can be smoothed out by type III immortal solutions on the Ricci flow.
\end{abstract}

\tableofcontents

\section{Introduction}
 Just like in the study of a lot of geometric equations, a crucial aspect of the study of Ricci flows is the analysis of singularities, and in particular the possibility to continue the flow through some of them. One of the type of singularity people are interesting in at the moment is conical sigularities, the local geometry of some points of the manifolds is modeled on that of a Riemannian cone.

 There have been a lot of work on desingularizing such manifolds. This has been done thanks to expanding solitons coming out of cones, see \cite{gs}. A good picture of a Ricci flow continued through a conical singularity that was formed is given in \cite{FIK} where a gradient shrinking soliton shrinks into a cone (the singularity) that is then smoothed out by a gradient expanding soliton.

 As a consequence, the research of expanding solitons asymptotic to cones is currently a particularly topical subject (see \cite{SS,DMod,Dth}). Cones manifolds are therefore a subject of interest for the study of Ricci flows.
\\

 In this paper, we first study cone manifolds from the point of view of Perelman's $\lambda$ and $\nu$ functionals and characterize their finiteness and give several lower bounds for cones over some perturbations of the sphere or its quotients. We use this to characterize manifolds with conical singularities with finite $\lambda$-functional and give some upper bounds on the $\nu$-functional of asymptotically conical manifolds and manifolds with conical singularities.

 We then adapt the proof of Perelman's pseudolocality theorem to prove that cones over some perturbations of the unit sphere can be smoothed out by an immortal type III solution of the Ricci flow coming out of the cone that are asymptotic to the cone at all times. In some cases, these manifolds are gradient expanding solitons.
\subsection{Main definitions}
Let us start by giving some definitions of the mathematical objects we will be considering. The definitions will be minimal, for more developped explanations, see \cite{TheRF} for detailed notes on the Ricci flow or \cite{KleinLott} for notes focused on Perelman's papers.
\\

A \textit{Ricci flow} on a differential manifold $N$ on an interval $I$ is a family of Riemannian metrics $(g_t)_{t \in I}$ on $N$ ($t$ will be referred to as the \textit{time}) that are satisfying the following evolution equation :
\begin{equation}
\partial_tg_t = -2\Ric(g_t),
\end{equation}
where $\Ric(g_t)$ is the Ricci curvature associated to the metric $g_t$.
\\

 A \textit{Ricci soliton} is a particular case of a Ricci flow that is a fixed point of the Ricci flow, up to pull-back by diffeomorphisms and scaling of the metric (they are self similar solutions of the Ricci flow).

In other words, it means that if $(g_t)_t$ is a Ricci soliton, there exists a one-parameter family of diffeomorphisms $\zeta_t$ and a scaling factor $\gamma$ such that $g_t = (1+\gamma t)\zeta_t^*g_0$.
\begin{rem}
The factor is affine because the Ricci tensor is scale invariant while the metric is not.
\end{rem}

\begin{rem}
 Note that if $\zeta_t$ is generated by a vector field $-V$, being a Ricci soliton is equivalent to satisfying what is called the \textit{Ricci soliton equation} :
\begin{align}\label{ricci soliton equation}
\Ric + \mathcal{L}_V g_0 -\frac{\gamma}{2} g_0 = 0.
\end{align}
\end{rem}

 An \textit{expanding} soliton satisfies $\gamma>0$, a \textit{steady} soliton corresponds to $\gamma = 0$ and a \textit{shrinking} soliton corresponds to $\gamma<0$.
\\

 A \textit{cone}  over a \textit{link} $(N,g^N)$, noted \textit{C(N)} is defined as :
$$(C(N),g)=\left(\mathbb{R}^+\times N, dr^2+r^2g^N\right).$$
\begin{note}
Whenever we will be working on a cone, we will be noting $r$ the coordinate on the $\mathbb{R}^+$ factor and $dr^2$ the associated metric.

 Along this paper, we will consider $N$ of dimension \textbf{n}, that is, $C(N)$ of dimension \textbf{(n+1)} and $M$ a manifold of dimension \textbf{(n+1)}.
\end{note}

 We will call a Ricci flow \textit{nonsingular} if it is defined on an interval of the form $[t_0,+\infty)$ (for example, expanding and steady solitons are immortal).
\\

 We will call Ricci flow of \textit{Type III} on $M$ if it is defined for times $t\in[0,+\infty)$ and has controlled curvature in the following way :

 There exists $C>0$ such that for all $t$ :
 $$|\Rm|(.,t)\leqslant \frac{C}{t}.$$
 \begin{rem}
 It is the needed condition to take limits of blowdowns of Ricci flow (process described in the appendix), the usual process to construct expanding solitons.
 \end{rem}

We will say that a Ricci flow $(g_t)_{t\in (0,+\infty)}$ on $M = \mathbb{R}^+\times N$ is \textit{coming out of the cone $C(N)$} if we have the two following properties :

$\;\;$ 1)$\;\;$The metric space $(\mathbb{R}^+\times N\backslash (0,.), g_t)$ converges to the metric space $(C(N),d^{C(N)})$ in the Gromov-Hausdorff sense as $t\to 0$.

$\;\;$ 2)$\;\;$On $M\backslash\left\{(0,.)\right\}$, $g_t$ converges \textit{smoothly} to $dr^2+r^2g^N$ where $r(x) = d^{C(N)}(x,(0,.))$.

\subsection{Presentation of the main results}
\subsubsection{Characterization of cones with finite $\mu$ and $\nu$-functionals}
We define a notion of Perelman's $\mu$ and $\nu$-functionals on Riemannian cones and characterize their finiteness.

 In a lot of cases, the $\mu$-functional of a cone is equal to $-\infty$ and so is the $\lambda$-functional.

 A characterization of cones with infinite $\mu$ or $\lambda$-functional is given in terms of the $\lambda$-functional of its link, namely :
\begin{thm}
Given $N$ a compact $n$-dimensional ($n \geqslant 2$) Riemannian manifold, noting :
$$
\left\{
    \begin{array}{ll}
        \mu^{C(N)}(\tau):= \mu\left(\left(\mathbb{R}^+\times N,dr^2+r^2g^N\right),\tau\right),\\
        \nu^{C(N)}:= \nu\left(\mathbb{R}^+\times N,dr^2+r^2g^N\right),\\
        \lambda^{C(N)}:= \lambda\left(\mathbb{R}^+\times N,dr^2+r^2g^N\right),\\
        \lambda^{N}:= \lambda\left(N,g^N\right).
    \end{array}
\right.
$$

We have the following informations on Perelman's functionals on the cone depending on the link :
\begin{itemize}
\item For the $\mu$ and $\nu$-functionals, we have :
\\
For all $\tau>0$ :
\begin{align*}
&\mu^{C(N)}(\tau) = \nu^{C(N)} = -\infty,\;\text{  if and only if : }\;\lambda^N\leqslant (n-1).
\end{align*}

\item For the $\lambda$-functional, we have:

\[\lambda^{C(N)}= -\infty,\;\text{  if and only if : }\;\lambda^N< (n-1),\]
\indent and :
\[\lambda^{C(N)} = 0 ,\;\text{  if and only if : }\; \lambda^N\geqslant (n-1).\]
\end{itemize}
\end{thm}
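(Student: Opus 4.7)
The strategy rests on two structural features of the cone: its homothety invariance under $r\mapsto \lambda r$, which makes $\lambda^2 g$ isometric to $g$, and its product structure, which permits spectral decomposition on the link $N$. Writing the scalar curvature as $R^{C(N)} = (R^N - n(n-1))/r^2$ and expanding $u = \sum_k \phi_k(r)\psi_k(\theta)$ in $L^2(N)$-eigenfunctions of $-4\Delta^N + R^N$ (eigenvalues $\lambda_k \geq \lambda^N$), Perelman's energy diagonalises:
\[
\int_{C(N)}\!\left(4|\nabla u|^2 + R^{C(N)} u^2\right) dV = \sum_k \int_0^{\infty}\!\left[4\phi_k'(r)^2 r^n + (\lambda_k - n(n-1))\phi_k(r)^2 r^{n-2}\right] dr.
\]
This reduces the question to a family of weighted one-dimensional inequalities on $(\mathbb{R}^+, r^n\,dr)$, whose central ingredient is the sharp Hardy inequality $4\int \phi'^2 r^n\,dr \geq (n-1)^2 \int \phi^2 r^{n-2}\,dr$, in which the constant is not attained.

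For the $\lambda$-functional, combining Hardy with the diagonalisation produces the mode-wise lower bound $(\lambda_k - (n-1))\int \phi_k^2 r^{n-2}\,dr$. When $\lambda^N \geq n-1$ every mode is non-negative, so $\lambda^{C(N)} \geq 0$; the scaling relation $Q(u(\alpha\cdot)) = \alpha^2 Q(u)$ produces compactly-supported test functions with Rayleigh quotient approaching $0^+$, whence $\lambda^{C(N)} = 0$. Conversely, when $\lambda^N < n-1$, the non-attainment of Hardy allows us to exhibit $\phi$ on the ground mode with strictly negative quotient, and the same scaling sends it to $-\infty$, giving $\lambda^{C(N)} = -\infty$.

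For the $\mu$- and $\nu$-functionals, the identity $\mu(\lambda^2 g, \lambda^2\tau) = \mu(g, \tau)$ combined with $\lambda^2 g\cong g$ forces $\mu^{C(N)}(\tau)$ to be independent of $\tau$, hence equal to $\nu^{C(N)}$. Writing $w = e^{-f/2}(4\pi\tau)^{-(n+1)/4}$ normalised with $\|w\|_{L^2}=1$ and rescaling via $w_\alpha(x) = \alpha^{(n+1)/2}w(\alpha x)$, one obtains
\[
\mathcal{W}(g, w_\alpha, \tau) = \tau\alpha^2 F(w) - (n+1)\log\alpha + E(w) - \tfrac{n+1}{2}\log(4\pi\tau) - (n+1),
\]
where $F(w) = \int(4|\nabla w|^2 + Rw^2)\,dV$ and $E(w) = -\int w^2 \log w^2\,dV$. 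Optimising over $\alpha$ reduces $\mu^{C(N)}$ to $\inf_w\{E(w) + \tfrac{n+1}{2}\log F(w)\}$ up to an additive constant. When $\lambda^N < n-1$, a test function with $F(w) < 0$ drives $\mathcal{W} \to -\infty$ as $\alpha\to\infty$. At the borderline $\lambda^N = n-1$, one exhibits a sequence of test functions approaching the (non-attained) Hardy extremiser $r^{-(n-1)/2}\psi_0(\theta)$, refined by a non-product concentration that pushes the logarithmic term past the vanishing energy, to obtain $\mu^{C(N)} = -\infty$. Conversely, when $\lambda^N > n-1$, the strict Hardy gap combined with a weighted log-Sobolev inequality in the logarithmic variable $s = \log r$ produces a uniform lower bound on $\mathcal{W}$, so $\mu^{C(N)} > -\infty$.

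The main difficulty is the critical case $\lambda^N = n-1$ for the $\mu$-functional: the Rayleigh quotient is non-negative with infimum zero (not attained), so the naive test-function argument used for $\lambda^N < n-1$ fails, and one must carefully track how the logarithmic term in $\mathcal{W}$ competes with the vanishing of $F$ after scale-optimisation, exploiting the precise non-attainment of the sharp Hardy constant. Dually, establishing the log-Sobolev inequality with an explicit constant on $C(N)$ for $\lambda^N > n-1$ is the substantive analytic step behind the finiteness of $\mu^{C(N)}$.
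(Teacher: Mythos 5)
Your treatment of the $\lambda$-functional statement is a genuinely different and valid route: you use the spectral decomposition of $-4\Delta^N + R^N$ on the link and the sharp one-dimensional Hardy inequality on $(\mathbb{R}^+, r^n\,dr)$, whereas the paper constructs explicit product test functions $v = b r^{-a}\tilde u\,\chi$ and checks integrability of $r^{n-2a-2}$ versus $r^{n-2a}$. Both give the same dichotomy; your version is cleaner for $\lambda$ because the $\mathcal{F}$-functional is quadratic and hence diagonalises over the modes $\psi_k$. Likewise the scale-optimisation identity reducing $\nu^{C(N)}$ to $\inf_w\{E(w)+\tfrac{n+1}{2}\log F(w)\}$ (up to a dimensional constant) is correct, and the direction ``$\lambda^N \leqslant n-1 \Rightarrow \nu^{C(N)}=-\infty$'' follows cleanly: when $\lambda^N<n-1$ take $F(w)<0$; when $\lambda^N=n-1$ take a sequence approaching the (non-attained) Hardy extremiser $r^{-(n-1)/2}\psi_0$ with a fixed outer cutoff, for which $F(w_\epsilon)\to 0^+$ while $E(w_\epsilon)$ stays bounded, so that $\log F(w_\epsilon)\to-\infty$. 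This is actually cleaner than the paper's treatment of the borderline case, and the phrase ``non-product concentration'' is both unnecessary and misleading — product test functions $\phi(r)\psi_0(\theta)$ already do the job.

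The genuine gap is in the hard converse direction $\lambda^N>n-1 \Rightarrow \nu^{C(N)}>-\infty$. Your scale optimisation tells you that you must prove $E(w)+\tfrac{n+1}{2}\log F(w)$ is bounded below, i.e. a log-Sobolev inequality on the cone, but this is where the spectral decomposition that worked for $\lambda$ becomes unavailable: the Nash entropy $E(w)=-\int w^2\log w^2$ is not additive over the $L^2(N)$-eigenmodes, so it cannot be bounded mode-by-mode, and you cannot simply invoke the Hardy gap on the ground mode. Your proposed remedy --- ``the strict Hardy gap combined with a weighted log-Sobolev inequality in the logarithmic variable $s=\log r$'' --- is not spelled out; as written it is not clear what coupling between the angular and radial variables you use, nor how the angular part of the entropy is controlled. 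The paper solves exactly this problem with a different, non-spectral separation of variables: it writes $f(r,\cdot)=\tilde f(r,\cdot)+a_r$, normalising $\int_N e^{-\tilde f}(4\pi\tau r^{-2})^{-n/2}\,dv=1$ fibrewise, so that $\mathcal W^{C(N)}$ splits into $\int_0^\infty\bigl[\mathcal W^N(\tilde f,g^N,\tau r^{-2})-n(n-1)\tau r^{-2}\bigr]\,d\rho_r$ plus a purely radial term. The essential ingredient is then the asymptotic control $\mu^N(\sigma)\geqslant \sigma\lambda^N - A - \tfrac{n}{2}\log_+\sigma$ for large $\sigma$, which lets $\lambda^N>n-1$ enter, and after Jensen the radial term is handled by precisely the radial log-Sobolev (radial restriction of Euclidean) plus weighted Hardy inequality you name, but applied jointly with a carefully chosen splitting $\tau=\tau_0+(\tau-\tau_0)$ of the gradient term. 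Without the fibrewise normalisation and the $\mu^N(\sigma)\gtrsim\sigma\lambda^N$ lower bound, your sketch has no mechanism to transfer the hypothesis $\lambda^N>n-1$ into control of the entropy $E(w)$, and I do not see how to complete the argument along the lines you indicate.
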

\begin{rem}
This makes clear that it is possible to have $\lambda$ bounded while the minimum of the scalar curvature is arbitrarily negative (here we can have $\R_{min}=-\infty$ and $\lambda = 0$).
\end{rem}
\indent $\lambda^N>(n-1)\;$ is a quite strong condition on the positivity of the curvature that limits the possible topologies, and sizes of the link. (recall that : $\R_{min}\leqslant \lambda\leqslant \R_{av}$ on compact manifolds). 

The result is actually a log-Sobolev inequality on cones and the condition $\lambda^N>(n-1)$ can also be seen as a condition implying a dimension free log-Sobolev inequality on the link, since for example, $\Ric^N> \frac{n-1}{n}g^N$ implies $\lambda^N>(n-1)$ and $\Ric^N> \frac{n-1}{n}g^N$ implies a log sobolev inequality with constant : $1$ on N, see \cite{ca}.

\subsubsection{Perelman's functionals on manifolds with conical singularities and asymptotically conical manifolds}
As a consequence of the previous results on cone and as a justification of the usefulness of our definitions, we can characterize on which manifolds with conical singularities (see definition \ref{manifold with conical singularities}) it is possible to define a $\lambda$-functional on such manifolds.

\begin{cor}
	Let $(M^n,g)$ be a compact manifold with conical singularities (see definition \ref{manifold with conical singularities}) such that one singularity is modeled on a cone $C(N)$ on a section $N$ such that $\lambda^N < (n-2)$.
	
	Then $$\lambda^M = -\infty.$$
	
	Conversely, if each singularity at a $x_i$ is modeled on a cone $C(N_i)$ with a link $N_i$ such that $\lambda^N > (n-2)$, then, $$\lambda^M > -\infty.$$
\end{cor}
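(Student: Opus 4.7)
The plan is to derive both implications from the preceding theorem, applied to a link of dimension $n-1$ (so that the threshold $(n-1)-1 = n-2$ appears), combined with a dilation-concentration argument for the first assertion and an IMS-type localization for the converse.

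For the first assertion, $\lambda^{N} < n-2$ together with the theorem gives $\lambda^{C(N)} = -\infty$, so there exists $u \in C^\infty_c((0,\infty)\times N)$ whose Rayleigh quotient $Q(u) := \int (4|\nabla u|^2 + \R u^2)\, dV \big/ \int u^2\, dV$ is strictly negative. I would then exploit the scale-invariance of the cone: setting $u_\sigma(r,\theta) := u(\sigma r,\theta)$ for $\sigma > 0$, a direct change of variables on $dr^2 + r^2 g^N$ yields $Q(u_\sigma) = \sigma^2 Q(u)$, while the support of $u_\sigma$ concentrates in $\{r\in[a/\sigma, b/\sigma]\}$ as $\sigma \to +\infty$. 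For $\sigma$ sufficiently large, this support lies inside a conical chart around $x_i$ in $M$, so $u_\sigma$ extends by zero to a legitimate test function on $M$ with $Q^M(u_\sigma) = Q(u_\sigma) \to -\infty$, forcing $\lambda^M = -\infty$.

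For the converse, each $\lambda^{N_i} > n-2$ gives $\lambda^{C(N_i)} = 0$ by the theorem. Choose a smooth partition of unity $\chi_0^2 + \sum_{i\geq 1} \chi_i^2 = 1$ with each $\chi_i$ ($i\geq 1$) supported in a conical neighborhood $U_i$ of $x_i$ and $\chi_0$ supported in a compact subset $K$ of the regular part. The IMS-type identity
$$\int |\nabla u|^2\, dV = \sum_{i\geq 0} \int |\nabla(\chi_i u)|^2\, dV - \sum_{i\geq 0} \int |\nabla \chi_i|^2 u^2\, dV,$$
combined with $C := 4\sup_i \sup |\nabla \chi_i|^2$ and $\R_0 := \inf_K \R$, gives
$$\int (4|\nabla u|^2 + \R u^2)\, dV \;\geq\; \sum_{i\geq 1} \lambda^{C(N_i)} \int (\chi_i u)^2\, dV \;+\; \R_0 \int (\chi_0 u)^2\, dV \;-\; C \int u^2\, dV \;\geq\; (\min(\R_0,0) - C)\int u^2\, dV,$$
where the bound on each conical patch uses $\lambda^{C(N_i)}=0$ applied to $\chi_i u$, a valid test function compactly supported in $U_i \subset C(N_i)$. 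This yields $\lambda^M \geq \min(\R_0,0)-C > -\infty$.

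The principal obstacle is verifying that the conical charts, scaling, and partition-of-unity cutoffs interact correctly with the precise definition of ``manifold with conical singularities'' (Definition~\ref{manifold with conical singularities}) and with the class of admissible test functions in $\lambda^M$. If the metric is isometric to a truncated cone in some neighborhood of each singularity, both arguments are clean; if it is only asymptotic to a cone, one must absorb a small perturbation of the metric in the Rayleigh quotient—harmless in the first part, since the divergent $\sigma^2 Q(u)$ dominates, but requiring more care in the second, e.g.\ by replacing $\lambda^{C(N_i)}=0$ with $\lambda \geq -\varepsilon$ on the chosen neighborhood and shrinking $U_i$ accordingly.
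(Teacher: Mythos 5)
Your proof is correct and follows essentially the same route as the paper: localize near each singular point via the asymptotically conical charts, transfer the cone-level conclusions (the theorem applied to the $(n-1)$-dimensional link gives divergence when $\lambda^{N}<n-2$ and Hardy-type positivity when $\lambda^{N_i}>n-2$), and control the compact remainder by smoothness. Your scaling argument $u_\sigma(r,\theta)=u(\sigma r,\theta)$ and the IMS localization make explicit what the paper delegates to its Appendix A.3 and to ``estimate just like in 2.3.2.'' The one step you rightly flag but leave open --- replacing $\lambda^{C(N_i)}=0$ by ``$\lambda\geqslant -\varepsilon$ on a shrunken neighborhood'' --- deserves emphasis: it is not automatic, because the $o(1/r^2)$ errors in the scalar curvature and the gradient terms coming from the asymptotic-convergence definition could a priori overwhelm the mere nonnegativity of the cone Rayleigh quotient. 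What saves the argument is that the \emph{strict} inequality $\lambda^{N_i}>n-2$ leaves a strictly positive slack $c>0$ in the weighted Hardy inequality, giving $4\int|\nabla v|^2\,dV + \int \R^{C(N_i)}v^2\,dV \geqslant c\int r^{-2}v^2\,dV$ for $v$ supported in a small ball, and this positive multiple of $\int r^{-2}v^2$ dominates the $o(1/r^2)$ perturbation once the support is small enough. This is exactly what the paper encodes by requiring the discrepancy between $\mathcal{F}^M$ and the conical model to be less than $|\lambda^{N_i}-(n-2)|$; without invoking that slack, your $\lambda\geqslant -\varepsilon$ claim would be circular.
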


\begin{rem}
	It was proven by Wang and Dai, in Wang's PhD thesis, that the $\lambda$-functional was not infinite when $R^N>(n-2)$ for each conical singularity link. Since $\lambda^N\geqslant\min(R^N)$, we recover their result and have a precise threshold. 
	
	Note that with the definition (\ref{manifold with conical singularities}) of a manifold with conical singularities we chose, we cannot decide for the case $\lambda^N = (n-2)$. If there is a fast enough convergence to the conical model at each singularity, then $\lambda^N = (n-2)$ implies $\lambda^M > -\infty$.
\end{rem}

\begin{cor}
	For $(M^n,g)$, a manifold smoothly asymptotic to the cone $C(N)$ at infinity, if $\lambda^N\leqslant (n-2)$, then, $$\nu^M(g) = - \infty.$$
	
	If, $\lambda^N > (n-2)$, then, $$\nu^M \leqslant \nu^{C(N)}.$$
\end{cor}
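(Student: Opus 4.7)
The plan is to transplant near-minimizing test functions for Perelman's $\mathcal{W}$ functional from the cone $C(N)$ to $M$ via the asymptotic diffeomorphism, relying on two ingredients: the scale invariance of $\mu^{C(N)}$, and the smooth convergence $\Phi^{*}g^{M}\to g^{C(N)}$ at infinity. Writing test functions in the form $u=e^{-f/2}$ (so the admissibility $\int u^{2}(4\pi\tau)^{-n/2}d\mu=1$ remains meaningful for compactly supported $u$), the dilation $\sigma_{c}:(r,\theta)\mapsto(cr,\theta)$ satisfies $\sigma_{c}^{*}g^{C(N)}=c^{2}g^{C(N)}$, which together with the scaling identity $\mathcal{W}(c^{2}g,u,c^{2}\tau)=\mathcal{W}(g,u,\tau)$ yields $\mu^{C(N)}(c^{2}\tau)=\mu^{C(N)}(\tau)$ for every $c>0$. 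Hence $\tau\mapsto\mu^{C(N)}(\tau)$ is constant, equal to $\nu^{C(N)}$, and any smooth compactly supported $u$ on $C(N)$ with support in an annulus $\{a\le r\le b\}$ gives, via $u\circ\sigma_{c}^{-1}$, a test function with support in $\{ca\le r\le cb\}$ and the same value $\mathcal{W}(g^{C(N)},\cdot,c^{2}\tau_{0})$. A preliminary density step (cutting off near $r=0$ and near $r=+\infty$) guarantees that $\nu^{C(N)}$ is approximated over smooth compactly supported test functions.

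Next I would carry out the transplant. Asymptotic conicity provides a diffeomorphism $\Phi:C(N)\setminus B_{R_{0}}\to M\setminus K$ with $\Phi^{*}g^{M}-g^{C(N)}\to 0$ smoothly as $r\to+\infty$. Given $u$ as above, I choose $c$ so large that $ca>R_{0}$, push the rescaled $u_{c}=u\circ\sigma_{c}^{-1}$ to $M$ through $\Phi$, and extend by zero to obtain $\tilde{u}_{c}$. Because $\Phi^{*}g^{M}$ is arbitrarily close to $g^{C(N)}$ in $C^{2}$ on the compact support of $u_{c}$ once $c$ is large, the Dirichlet term, the scalar curvature term, the entropy term, and the normalizing integral $\int_{M}\tilde{u}_{c}^{2}(4\pi c^{2}\tau_{0})^{-n/2}d\mu^{M}$ each differ from their cone counterparts by $o(1)$. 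A small multiplicative correction to restore the unit-mass constraint produces $\tilde{u}_{c}^{\mathrm{norm}}$ with
\[
\mathcal{W}\bigl(g^{M},\tilde{u}_{c}^{\mathrm{norm}},c^{2}\tau_{0}\bigr)=\mathcal{W}\bigl(g^{C(N)},u,\tau_{0}\bigr)+o(1)\qquad\text{as }c\to+\infty.
\]

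Both statements of the corollary then follow at once. If $\lambda^{N}\le n-2$, the main theorem, applied to the $(n-1)$-dimensional link, yields $\nu^{C(N)}=-\infty$; given $A>0$, I choose $u$ with $\mathcal{W}(g^{C(N)},u,\tau_{0})<-A$, and the transplant gives $\mu^{M}(c^{2}\tau_{0})<-A$ for $c$ large, whence $\nu^{M}=-\infty$. If $\lambda^{N}>n-2$, then $\nu^{C(N)}$ is finite, and for each $\varepsilon>0$ I choose $u$ with $\mathcal{W}(g^{C(N)},u,\tau_{0})<\nu^{C(N)}+\varepsilon$, transplant to obtain $\nu^{M}\le\mu^{M}(c^{2}\tau_{0})\le\nu^{C(N)}+2\varepsilon$, and let $\varepsilon\to 0$.

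The hard part is the pair of technical lemmas compressed into this outline: the density claim, that smooth compactly supported $u$ are $\mathcal{W}$-dense in the admissible class on a Riemannian cone (so that neither the tip nor the infinite-volume end obstructs reduction to compact support), and the quantitative transplant estimate, which uses the $C^{k}$ asymptotic convergence $\Phi^{*}g^{M}\to g^{C(N)}$ to control simultaneously the scalar curvature, volume form, and normalization discrepancies on the shifted support of $u_{c}$. Both become routine once the asymptotic model is smooth enough, but they are where all the actual work is concentrated.
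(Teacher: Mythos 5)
The paper does not write out a proof for this corollary — it only records the upper bound in a short remark — so there is no explicit argument to compare against, but your strategy is the natural one and it is correct. The two ingredients you isolate (scale invariance of $\mu^{C(N)}$, which makes it constant in $\tau$ and equal to $\nu^{C(N)}$, and the quantitative $C^{k}$-decay of $\Phi^{*}g^{M}-g^{C(N)}$ that lets you transplant annulus-supported test functions from radius $[ca,cb]$ on the cone to $M$ with $o(1)$ error at scale $c^{2}\tau_{0}$) are exactly what the remark ``the $\nu$-functional of the cones at infinity give an upper bound for the entropy of the whole manifold'' is gesturing at, and your dimension bookkeeping (the link $N$ is $(n-1)$-dimensional, so the main theorem's threshold becomes $n-2$) is right.

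One point worth sharpening. For the case $\lambda^{N}\leqslant n-2$ the density lemma you invoke is essentially free: the test functions that the paper itself constructs to drive $\mathcal{W}^{C(N)}$ to $-\infty$, namely $v_{\epsilon}=b\,r^{-a}\tilde{u}\,\chi_{[\epsilon,2r_{0}]}$ as $\epsilon\to0$, are already annulus-supported, so you do not need an abstract approximation result — you can dilate these directly. For the case $\lambda^{N}>n-2$ the density claim is a genuine (if standard) point: a near-minimizer of $\mathcal{W}^{C(N)}$ may concentrate near the tip, and to cut it off without spoiling the value you need, for instance, a logarithmic cutoff together with absolute integrability of $|\R|\,u^{2}r^{n-1}$ near $r=0$, which in turn you can extract from finiteness of $\mathcal{W}(u,\tau)$ together with the weighted Hardy inequality used elsewhere in the paper. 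You flag this as the locus of the real work, which is a fair assessment; it is the only place where ``routine'' deserves to be audited.
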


\subsubsection{A global pseudolocality theorem}

\indent We give a simple condition implying that a manifold will generate a nonsingular type III Ricci flow by just checking its $\nu$-functional. The proof is an adaptation of the proof of Perelman's pseudolocality theorem :

\begin{prop}
For all $n\geqslant 3$, there exists $\eta_n>0$ (supposed optimal for the following property), such that, for any $n$-dimensional Riemannian manifold $(M,g)$ such that the Ricci flow starting at it exists for a short time :

If the manifold satisfies $$\nu(g)> -\eta_n,$$ then the Ricci flow starting at $M$ exists and is nonsingular for all time $t>0$ and we have the following estimate, there exists $\alpha(\nu(g))$ such that, for all $t>0$ :
\begin{align*}
|\Rm(g_t)|\leqslant \frac{\alpha}{t}.
\end{align*}
\indent In other words, the Ricci flow is nonsingular and of type III.
\begin{rem}
By the description of the high curvature regions in dimension 3 done by Perelman, we can get an explicit constant $\eta_3 = \inf_{\alpha>0}(\eta(\alpha,3)) = 1-\log 2$.
\end{rem}
\end{prop}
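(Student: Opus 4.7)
The proof is a global adaptation of Perelman's pseudolocality theorem, in which the local log-Sobolev/isoperimetric assumption on a ball is replaced by the single global bound $\nu(g_0)>-\eta_n$. The key input is Perelman's monotonicity of $\nu$ under the Ricci flow, giving $\nu(g_t)\geq\nu(g_0)>-\eta_n$ throughout the maximal interval of existence, and in particular $\mu(g_t,\tau)\geq -\eta_n$ for every $\tau>0$. This is the global analogue of the local log-Sobolev inequality that Perelman uses on a fixed parabolic neighborhood, and is what will be contradicted if curvature concentrates too fast.

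The plan is to argue by contradiction: assuming no such positive $\eta_n$ exists, there is a sequence $\eta_k\to 0^+$, $n$-dimensional Ricci flows $(M_k,g_k(t))$ with $\nu(g_k(0))>-\eta_k$, and pairs $(x_k,t_k)$ with $t_k|\Rm(g_k(t_k))|(x_k)\to\infty$. A Hamilton-style point-picking argument relocates $(x_k,t_k)$ so that curvature is controlled by $2Q_k$ on a parabolic neighborhood of rescaled size growing to infinity, where $Q_k:=|\Rm(g_k(t_k))|(x_k)\to\infty$. Rescaling by $\tilde g_k(s):=Q_k\,g_k(t_k+s/Q_k)$ yields Ricci flows with $|\Rm|\leq 2$ on ever-larger parabolic neighborhoods of $(x_k,0)$ and $|\Rm(\tilde g_k(0))|(x_k)=1$. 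Since $\nu$ is scale-invariant and nondecreasing along Ricci flow, $\nu(\tilde g_k(s))\geq -\eta_k$ for all admissible $s$.

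A $\nu$-lower bound implies $\kappa$-non-collapsedness at all scales, so Hamilton's compactness theorem yields (along a subsequence) a limiting complete pointed ancient Ricci flow $(M_\infty,\tilde g_\infty(s),x_\infty)_{s\leq 0}$ with $|\Rm|\leq 2$, $|\Rm(\tilde g_\infty(0))|(x_\infty)=1$, and, since $\eta_k\to 0$, with the limiting entropy bound $\mu(\tilde g_\infty(s),\tau)\geq 0$ for all $s\leq 0$ and all $\tau>0$. It then suffices to construct a test pair $(f,\tau)$ on $(M_\infty,\tilde g_\infty(0))$ for which Perelman's $\mathcal{W}$-functional is strictly negative, which would force $\mu(\tilde g_\infty(0),\tau)<0$ and contradict the above.

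This last step is the main obstacle, and is where the value of $\eta_n$ is determined quantitatively. Following Perelman, one takes $f$ so that $(4\pi\tau)^{-n/2}e^{-f}$ is a truncated gaussian concentrated at $x_\infty$ at the curvature scale $\tau\sim 1$; the contribution of the term $\tau\R$ to $\mathcal{W}$ is then quantitatively negative thanks to $|\Rm(x_\infty,0)|=1$, while the standard heat-kernel asymptotics control the $f$ and $|\nabla f|^2$ terms. The strictly positive constant $\eta_n$ is then defined as the infimum of $-\mathcal{W}(\tilde g_\infty(0),f,\tau)$ over all admissible blowup limits and all such Gaussian test functions, and the dependence $\alpha=\alpha(\nu(g))$ is read off from the scaling in the point-picking argument. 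In dimension three, Perelman's fine description of $\kappa$-solutions and of high-curvature regions identifies the sharp blowup models explicitly and yields the value $\eta_3=1-\log 2$ stated in the remark.
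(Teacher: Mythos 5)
Your overall strategy — argue by contradiction, point-pick a high-curvature spacetime point, rescale to unit curvature, use a $\kappa$-noncollapsing estimate coming from the $\nu$-bound to extract a Hamilton--Cheeger--Gromov sublimit, and then contradict the inherited entropy lower bound on the limit — matches the paper's overall structure. The genuine gap is in the final, decisive step, and the discrepancy also changes what would count as a complete proof.

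The paper's proof reuses Perelman's argument from Lemma 33.4 of Kleiner--Lott: one works with the conjugate heat kernel $u$ (the fundamental solution of the backward heat equation centered at the high-curvature spacetime point), considers the quantity $v=\left[\tau(2\Delta f-|\nabla f|^2+\R)-f-n\right]u$, invokes Perelman's differential Harnack inequality $v\leqslant 0$, and pairs $v$ against a forward heat solution $h$ to conclude $v_\infty\equiv 0$. This forces the limit to satisfy the gradient shrinking soliton equation, which in turn forces the curvature to blow up as $t\to T_\infty$, contradicting the uniform bound obtained by point-picking. The collapsing case is handled separately (one gets a flat non-Euclidean limit and drives $\mathcal{W}\to-\infty$). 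In other words, the contradiction ultimately comes from a rigidity statement (Harnack equality implies shrinking soliton), not from plugging a fixed explicit test function into $\mathcal{W}$.

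Your proposal replaces this by an explicit truncated-Gaussian test, and asserts that ``the contribution of the term $\tau\R$ to $\mathcal{W}$ is quantitatively negative thanks to $|\Rm(x_\infty,0)|=1$.'' This is the step that does not hold. The bound $|\Rm(x_\infty,0)|=1$ gives no sign information on $\R(x_\infty,0)$; in the typical blow-up scenarios the scalar curvature near the chosen point is \emph{positive} (e.g.\ neck regions, or any of Perelman's $\kappa$-solutions in dimension $3$), so $\tau\R$ contributes \emph{positively} to $\mathcal{W}$ and your proposed test function will not produce $\mathcal{W}<0$. Consequently, the claimed ``infimum of $-\mathcal{W}$ over admissible blowup limits and Gaussian tests'' has not been shown to be positive, and the dependence $\alpha=\alpha(\nu)$ has not actually been established. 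If you want to avoid the Harnack/soliton argument and work directly with the limit as you set it up, the standard route is the rigidity statement that $\nu(g)\leqslant 0$ with equality only for flat $\mathbb{R}^n$: since $\mu(\tilde g_\infty(0),\tau)\geqslant 0$ for all $\tau$ by your limiting entropy bound, the limit must be Euclidean, contradicting $|\Rm(\tilde g_\infty(0))|(x_\infty)=1$. But this rigidity also requires a proof (and a lower semicontinuity of $\mu$ along the Cheeger--Gromov limit that you are currently glossing over). As written, the conclusion of the proof is not reached.

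One smaller discrepancy: you describe the extracted limit as an ancient flow; in the paper's argument the compactness is applied on a fixed finite parabolic neighborhood produced by the point selection, not on an ever-lengthening interval. This does not by itself invalidate your outline, but it is a second place where your setup diverges from the paper's and would need separate justification.
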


\subsubsection{Construction of type III solutions and expanding solitons coming out of some cones}
The global pseudolocality giving a way to ensure that a Ricci flow is of type III, we construct type III solutions of the Ricci flow asmptotic to some cones.

Realizing that some cones have a high $\nu$-functional, we prove that it is sometimes possible to smooth them out by manifolds (asymptotic to the initial cone) while keeping a large $\nu$-functional, so that we can apply the pseudolocality result of the previous part.
\\
\indent This smoothening process relies on the study of the renormalizations of the Ricci flow (applied to the link only) adapted to manifolds shrinking in finite time into a sphere. And the study of the $\mathcal{W}$-functional of perturbations of the unit sphere.
\\

 We get in particular the following result for some cones over perturbations of the unit sphere.
\begin{thm}
For all $n\geqslant 2$, there exists $\beta_1(n)<1<\beta_2(n)$ such that, for all metric $g^N$ on $\mathbb{S}^n$ satisfying the property noted $P(\beta_1,\beta_2)$ :
\begin{itemize}
\item \textsl{Positivity of the curvature} : The isotropic curvature when crossed with $\mathbb{R}^2$ is positive.
\item \textsl{$C^0$-closeness to the sphere} : 
$$\beta_1^2g^{\mathbb{S}^n}\leqslant g^N \leqslant \beta_2^2g^{\mathbb{S}^n}.$$
\item \textsl{Lower bound on the scalar curvature} :
$$\R^N\geqslant \frac{n(n-1)}{\beta_2^2}.$$
\end{itemize}

Then, there exists an immortal type III solution of the Ricci flow $(\mathbb{R}^{n+1},g_t)$ coming out of the cone 
$C(N) := (\mathbb{R}^+\times \mathbb{S}^n, dr^2 +r^2g^N)$ that stays asymptotic to it at all times.
\end{thm}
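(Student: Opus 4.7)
The plan is to produce, for each admissible metric $g^N$ on $\mathbb{S}^n$, a smooth metric $g_0$ on $\mathbb{R}^{n+1}$ that is asymptotic to $C(N)$ and satisfies $\nu(g_0) > -\eta_{n+1}$, and then to invoke the global pseudolocality Proposition directly. That proposition provides an immortal type III Ricci flow issued from $g_0$. The ``coming out of the cone'' statement is then obtained by performing the smoothing at an arbitrarily small scale $\epsilon$ and passing to a limit as $\epsilon \to 0$, using the uniform type III curvature bound together with Hamilton's compactness to extract a limit Ricci flow defined on $(0,+\infty)$ whose initial value is $C(N)$.

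To construct $g_0$, I would flow the link. The hypothesis that the isotropic curvature of $g^N \oplus g^{\mathbb{R}^2}$ is positive is precisely the Brendle--Schoen convergence hypothesis, so the Ricci flow $\bar g(s)$ on $\mathbb{S}^n$ starting from $g^N$ contracts to a round point at some finite extinction time $T$, and its parabolic renormalization $\tilde g(s)$ converges smoothly to the round unit sphere as $s \to T$. I would then define $g_0$ on $\mathbb{R}^+ \times \mathbb{S}^n$ as the warped product
\[
g_0 = dr^2 + r^2\,\tilde g(\sigma(r)),
\]
where $\sigma \colon \mathbb{R}^+ \to [0,T)$ is a smooth profile with $\sigma(r)=0$ for $r\geqslant 1$ (so $g_0 = C(N)$ outside the unit ball) and $\sigma(r) \to T$ fast enough as $r \to 0$ for $r^2 \tilde g(\sigma(r))$ to extend smoothly across the tip. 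This yields a smooth metric on $\mathbb{R}^{n+1}$ which is genuinely asymptotic to $C(N)$.

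The key estimate is $\nu(g_0) > -\eta_{n+1}$. The $C^0$-pinching combined with the lower bound $\R^N \geqslant n(n-1)/\beta_2^2$ ensures, for $\beta_1$ and $\beta_2$ sufficiently close to $1$, that $\lambda^N$ remains close to $\lambda^{\mathbb{S}^n}=n(n-1)$, and in particular $\lambda^N > n-1$. The corollary on asymptotically conical manifolds (applied in dimension $n+1$ with link of dimension $n$) then gives $\nu(g_0) \leqslant \nu^{C(N)}$, so it suffices to bound $\nu^{C(N)}$ from below. Since $C(\mathbb{S}^n) = \mathbb{R}^{n+1}$ has $\nu = 0$, a quantitative continuity analysis of the $\mathcal{W}$-functional on cones over perturbations of the round sphere should yield $\nu^{C(N)} > -\eta_{n+1}$ whenever $g^N$ is $C^0$-close enough to $g^{\mathbb{S}^n}$; this dictates the choice of $\beta_1(n)$ and $\beta_2(n)$.

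The main obstacle is exactly this quantitative lower bound on $\nu^{C(N)}$ under small $C^0$ perturbations of the link: one must show that the infimum of the $\mathcal{W}$-functional on $C(N)$ cannot drop below $-\eta_{n+1}$ as long as $g^N$ satisfies $P(\beta_1,\beta_2)$, and this requires controlling the competition between the possible decrease of $\mathcal{W}$ coming from perturbations concentrated near the tip and the uniform scalar curvature lower bound along the cone. A secondary subtlety is verifying that the limit flow from the $\epsilon \to 0$ procedure retains its asymptotic cone structure at every positive time; I would handle this by applying Perelman's classical local pseudolocality at large $r$, where the initial data already coincides with $C(N)$, to propagate the conical geometry at infinity forward in time and thus confirm that $g_t$ stays asymptotic to $C(N)$ for all $t>0$.
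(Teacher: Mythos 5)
Your construction of the smoothing metric $g_0 = dr^2 + r^2\,\tilde g(\sigma(r))$ via the renormalized Ricci flow of the link is essentially the paper's construction (the paper takes $\sigma(r)=\delta/r^2$ with the renormalized flow parameterized over $[0,+\infty)$, but this is only a reparameterization). The use of Bamler--Maximo type control and the $C^0$-closeness hypothesis is also along the paper's lines, as is the final appeal to global pseudolocality plus a blowdown, with Lott--Zhang to keep the asymptotics. However, there is a genuine gap in your key estimate.

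You want to show $\nu(g_0) > -\eta_{n+1}$, and to this end you invoke the corollary on asymptotically conical manifolds to obtain $\nu(g_0) \leqslant \nu^{C(N)}$, and then say ``it suffices to bound $\nu^{C(N)}$ from below.'' This is logically the wrong direction: $\nu(g_0) \leqslant \nu^{C(N)}$ together with $\nu^{C(N)} > -\eta_{n+1}$ gives no lower bound on $\nu(g_0)$ whatsoever. There is no general inequality bounding $\nu$ of a manifold asymptotic to a cone \emph{from below} by $\nu$ of the cone, and smoothing a cone can a priori collapse the entropy. What the paper does instead is bound $\nu(g_0)$ from below \emph{directly}: it shows (appendix~A.4) that the property $P(\beta_1',\beta_2')$, propagated along the renormalized flow by Proposition~\ref{Prop preservée}, implies a uniform lower bound $L(\epsilon_1,\epsilon_2,\epsilon_3)$ on the $\mathcal{W}$-functional of every link slice $\tilde g(\sigma(r))$; and then (appendix~A.3) that for $\sigma(r)=\delta/r^2$ with $\delta$ small, the scalar curvature of the warped product deviates from the cone expression by $\mathcal{O}(\delta)$ uniformly, so the separation-of-variables estimate for cones applies to $(M,g^\delta)$ with a controlled error, yielding $\nu(g^\delta) \geqslant -\Psi(\epsilon_1+\mathcal{O}(\delta),\epsilon_2,\epsilon_3)$. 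Your proposal is also missing this uniform control of all the link metrics along the flow, not just the initial one $g^N$: the entropy of the warped product is governed by the $\mathcal{W}$-functionals of $\tilde g(\sigma(r))$ for all $r$, so you need the Bamler--Maximo/Simon preservation of a $P(\beta_1',\beta_2')$-type property along the renormalized flow before the warped-product estimate can close.
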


\begin{rem}
	Having a positive curvature operator actually implies the first condition.
\end{rem}

\subsection{Introduction to Perelman's functionals}
Let us define Perelman's $\mathcal{F}$, $\lambda$, $\mathcal{W}$, $\mu$ and $\nu$ functionals and their principal features that will be used throughout the paper. There is a lot of literature about these quantities (see in particular \cite{TheRF, KleinLott} and for the first introduction by Perelman \cite{Perentformula}).

\subsubsection{The $\mathcal{F}$ and $\lambda$ functionals}
Let us start by defining the $\mathcal{F}$-functional, which is the base functional to define the other ones.
\begin{defn}
We define the \textit{$\mathcal{F}$-functional} :
\begin{align*}
\mathcal{F}(\phi,g):=\int_N\left(|\nabla \phi|^2+\R \right)e^{-\phi}dv,
\end{align*}
where $\R$ is the \textit{scalar curvature} of $g$.
\\
\end{defn}
From this, we can define another important quantity :
\begin{defn}
We can define the \textit{$\lambda$-functional} as :
\begin{align*}
\lambda(g) := \inf_{\phi}\mathcal{F}(\phi,g),
\end{align*}
where the infimum is taken among smooth $\phi$ such that
$$\int_M e^{-\phi}dv = 1.$$
\begin{rem}
Note that it is also the first eigenvalue of the operator
$$-4\Delta +\R,$$
and is always between the least value of the scalar curvature $\R_{min}$ and its average $\R_{av}$.

\end{rem}

\end{defn}
The main feature of this functional is that the Ricci flow can be see as a gradient flow \emph{up to diffeomorphisms} for it :
\begin{thm}
If $g$ and $\phi$ are evolving according to :
\begin{equation}
	\left\{
	\begin{array}{ll}
	\partial_t g &= -2\Ric,\\
	\partial_t \phi &= -\Delta \phi+|\nabla \phi|^2-\R,
	\end{array}
	\right.\label{evolution equation F}
\end{equation}

then :
\begin{align*}
\partial_t \left(\mathcal{F}(g_t,\phi_t)\right) = 2\int_M|\Ric+Hess \phi|^2 e^{-\phi}dv\geqslant 0,
\end{align*}
and the monotonicity is strict unless $(g,\phi)$ is a \textit{gradient steady soliton}, which is a steady soliton for which the vector field $V$ is $\nabla \phi$.
\end{thm}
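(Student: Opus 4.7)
The plan is to differentiate $\mathcal{F}$ directly along the coupled system and recognise the resulting pointwise expression as a perfect square after a careful integration by parts. First I would verify that the prescribed evolution of $\phi$ is exactly what preserves the weighted measure $dm := e^{-\phi}\,dv$: combining $\partial_t dv = \tfrac{1}{2}\mathrm{tr}_g(\partial_t g)\,dv = -\R\,dv$ with the given $\partial_t\phi$ yields
\[
\partial_t(e^{-\phi}\,dv) = (\Delta\phi - |\nabla\phi|^2)\,e^{-\phi}\,dv = -\Delta(e^{-\phi})\,dv,
\]
which integrates to zero on the closed manifold $M$. This legitimises using the drift Laplacian $\Delta_\phi\alpha := \Delta\alpha - \langle\nabla\alpha,\nabla\phi\rangle$, self-adjoint against $dm$, and keeps every subsequent integration by parts manifest.

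Next I would split $\partial_t\mathcal{F}$ into three contributions: the variation of $\R$, the variation of $|\nabla\phi|^2$, and the variation of the measure. The first is supplied by the standard Ricci-flow identity $\partial_t\R = \Delta\R + 2|\Ric|^2$. The second is computed from $|\nabla\phi|^2 = g^{ij}\partial_i\phi\,\partial_j\phi$ using $\partial_t g^{ij} = 2\Ric^{ij}$, which produces $2\Ric(\nabla\phi,\nabla\phi)$, together with the prescribed $\partial_t\phi$, which produces $2\langle\nabla\phi,\nabla(\partial_t\phi)\rangle$. The third contributes $(\R + |\nabla\phi|^2)(\Delta\phi - |\nabla\phi|^2)\,dm$ to the integral.

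The central work is to integrate by parts against $dm$ and reorganise. The $\Delta\R$ term, combined with the contracted second Bianchi identity $\nabla^j\Ric_{ij} = \tfrac{1}{2}\nabla_i\R$, converts into Ricci--Hessian cross-terms $\langle\Ric, Hess\,\phi\rangle$ together with $\Ric(\nabla\phi,\nabla\phi)$ pieces that absorb the metric contribution to $\partial_t|\nabla\phi|^2$ and part of the measure contribution. Simultaneously, the $\Delta\phi$ and $|\nabla\phi|^2$ factors inside $\nabla(\partial_t\phi)$, after a further integration by parts and a weighted Bochner-type rearrangement, yield the $2|Hess\,\phi|^2$ term and the remaining cross-terms with $d\phi\otimes d\phi$. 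All coefficients align so that the residual integrand is precisely the pointwise perfect square $2|\Ric + Hess\,\phi|^2$.

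The main obstacle is exactly this algebraic bookkeeping: the drift $-\Delta\phi + |\nabla\phi|^2 - \R$ is tailored so that, after the Bianchi manoeuvre, every cross-term pairs with the partner it needs and nothing survives outside the perfect square; any deviation from this precise choice would leave a non-negative or even indefinite residue. The equality case then follows at once: $\partial_t\mathcal{F} = 0$ forces the integrand to vanish pointwise, so $\Ric + Hess\,\phi \equiv 0$, which is the gradient steady soliton equation with potential $\phi$ as described in the theorem statement.
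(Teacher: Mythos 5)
The paper states this theorem as background in Section 1.4.1 without giving a proof, deferring to \cite{TheRF,KleinLott,Perentformula}, so there is no in-paper argument to compare against; I assess your outline on its own terms. Your route is the correct brute-force computation: differentiate $\mathcal{F}$ directly, observe that the weighted measure $dm=e^{-\phi}\,dv$ is preserved by the coupled system (your verification of this is right and is the key structural observation), then close the expression with the contracted Bianchi identity, Bochner's formula for $\Delta|\nabla\phi|^2$, and repeated integration by parts against $dm$. This differs from Perelman's standard argument, which instead notices that the given evolution is diffeomorphism-equivalent, via pull-back along the flow of $\nabla\phi$, to $\partial_t g=-2(\Ric+\mathrm{Hess}\,\phi)$, $\partial_t\phi=-\Delta\phi-\R$; for that system $e^{-\phi}\,dv$ is fixed \emph{pointwise}, so the general first-variation formula reduces to $\delta\mathcal{F}=-\int_M v_{ij}(\Ric_{ij}+\nabla_i\nabla_j\phi)\,dm$ and yields $2\int_M|\Ric+\mathrm{Hess}\,\phi|^2\,dm$ in one line with no Bochner manipulation. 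You pay in algebra what Perelman pays with a single diffeomorphism-invariance observation; both are legitimate, yours is more elementary but substantially longer.

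The one genuine weakness is that the whole content of the proof is precisely the ``algebraic bookkeeping'' that you describe in two sentences and then assert with ``all coefficients align''; this is the step where a sign error can hide, and it is not carried out. To make the outline watertight you should record the mechanism that makes the dangerous fourth-order terms disappear, namely $\int_M e^{-\phi}\Delta_\phi(\Delta\phi)\,dv=0$ because $\Delta_\phi$ is self-adjoint against $dm$, and then check explicitly that the surviving pieces assemble into $2\int_M\bigl(\langle\Ric,\mathrm{Hess}\,\phi\rangle+|\mathrm{Hess}\,\phi|^2\bigr)\,dm$. Relatedly, the phrase ``the residual integrand is precisely the pointwise perfect square'' is loose: the identity is an integral one, reached after discarding total $\Delta_\phi$-divergences; the integrand is not transformed pointwise into $2|\Ric+\mathrm{Hess}\,\phi|^2$ before the final step. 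The equality case and the identification with the gradient steady soliton equation are stated correctly once the monotonicity formula is in hand.
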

\begin{rem}
These functionals are invariant by diffeomorphism action.
\end{rem}
\subsubsection{The $\mathcal{W}$, $\mathcal{N}$, $\mu$ and $\nu$ functionals}
Let us now define the $\mathcal{W}$, $\mu$ and $\nu$ functionals that are the most useful functionals to study finite time singularities of the Ricci flow :
\begin{defn}
Let us give the formula right away :
\begin{itemize}
\item The \textit{$\mathcal{W}$-functional} is defined as :
\begin{align*}
\mathcal{W}(f,g,\tau) :&= \int_M\left[\tau\left(|\nabla f|^2+\R\right)+f-n\right]\left(\frac{e^{-f}}{(4\pi\tau)^{\frac{n}{2}}}dv\right) \\
&= \tau\mathcal{F}(\phi,g)+\mathcal{N}(\phi,g)-\frac{n}{2}\log(4\pi\tau)-n,
\end{align*}
where $\mathcal{N}(\phi,g) := \int_M \phi e^{-\phi}dv$ is \textit{Nash entropy} functional,
\\
wnd where $\phi := f+\frac{n}{2}\log(4\pi\tau)$ satisfies $\int e^{-\phi} = 1$.

\item The \textit{$\mu$-functional} (often referred to as \textit{entropy}) is defined by :
\begin{align*}
\mu(g,\tau) := \inf_f \mathcal{W}(f,g,\tau),
\end{align*}
where the infimum is taken among the smooth $f$ such that 
$$\int\frac{e^{-f}}{(4\pi\tau)^{\frac{n}{2}}}dv =1.$$
\item The \textit{$\nu$-functional} is defined by :
\begin{align*}
\nu(g) := \inf_{\tau>0} \mu(g,\tau).
\end{align*}
\end{itemize}
\end{defn}
These quantities can again be used to see the Ricci flow as a gradient flow \textit{up to diffeomorphisms and change of scale} :
\begin{thm}
If $g$ and $\phi$ are evolving according to :
\begin{equation}
	\left\{
	\begin{array}{ll}
	\partial_t g &= -2\Ric,\\
	\partial_t f &= -\Delta f+|\nabla f|^2-\R+\frac{n}{2\tau},\\
	\partial_t \tau &= -1,
	\end{array}
	\right.\label{evolution equation W}
\end{equation}

then :
\begin{align*}
\partial_t \left(\mathcal{W}(f_t,g_t,\tau(t))\right) = 2\tau\int_M\left|\Ric+Hess f-\frac{1}{2\tau}g\right|^2 \frac{e^{-f}}{(4\pi\tau)^{}\frac{n}{2}}dv\geqslant 0,
\end{align*}
and the monotonicity is strict unless $(g,\phi,\tau)$ is a \textit{gradient shrinking soliton} which is a shrinking soliton for which the vector field $V$ is $\nabla f$ and $\gamma = -\frac{1}{2\tau}.$
\end{thm}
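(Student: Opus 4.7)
The plan is to rewrite the $\mathcal{W}$-functional in a form adapted to the conjugate heat equation, differentiate in time using the coupled evolution, and identify a pointwise Bochner-type identity whose integral gives the claimed perfect square. Throughout, set $u := \frac{e^{-f}}{(4\pi\tau)^{n/2}}$ so that $\int_M u\,dv = 1$ and $\nabla u = -u\,\nabla f$.

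First, I would verify that the density $u$ satisfies the conjugate heat equation $\partial_t u = -\Delta u + \R u$. Indeed, from $\partial_t f = -\Delta f+|\nabla f|^2-\R+\frac{n}{2\tau}$ and $\partial_t\tau=-1$ one computes $\partial_t\log u = \Delta f-|\nabla f|^2+\R$, while the identity $\Delta u = u(|\nabla f|^2-\Delta f)$ (which follows from $\nabla u=-u\nabla f$) gives exactly $-\Delta u+\R u = u(\Delta f-|\nabla f|^2+\R)$. Combined with $\partial_t(dv)=-\R\,dv$ under Ricci flow, this confirms that $\int_M u\,dv$ is preserved in $t$, so $f$ remains admissible. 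Then I would use integration by parts via $\int u|\nabla f|^2\,dv=\int u\,\Delta f\,dv$ to rewrite the functional in the symmetric form
\begin{align*}
\mathcal{W}(f,g,\tau)=\int_M\bigl[\tau(2\Delta f-|\nabla f|^2+\R)+f-n\bigr]\,u\,dv=:\int_M v\,u\,dv,
\end{align*}
where $v:=\tau(2\Delta f-|\nabla f|^2+\R)+f-n$; this form is the right one because its evolution decomposes cleanly against $u\,dv$.

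The heart of the argument is Perelman's pointwise identity: under the coupled system \eqref{evolution equation W}, one shows
\begin{align*}
\square^*(vu) \;=\; -2\tau\,\Bigl|\Ric+\mathrm{Hess}\,f-\frac{1}{2\tau}\,g\Bigr|^2 u,
\end{align*}
where $\square^*=-\partial_t-\Delta+\R$ is the conjugate heat operator on the evolving background. Since $\square^* u=0$, this reduces to computing $(\partial_t-\Delta)v+2\nabla f\cdot\nabla v$ and showing the result equals $-2\tau|\Ric+\mathrm{Hess}\,f-\tfrac{1}{2\tau}g|^2-v/\tau$, the $-v/\tau$ being absorbed by the $\partial_t\tau$ factor. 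The inputs are standard but numerous: the Ricci-flow evolutions $\partial_t\R=\Delta\R+2|\Ric|^2$ and $\partial_t\Delta f=\Delta\partial_t f+2\langle\Ric,\mathrm{Hess}\,f\rangle$, the Bochner formula $\tfrac12\Delta|\nabla f|^2=|\mathrm{Hess}\,f|^2+\langle\nabla\Delta f,\nabla f\rangle+\Ric(\nabla f,\nabla f)$, and the twice-contracted Bianchi identity $\mathrm{div}\,\Ric=\tfrac12\nabla\R$. Expanding $|\Ric+\mathrm{Hess}\,f-\tfrac{1}{2\tau}g|^2$ and comparing term by term against the raw derivative of $v$ is the main obstacle; the terms $|\Ric|^2$, $|\mathrm{Hess}\,f|^2$, $\Ric(\nabla f,\nabla f)$, $\langle\Ric,\mathrm{Hess}\,f\rangle$, $\R/\tau$, and $\Delta f/\tau$ must all match, with the cross-terms produced by the contracted Bianchi identity closing the computation.

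Once the pointwise identity is established, I would integrate it over $M$: using $\partial_t(dv)=-\R\,dv$,
\begin{align*}
\partial_t\int_M vu\,dv = \int_M\bigl(\partial_t(vu)-\R vu\bigr)dv=\int_M\bigl(-\square^*(vu)-\Delta(vu)\bigr)dv = \int_M 2\tau\bigl|\Ric+\mathrm{Hess}\,f-\tfrac{1}{2\tau}g\bigr|^2 u\,dv,
\end{align*}
the Laplacian term vanishing on the compact $M$ (or by appropriate decay in the noncompact setting). Nonnegativity of the integrand then yields the monotonicity, and equality at some $t$ forces $\Ric+\mathrm{Hess}\,f=\tfrac{1}{2\tau}g$ pointwise, which is exactly the gradient shrinking soliton equation \eqref{ricci soliton equation} with $V=\nabla f$ and $\gamma=-\tfrac{1}{2\tau}$, giving the strictness clause.
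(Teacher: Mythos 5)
The paper does not actually prove this theorem: it appears in the expository section ``Introduction to Perelman's functionals'' as background, with the reader referred to \cite{TheRF,KleinLott,Perentformula}. Your proposal reproduces the canonical argument from those references, namely Perelman's pointwise identity
\[
\square^*(vu) = -2\tau\left|\Ric+\mathrm{Hess}\,f-\tfrac{1}{2\tau}g\right|^2 u, \qquad \square^* = -\partial_t-\Delta+\R,
\]
with $u=(4\pi\tau)^{-n/2}e^{-f}$ and $v=\tau(2\Delta f-|\nabla f|^2+\R)+f-n$. Your preliminary checks are correct: $u$ solves the conjugate heat equation under the given coupled system, the integration-by-parts rewrite $\int u|\nabla f|^2=\int u\Delta f$ gives the symmetric form of $\mathcal{W}$, and the integration step (dropping $\int_M\Delta(vu)\,dv$ on a closed manifold) correctly converts the pointwise identity into the stated monotonicity; the equality clause follows since $u>0$. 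The one place where you assert rather than prove is the central pointwise identity itself: you list the required evolution equations, the Bochner formula, and the contracted second Bianchi identity, and claim the terms match, but you do not carry out the term-by-term expansion. That computation is standard and your list of inputs is exactly right, so this is a deferral rather than a gap in the approach, but a fully self-contained proof would need that verification written out.
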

\begin{rem}
These functionals are invariant by diffeomorphism action and also by \textit{parabolic scaling} : $(g,\tau)\mapsto(\alpha g, \alpha \tau)$ for $\alpha>0$.
\end{rem}
\begin{note}
We will sometimes use the following change of functional :
$$u^2 = \frac{e^{-f}}{(4\pi\tau)^{\frac{n}{2}}}.$$
For purpose of notation, we will also note $\mathcal{W}$ functional applied to the change of function $u$, for a $n$-dimensional manifold $M$, the expression is the following :
\begin{align}
\mathcal{W}^M(f,g,\tau) =& \int_M [\tau(|\nabla f|^2 + \R)+f-n]\left(\frac{e^{-f}}{(4\pi\tau)^{\frac{n}{2}}}dv\right) \nonumber\\
=&\int_M [\tau(4|\nabla u|^2+\R u^2)-u^2\log(u^2)]dv-\frac{n}{2}\log(4\pi\tau)-n.\label{corresp f u}
\end{align}
\end{note}

\subsection{$\mathcal{W}$ and $\mu$-functionals on cones}
We will define the functionals as the usual expression on the smooth part of the cone, that is the cone without its tip. This definition is justified by the fact that given a manifold with conical singularities or asymptotic to some cone, the values of the functionals on the cone give informations on their value on the manifold.

 All along the paper, we will be interested in cones from the point of view of Ricci flows, and we will in particular look at them through Perelman's functionals which take a particular form.
\begin{note}
A few basic facts about to keep in mind about these functionals :
\begin{itemize}
\item For a cone, for all $\tau>0$, $\mu(g,\tau) = \nu(g)$ because the cone is scale invariant while the $\mu$-functional is invariant by parabolic scaling.
\item The $\mu$-functional of the Euclidean cone : $\mathbb{R}^{n+1} = C(\mathbb{S}^n)$ is vanishing, and this is the only cone with this property (in other cases, $\nu^{C(N)}<0$).
\\
The minimizers at $\tau$ are Gaussians corresponding to the potential : $\frac{|.-x_0|^2}{4\tau}$ for any $x_0$ on the manifold.
\end{itemize}
\end{note}
\begin{rem}
Even if it would be possible to make every computation at $\tau = 1$ thanks to the first point, we will make every computation at a given $\tau > 0$ to emphasize its independance and because they will also be used for warped product later in this paper.
\end{rem}
\subsubsection{The basic formula}
On a cone, the formula for the entropy has a particular expression which is given by next lemma. We will refer to it as "basic".
\\

The definition we will take is the usual formula for the cone with the tip excluded which is an incomplete manifold. This definition is motivated by the applications to manifolds with conical singularities and manifolds asymptotically conical we present.
\begin{lem}
\begin{align}
\mathcal{W}^{C(N)}(f,g^{C(N)},\tau) = \int_0^\infty\int_N&\left[\tau\left((\partial_r f)^2+\frac{|\nabla^N f|^2+(\R^N-n(n-1))}{r^2}\right)\right.\nonumber\\ &\left. \Huge+f-(n+1)\right]\left(\frac{e^{-f}}{(4\pi\tau)^{\frac{n+1}{2}}}r^ndv dr \right).
\label{basic}
\end{align}
\begin{proof}
On a cone over a manifold $N$ : $(C(N),g^{C(N)}) = (\mathbb{R}^+\times N,dr^2+r^2g^N)$, with the coordinates $(r,x)\in \mathbb{R}^+\times N$ such that $d^{C(N)}\left((0,x),(r,x)\right)=r$.

 We have the following formulas for any functional $f$ on the cone :
\begin{itemize}\label{quant cone}
\item $\Ric =
\left[ \begin{array}{cc}
0 & 0  \\
0 & \Ric^N-(n-1)g^N  \end{array} \right]$ (it vanishes except in the direction of the link),
\item $\R = \frac{\R^N-(n(n-1))}{r^2}$,
\item $|\nabla f|^2 = (\partial_r f)^2+\frac{|\nabla^N f|^2}{r^2}$,
\item $dv^{C(N)} = r^ndv^Ndr$ (we will note $dv$ the volume form on $N$ in the following).
\end{itemize}
Now, if we plug this in the expression of the entropy, we get :
\begin{align*}
\mathcal{W}^{C(N)}&(f,g,\tau) := \int_{C(N)} \left[\tau(|\nabla^N f|^2+\R)+f-(n+1)\right](4\pi\tau)^{-\frac{n+1}{2}}e^{-f}dv\\
&= \int_0^\infty\int_N \left[\tau\left((\partial_r f)^2+\frac{|\nabla^N f|^2+(\R^N-n(n-1))}{r^2}\right)+f-(n+1)\right](4\pi\tau)^{-\frac{n+1}{2}}e^{-f}r^ndv dr.
\end{align*}
\end{proof}
\end{lem}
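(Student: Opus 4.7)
The plan is to perform a direct substitution, since the statement is the formula one obtains by writing the definition of $\mathcal{W}$ on the smooth part $C(N)\setminus\{0\}$ in the natural polar coordinates $(r,x)\in\mathbb{R}^+\times N$. The lemma is essentially a bookkeeping identity, and the only care needed is to keep the dimensions straight: since $N$ has dimension $n$, the cone $C(N)$ has dimension $n+1$, so the Gaussian normalization must be $(4\pi\tau)^{-(n+1)/2}$ and the constant inside the bracket is $-(n+1)$.

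First I would assemble the four standard pieces of Riemannian data of a cone over a link, which can all be derived either from the general warped product formulas (with warping function $r$ on $\mathbb{R}^+$) or from a direct computation of Christoffel symbols in the coordinates $(r,x)$. These are: (i) the Ricci tensor has no radial component and equals $\Ric^N-(n-1)g^N$ on the $N$-slice; (ii) taking the trace gives the scalar curvature $\R^{C(N)} = (\R^N - n(n-1))/r^2$; (iii) the orthogonal splitting of the tangent bundle and the rescaling of the link metric by $r^2$ yield $|\nabla f|^2 = (\partial_r f)^2 + r^{-2}|\nabla^N f|^2$; and (iv) the volume form decomposes as $dv^{C(N)} = r^n\, dv^N\, dr$.

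Next I would plug these four ingredients into the defining formula of the $\mathcal{W}$-functional on an $(n+1)$-dimensional manifold, replacing $n$ by $n+1$ in the constant and in the normalization, and apply Fubini to split the integral over $C(N)$ into an iterated integral over $\mathbb{R}^+\times N$. The terms then group exactly as stated: the radial piece $(\partial_r f)^2$ and the tangential piece $r^{-2}|\nabla^N f|^2$ of the gradient combine with the scalar curvature contribution $r^{-2}(\R^N - n(n-1))$, while $f-(n+1)$ and the Gaussian weight $(4\pi\tau)^{-(n+1)/2}e^{-f}r^n\, dv^N\, dr$ come along unchanged.

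There is no real obstacle, as no convergence or regularity issue needs to be addressed at this stage: the integral is taken over the smooth locus $\mathbb{R}^+\times N$ with the singular point $r=0$ removed, in agreement with the convention adopted earlier in the subsection, and $f$ is assumed smooth with the usual normalization built into the definition of $\mathcal{W}$. The only point worth flagging for later sections is that the factor $r^n$ and the factor $r^{-2}$ in front of the link contribution will drive all the integrability conditions at $0$ and at $\infty$ in the subsequent analysis of finiteness of $\mu^{C(N)}$, but for the present lemma the formula is purely algebraic.
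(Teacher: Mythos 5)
Your proposal is correct and follows essentially the same route as the paper: write down the four standard cone formulas for $\Ric$, $\R$, $|\nabla f|^2$, and $dv$, substitute them into the definition of $\mathcal{W}$ on the $(n+1)$-dimensional smooth locus, and split the integral over $\mathbb{R}^+\times N$. Your remark that the dimension shift from $n$ to $n+1$ is the only point requiring care matches the paper's implicit bookkeeping.
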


\section{Lower bounds on the $\lambda$ and $\nu$-functionals of cones \\ Characterization of $N$ such that $\nu^{C(N)} > -\infty$ and $\lambda^{C(N)} > -\infty$}
Let us look more carefully at the $\mu$-functional of cones through a separation of variables formula and characterize, thanks to it, closed manifolds $N$ for which $\nu^{C(N)} = -\infty$.
\subsection{A separation of variables formula}
Let us introduce a separation of variables formula that is natural if one wants to emphasize the fact that the $\mathcal{W}$ functional of a cone is closely related to the behavior of the $\mathcal{W}$-functional on its link. 
\\
\indent This separation of variables will be particularly interesting to get lower bound on the cone $\mu$-functional.
\begin{lem}
On a cone $(C(N),g) = (\mathbb{R}^+\times N, dr^2+r^2g^N)$ with the coordinates $(r,x)\in \mathbb{R}^+\times N$ such that for all $x\in N$, $d^{C(N)}\left((0,x),(r,x)\right)=r$.
\\
We can define the following separation of variables :
\\
\\
$\forall f : \mathbb{R}\times N \to \mathbb{R}$,

$\exists !$ $(\tilde{f}, a)$, $\tilde{f}:C(N)\to \mathbb{R}$ and $a:\mathbb{R}^+\to \mathbb{R}$, such that for all $r>0$,

\begin{itemize}
\item $f(r,.) =  \tilde{f}(r,.) + a_r$ which gives $e^{-f} = e^{-a_r}e^{-\tilde{f}}$,
\item $\int_N(4\pi\tau \textbf{r}^{-2})^{-\frac{n}{2}} e^{-\tilde{f}}dv = 1$ (\textbf{notice the $r^{-2}$}),
\item $\int_0^\infty (4\pi\tau)^{-\frac{1}{2}}e^{-a_r}dr = 1$.
\end{itemize} 
Thanks to these, we can rewrite the expression as :
\begin{align}
\mathcal{W}^{C(N)}(f,g,\tau) &= \int_0^\infty \left[\mathcal{W}^{N}\left(\tilde{f},g^N,\frac{\tau}{r^2}\right)-n(n-1)\frac{\tau}{r^2}\right]\left(\frac{e^{-a_r}}{(4\pi\tau)^{\frac{1}{2}}} dr\right) \nonumber\\
&+\int_N \left(\int_0^\infty \left[\tau(\partial_r (\tilde{f}(r,.) + a_r))^2+a_r-1\right]\left(\frac{e^{-a_r}}{(4\pi\tau)^{\frac{1}{2}}} dr\right)\right)\left(\frac{e^{-\tilde{f}}}{\left(4\pi\tau r^{-2}\right)^{\frac{n}{2}}} dv\right).
\label{separation}
\end{align}
\begin{rem}
In the Euclidean cone, for a Gaussian centered at the origin (which is a minimizer of $\mathcal{W}^{\mathbb{R}^{n+1}}$), it gives :
$$\tilde{f} = \log\left(\frac{r^nvol(\mathbb{S}^n)}{(4\pi\tau)^{\frac{n}{2}}}\right),$$
and
$$a_r = \frac{r^2}{4\tau} - \log\left(\frac{r^nvol(\mathbb{S}^n)}{(4\pi\tau)^{\frac{n}{2}}}\right).$$
\end{rem}

\begin{proof}
By \eqref{basic}, we have :
\begin{align*}
\mathcal{W}^{C(N)}(f,g,\tau) = \int_0^\infty\int_N &\left[\tau\left((\partial_r f)^2+\frac{|\nabla^N f|^2+(\R^N-n(n-1))}{r^2}\right)\right.\\
&\left.+f-(n+1)\right]\frac{e^{-f}}{(4\pi\tau)^{\frac{n+1}{2}}}r^ndv dr .
\end{align*}
Now, if we define $a_r$ such that $\frac{e^{-a_r}}{4\pi\tau} = \int_N \frac{r^ne^{-f}}{(4\pi\tau)^{\frac{n}{2}}}dv$, and then define $\tilde{f} = f-a_r$. We get the existence of such a separation of variables, the unicity can also be checked.
\\
\indent Noting that : 
\\
$$r^n(4\pi\tau)^{-\frac{n+1}{2}} = (4\pi\tau r^{-2})^{\frac{n}{2}}(4\pi\tau)^{-\frac{1}{2}},$$
and separating the $\tilde{f}$ terms and the $a$ terms, we get :
\begin{align*}
\mathcal{W}^{C(N)}(f,g,\tau) =& \int_0^\infty \left(\int_N\left[\frac{\tau}{r^2}\left(|\nabla^N \tilde{f}|^2+(\R^N-n(n-1))\right)+\tilde{f}-n\right]\frac{e^{-\tilde{f}}}{\left(4\pi\tau r^{-2}\right)^{\frac{n}{2}}}dv\right)\left(e^{-a_r}(4\pi\tau)^{-\frac{1}{2}} dr\right) \\
&+\int_N \int_0^\infty [\tau(\partial_r f)^2+a_r-1]\left(\frac{e^{-a_r}}{(4\pi\tau)^{\frac{1}{2}}} dr\right) \left(\frac{e^{-\tilde{f}}}{\left(4\pi\tau r^{-2}\right)^{\frac{n}{2}}} dv\right),
\end{align*}
which is the wanted result.
\end{proof}
\end{lem}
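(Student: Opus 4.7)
The plan is to start from the basic formula of the preceding lemma and substitute the decomposition $f(r,x) = \tilde f(r,x) + a_r$, with $a_r$ chosen so that the two prescribed normalizations hold. Concretely, I would \emph{define} $a_r$ by
\[
e^{-a_r} := \int_N \frac{r^n}{(4\pi\tau)^{n/2}} e^{-f(r,\cdot)}\, dv,
\]
and then set $\tilde f := f - a_r$. The first normalization $\int_N (4\pi\tau r^{-2})^{-n/2} e^{-\tilde f}\, dv = 1$ is then immediate, and the second, $\int_0^\infty (4\pi\tau)^{-1/2} e^{-a_r}\, dr = 1$, follows by Fubini from the hypothesis that $f$ is admissible for $\mathcal{W}^{C(N)}$ together with $dv^{C(N)} = r^n\, dv\, dr$. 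Uniqueness of the pair $(\tilde f, a)$ is forced because the first integral condition pins down $a_r$ pointwise in $r$.

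Next I would substitute this decomposition into the basic formula. Since $a_r$ depends only on $r$, one has $|\nabla^N f|^2 = |\nabla^N \tilde f|^2$, while $\partial_r f = \partial_r \tilde f + a_r'$ does \textbf{not} split as a sum of squares. The key algebraic identity for the measure is
\[
r^n (4\pi\tau)^{-(n+1)/2} \;=\; (4\pi\tau r^{-2})^{-n/2} \cdot (4\pi\tau)^{-1/2},
\]
which lets me factor the volume form into a "link part" at scale $\tau/r^2$ and a one-dimensional radial part.

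The grouping of terms is the heart of the computation. I would split the integrand by writing $-(n+1) = -n + (-1)$ and regroup: put $\frac{\tau}{r^2}\bigl(|\nabla^N \tilde f|^2 + \R^N\bigr) + (\tilde f - n)$ on the "link side" — this is exactly the integrand of $\mathcal{W}^N(\tilde f, g^N, \tau/r^2)$ once paired with the measure $(4\pi\tau r^{-2})^{-n/2} e^{-\tilde f}\, dv$. The leftover curvature constant $-n(n-1)\tau/r^2$, pulled out of $\R^{C(N)} = (\R^N - n(n-1))/r^2$, integrates against the probability measure on $N$ to give precisely $-n(n-1)\tau/r^2$, accounting for the second summand inside the first bracket of the claimed formula. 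The remaining terms $\tau(\partial_r f)^2 + (a_r - 1)$ form the "radial side" and produce the second double integral; note in particular that the cross-term in $(\partial_r f)^2 = (\partial_r\tilde f + a_r')^2$ couples the two variables, which is why this piece must remain a genuine integral over $N\times\mathbb{R}^+$ rather than collapsing into a product.

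The main obstacle is purely bookkeeping: tracking how the normalization constants shuffle between the two sides, and being careful that the link contribution really reproduces $\mathcal{W}^N$ at the scale $\tau/r^2$ (with constant $-n$, not $-(n+1)$). Once the splitting $-(n+1)=-n-1$ is recognized as the device that matches the dimensional constants of $\mathcal{W}^N$ and of the one-dimensional radial entropy, the rest is a direct substitution and rearrangement.
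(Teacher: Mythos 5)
Your proposal is correct and follows essentially the same route as the paper: define $a_r$ by the link integral so that the $N$-normalization holds pointwise in $r$, set $\tilde f = f - a_r$, use the measure identity $r^n(4\pi\tau)^{-(n+1)/2} = (4\pi\tau r^{-2})^{-n/2}(4\pi\tau)^{-1/2}$, and split the integrand by writing $f-(n+1) = (\tilde f - n)+(a_r-1)$ while keeping the radial kinetic term $\tau(\partial_r f)^2$ intact as a genuine double integral. Your extra remarks — that the second normalization needs the admissibility of $f$ together with Fubini, that uniqueness is pinned down by the link-side normalization, and that the cross-term in $(\partial_r\tilde f + a_r')^2$ prevents the radial piece from factoring — are all correct and in fact make the argument slightly more careful than the paper's terse write-up.
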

\subsection{Lower bound on the cone $\mu$-functional}
We are interested in getting a lower bound for a cones $\mu$-functional. It becomes obvious from the separation of variables formulas that the $\mu$-functional is deeply linked to the behavior of the $\mathcal{W}$-functional on the link, and this \textit{at all scales} (because of the $\frac{\tau}{r^2}$ taking every single value in $(0,+\infty]$).

 Looking at this quantity at all scales is uncommon, as it is usually considered at very particular scales such as the time remaining to a possible singularity time.
\\

 It is known that this functional is vanishing as $\tau \to 0$ for smooth manifolds. And as $\mathcal{W}(..\tau) = \tau \mathcal{F}+\mathcal{N}-\frac{n}{2}\log \tau +C(n)$, it is natural to think that for large $\tau$, a minimizer will get closer and closer to a minimizer of $\mathcal{F}$. We will give a description of the behavior of this functional and its minimizers in the next parts.

\subsubsection{Study of $\tau\mapsto \mu(g^N,\tau)$ - Lower bounds at large $\tau$}
Here we are going to get sharp lower bounds on the left derivative of $\tau\mapsto\mu^N(g_0,\tau)$ that are only attained in the precise case when a minimizing function of $\mathcal{W}^N(.,g_0,\tau)$ is also a minimizer of $\mathcal{F}^N$.
\begin{lem}
$\tau\mapsto\mu(g_0,\tau)$ when $\lambda^N>-\infty$ is upper semicontinuous,

 and :
\begin{align}
\lim_{\tau_2\to\tau_1}\left(\frac{\mu(g_0,\tau_1)-\mu(g_0,\tau_2)}{\tau_1-\tau_2}\right)\geqslant \lambda^N-\frac{n}{2\tau_1}.
\end{align}
\indent This is sharp as the last inequality gets arbitrarily glose to being an equality when $\tau_1\to\infty$ since we have :

 For any compact manifold $N$, and $f_k$ minimizing of $\mathcal{W}^N(.,g_0,\tau_k)$ tends to a minimizer of $\mathcal{F}^N(.,g_0)$ in $H^1(N,g_0)$ as $\tau_k\to +\infty$.
\begin{proof}
See Appendix A.2. 
\end{proof}
\end{lem}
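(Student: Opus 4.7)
The plan is to exploit the rewriting
\[
\mathcal{W}(f,g_0,\tau) = \tau\mathcal{F}(\phi,g_0) + \mathcal{N}(\phi,g_0) - \tfrac{n}{2}\log(4\pi\tau) - n =: h_\phi(\tau),
\]
with $\phi = f + \tfrac{n}{2}\log(4\pi\tau)$ normalized by $\int e^{-\phi}dv = 1$. For each fixed $\phi$, $h_\phi$ is smooth in $\tau$ with derivative $\partial_\tau h_\phi(\tau) = \mathcal{F}(\phi,g_0) - \tfrac{n}{2\tau} \geqslant \lambda^N - \tfrac{n}{2\tau}$, and $\mu(g_0,\tau) = \inf_\phi h_\phi(\tau)$ is the infimum of a family of continuous functions of $\tau$, from which upper semicontinuity is immediate.

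For the derivative bound at a fixed $\tau_1 > 0$, the classical existence theory (Rothaus, applicable since $\lambda^N > -\infty$) produces a minimizer $\phi^*_\tau$ of $h_\cdot(\tau)$ at each $\tau$, and an $\varepsilon$-near-minimizer would do just as well. To handle the approach $\tau_2 \to \tau_1^-$ I would use $\phi^*_{\tau_1}$: from $\mu(g_0,\tau_2) \leqslant h_{\phi^*_{\tau_1}}(\tau_2)$ and $\mu(g_0,\tau_1) = h_{\phi^*_{\tau_1}}(\tau_1)$,
\[
\mu(g_0,\tau_1) - \mu(g_0,\tau_2) \;\geqslant\; \int_{\tau_2}^{\tau_1}\!\bigl(\mathcal{F}(\phi^*_{\tau_1},g_0) - \tfrac{n}{2s}\bigr)\,ds \;\geqslant\; \int_{\tau_2}^{\tau_1}\!\bigl(\lambda^N - \tfrac{n}{2s}\bigr)\,ds.
\]
The symmetric approach $\tau_2 \to \tau_1^+$ is handled identically, this time with $\phi^*_{\tau_2}$ in place of $\phi^*_{\tau_1}$ so that dividing by the negative quantity $\tau_1 - \tau_2$ flips the inequality in the correct direction. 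In either case the right-hand side divided by $\tau_1 - \tau_2$ is the integral average of $s \mapsto \lambda^N - n/(2s)$ over the relevant interval and tends to $\lambda^N - n/(2\tau_1)$.

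For the sharpness statement, let $(f_k)$ minimize $\mathcal{W}^N(\cdot,g_0,\tau_k)$ with $\tau_k \to \infty$ and set $\phi_k := f_k + \tfrac{n}{2}\log(4\pi\tau_k)$. Testing $\mu(g_0,\tau_k)$ against a fixed minimizer $\phi^\lambda$ of $\mathcal{F}^N$ gives the upper bound $\mu(g_0,\tau_k) \leqslant \tau_k\lambda^N + \mathcal{N}(\phi^\lambda,g_0) - \tfrac{n}{2}\log(4\pi\tau_k) - n$, while the lower bound furnished by the previous step reads $\mu(g_0,\tau_k) \geqslant \mu(g_0,1) + \lambda^N(\tau_k - 1) - \tfrac{n}{2}\log\tau_k$; together they pin down $\mu(g_0,\tau_k) = \tau_k\lambda^N + O(\log\tau_k)$. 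Rewriting $\mu(g_0,\tau_k) = \tau_k\mathcal{F}(\phi_k,g_0) + \mathcal{N}(\phi_k,g_0) + O(\log\tau_k)$ and combining the Jensen upper bound $\mathcal{N}(\phi_k,g_0) \leqslant \log\vol(N,g_0)$ with a matching lower bound on $\mathcal{N}(\phi_k,g_0)$ supplied by the log-Sobolev inequality on $(N,g_0)$ forces $\mathcal{F}(\phi_k,g_0) \to \lambda^N$. Passing to $u_k := e^{-\phi_k/2}$ recasts $\mathcal{F}$ as the Rayleigh quotient $\int(4|\nabla u|^2 + \R\,u^2)\,dv$ on the $L^2$-unit sphere, and elliptic compactness for its minimizing sequences yields, up to extraction, strong $H^1$-convergence to a minimizer of $\mathcal{F}^N$; this is what makes the derivative inequality asymptotically sharp as $\tau_1 \to \infty$.

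The genuine obstacle is the sharpness step: the derivative bound is a one-shot convexity-style manipulation, but closing the loop from $\mu(g_0,\tau_k) = \tau_k\lambda^N + O(\log\tau_k)$ to $\mathcal{F}(\phi_k) \to \lambda^N$ requires a two-sided control of $\mathcal{N}(\phi_k)$, whose lower half relies on the log-Sobolev inequality on $N$; the final upgrade from weak $L^2$-compactness of $(u_k)$ to the stated strong $H^1$-convergence is then the standard direct-method argument for the Rayleigh quotient.
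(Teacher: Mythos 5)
Your proposal follows essentially the same approach as the paper's appendix. The derivative bound is proved identically: test the infimum at one value of $\tau$ against a (near-)minimizer from the other value, exploit the linearity of $\tau\mapsto\mathcal{W}(\phi,g_0,\tau)$ for fixed $\phi$ to reduce to $\mathcal{F}(\phi)\geqslant\lambda^N$, and pass to the limit — the paper writes this out for $\tau_1>\tau_2$ only, with the opposite approach following by relabeling, exactly as you note. Your sharpness argument likewise mirrors the paper's: test $\mu(\tau_k)$ against a fixed $\mathcal{F}$-minimizer for an upper bound, use Jensen together with a log-Sobolev inequality on the compact link (there called a constant $\tau_0$ with $-\int u^2\log u^2\geqslant-4\tau_0\int|\nabla u|^2$) to bound the Nash entropy of the minimizing sequence, conclude $\mathcal{F}(\phi_k)\to\lambda^N$, and invoke elliptic compactness for the $H^1$ convergence. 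The one cosmetic simplification you offer is deducing upper semicontinuity directly from the fact that $\mu$ is an infimum of functions continuous in $\tau$, rather than extracting it from the difference inequality; since that inequality is needed anyway for the derivative bound, the two routes are effectively the same.
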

\begin{rem}
These estimates are sharp and actually attained for all $\tau$ larger than the extinction time for positively curved Einstein manifolds (see appendix B.1) :

\end{rem}

As a direct corollary, we get a control of the asymptotic behavior of the entropy as $\tau$ tends to infinity. 

\begin{cor}\label{borne mu}
For all $N$ closed Riemannian manifold of dimension $n$, there exists $A\in \mathbb{R}$ such that :
\begin{align}
\mu^N(\tau,g^N) \geqslant \tau\lambda^N-A-\frac{n}{2}\log_+(\tau)\label{largetau}
\end{align}
where $\log_+$ is the positive part of the $\log$.

Moreover, if there is a $T_N<+\infty$ (in particular if $\lambda^N > 0$) such that $\mu^N(T_N)=\nu^N$, the we have the sharper control :

$\forall \tau\geqslant T_N$,
\begin{align*}
\mu^N(\tau,g^N) \geqslant \mu^N(T_N)+(\tau-T_N)\lambda^N-\frac{n}{2}\log\left(\frac{\tau}{T_N}\right).
\end{align*}
\begin{rem}
Here, there is equality if and only if $N$ is a positively curved Einstein manifold.
\end{rem}
\end{cor}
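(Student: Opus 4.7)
The plan is to integrate the differential inequality of the preceding lemma by reframing it as a monotonicity statement, thereby avoiding the issue that $\mu^N(\cdot, g^N)$ is not a priori classically differentiable. I introduce the auxiliary function
$$h(\tau) := \mu^N(\tau, g^N) - \lambda^N \tau + \tfrac{n}{2}\log\tau$$
on $(0,\infty)$. The previous lemma gives that $\tau \mapsto \mu^N(\tau, g^N)$ is upper semicontinuous and that its (lower left) Dini derivative at every $\tau_1 > 0$ is bounded below by $\lambda^N - \frac{n}{2\tau_1}$, which says precisely that the lower left Dini derivative of $h$ is everywhere non-negative. A classical real-analysis lemma (an upper semicontinuous function on an interval whose lower left Dini derivative is everywhere $\geq 0$ is non-decreasing) then yields that $h$ is non-decreasing on $(0,\infty)$. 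The main obstacle is really this step: the differential inequality only holds in a one-sided, non-classical sense, and one has to justify that it integrates without appealing to absolute continuity or Lebesgue differentiation.

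Given monotonicity of $h$, the sharper bound is immediate: for $\tau \geq T_N$, $h(\tau) \geq h(T_N)$ rearranges precisely to
$$\mu^N(\tau, g^N) \geq \mu^N(T_N, g^N) + \lambda^N(\tau - T_N) - \tfrac{n}{2}\log(\tau/T_N).$$

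For the general bound (\ref{largetau}) I split the statement into two regimes. For $\tau \geq 1$, monotonicity of $h$ from $\tau = 1$ gives $\mu^N(\tau, g^N) \geq \lambda^N \tau - (\lambda^N - \mu^N(1, g^N)) - \tfrac{n}{2}\log\tau$, which is of the claimed form since $\log_+(\tau) = \log\tau$ in this range. For $\tau \in (0,1]$, one has $\log_+(\tau) = 0$, and it suffices to bound $\mu^N(\tau, g^N) - \tau\lambda^N$ uniformly from below. This follows from the upper semicontinuity of $\mu^N$ on $(0,1]$ together with the classical fact that $\mu^N(\tau, g^N) \to 0$ as $\tau \to 0^+$ (the Euclidean Gross logarithmic Sobolev inequality being recovered at small scales), which gives a finite infimum on the compact interval; enlarging $A$ then absorbs this range and yields the claim for all $\tau > 0$. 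The equality case in the remark can be read off directly: equality in the integrated inequality forces equality in the Dini-derivative bound for almost every $\tau$, which by the sharpness statement of the previous lemma forces the minimizer of $\mathcal{W}^N(\cdot,g^N,\tau)$ to coincide with a minimizer of $\mathcal{F}^N$, a rigidity known to single out positively curved Einstein manifolds.
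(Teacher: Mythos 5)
The core idea — rewrite the derivative inequality as the statement that $h(\tau) := \mu^N(\tau,g^N) - \lambda^N\tau + \frac{n}{2}\log\tau$ is non-decreasing, then read off both bounds — is exactly the paper's idea, and your arithmetic rearranging $h(\tau)\geq h(T_N)$ and $h(\tau)\geq h(1)$ is fine. However, the step you flag as "the main obstacle" is where your argument actually breaks. The "classical lemma" you invoke (an upper semicontinuous function whose lower \emph{left} Dini derivative is everywhere $\geq 0$ is non-decreasing) is false. Take $f(x)=x$ for $x\neq 1$ and $f(1)=2$: this is upper semicontinuous, its lower left Dini derivative equals $1$ at every $x\neq 1$ and $+\infty$ at $x=1$, yet $f(1)>f(1.1)$. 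Upper semicontinuity permits exactly the kind of isolated upward jump that destroys monotonicity here; a usable version of this Dini-integration lemma requires a different pairing of Dini derivative and one-sided semicontinuity than the one you wrote down. There is no need for any such lemma: the proof of the preceding lemma (Appendix A.2/A.1) never actually passes to a limit — for every $\tau_1>\tau_2$ it plugs a near-minimizer $\phi_1$ at scale $\tau_1$ into $\mathcal{W}(\cdot,g_0,\tau_2)$ and cancels the Nash-entropy terms, yielding directly
\begin{align*}
\mu^N(\tau_1,g^N)\;\geqslant\;\mu^N(\tau_2,g^N)+(\tau_1-\tau_2)\lambda^N-\frac{n}{2}\log\frac{\tau_1}{\tau_2},\qquad \tau_1>\tau_2,
\end{align*}
which \emph{is} the statement $h(\tau_1)\geqslant h(\tau_2)$. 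The corollary then really is direct, as the paper says, with no differentiation or re-integration step.

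A second, smaller gap: for the regime $\tau\in(0,1]$ you claim that upper semicontinuity of $\mu^N$ together with $\mu^N(\tau)\to 0$ as $\tau\to 0^+$ gives a finite infimum on the compact interval. Upper semicontinuity controls $\mu^N$ from above, not from below, and usc functions on a compact set need not be bounded below. To get the uniform lower bound on $(0,1]$ one should invoke the (standard, but separate) fact that $\mu^N(\cdot,g^N)$ is finite and continuous on $(0,\infty)$ for a closed manifold, and extends continuously to $0$ at $\tau=0$; then the bound on $[0,1]$ is compactness of the interval, not semicontinuity. With these two repairs your proof coincides with the paper's.
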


\subsection{$\mu^{C(N)} = -\infty$ if and only if $\lambda^N \leqslant (n-1)$}
In this section, still interested in the behavior of the $\mu$-functional of a cone, we realize that in some cases the entropy of a cone is $-\infty$. We are going to give a characterization of compact $n$-manifolds that generate cones of infinite $\mu$-functional.
\\

The $\mu$-functional being very negative is usually associated to the \textit{collapsedness} of some region of the manifold (because we usually assume lower bounds on the scalar curvature). The best example probably being the proof of the Perelman's local noncollapsedness theorem along the Ricci flow (see \cite{KleinLott}, section 13). 
\\

 But here, if the cone is not flat, the curvature blows up close to the tip, and the condition implying a $\mu$-functional being infinite is actually linked to a negative enough curvature on the link and not to the collapsnedness of some region.

 This is characterized by the value of Perelman's $\lambda$ on the link. Namely, we are going to prove :
\begin{thm}
We have :
\begin{itemize}
\item For the $\mu$ and $\nu$ functionals :
$$\forall \tau>0 \text{,  }\;\mu^{C(N)}(\tau)\; = \;\nu^{C(N)}\; = \;-\infty,$$
if and only if :
$$\lambda^N \leqslant (n-1).$$
\item For the $\lambda$-functional :

$$\lambda^{C(N)} = -\infty,$$
if and only if
$$\lambda^N<(n-1).$$
Moreover, if $\lambda^N \geqslant (n-1) $, then $\;\lambda^{C(N)} = 0$.
\begin{rem}
Note that the fact that the two possible values for $\lambda^N$ are $0$ and $-\infty$ is not surprising since the cone is scaling invariant while the $\mathcal{F}$-functional is not.
\\

(Here, when $(n-1)\leqslant \lambda^N < n(n-1)$, we have $\R_{min} = -\infty\;$ while $\lambda^N = 0$).
\end{rem}
\end{itemize}

\end{thm}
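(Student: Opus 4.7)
The plan is to reduce both parts of the theorem to one-dimensional problems on $\mathbb{R}^+$ via separation of variables along the link, with the threshold $\lambda^N = n-1$ emerging as the critical Hardy constant on the half-line: $\int_0^\infty (H')^2\,dr \geq \tfrac14 \int_0^\infty H^2/r^2\,dr$. The key algebraic step throughout is the substitution $H(r) = r^{n/2} h(r)$, which, after one integration by parts against the factor $1/r$, converts $\int_0^\infty (h')^2 r^n\,dr$ into $\int_0^\infty (H')^2\,dr + \frac{n(n-2)}{4}\int_0^\infty H^2/r^2\,dr$.

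For the $\lambda^{C(N)}$ part, scale invariance of the cone together with the scaling $\lambda(\alpha^2 g) = \alpha^{-2}\lambda(g)$ forces $\lambda^{C(N)} \in \{-\infty, 0, +\infty\}$; the $+\infty$ alternative is ruled out by any compactly supported smooth test function with finite $\mathcal{F}$. For product test functions $u(r,x) = h(r) w(x)$ with $\int_N w^2\,dv_N = 1$ and $\int_0^\infty h^2 r^n\,dr = 1$, the substitution above yields
\[
\mathcal{F}^{C(N)}(u) = 4\int (H')^2\,dr + (\mathcal{F}^N(w) - n)\int H^2/r^2\,dr \geq (\mathcal{F}^N(w) - (n-1))\int H^2/r^2\,dr
\]
by Hardy's inequality. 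Decomposing a general $u$ into $L^2(N)$-eigenfunctions of $-4\Delta^N + \R^N$ (with eigenvalues $\geq \lambda^N$) and summing mode by mode yields $\lambda^{C(N)} \geq 0$ when $\lambda^N \geq n-1$. Conversely, when $\lambda^N < n-1$, take $w$ with $\mathcal{F}^N(w) \leq \lambda^N + \epsilon < n-1$ and $H_R(r) = c_R\,r^{1/2}\chi_{[1/R,1]}$ (which saturates Hardy exactly); then $\mathcal{F}^{C(N)}(u) = (\mathcal{F}^N(w) - (n-1))\int H_R^2/r^2\,dr \to -\infty$ as $R \to \infty$.

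For the direction $\lambda^N \leq n-1 \Rightarrow \mu^{C(N)} = -\infty$ the same product ansatz produces an additional entropy contribution $-\int u^2 \log u^2\,dv$, but the geometric part is unchanged. When $\lambda^N < n-1$ the geometric part alone sends $\mathcal{W}^{C(N)}$ to $-\infty$, since the log contributions remain bounded for the test function above. The delicate critical case $\lambda^N = n-1$ is handled by choosing $w$ close to a minimizer of $\mathcal{F}^N$ and $H$ a Hardy extremizer so that the geometric part vanishes exactly; the parabolic rescaling $u_\alpha(r,x) = \alpha^{(n+1)/2} u(\alpha r, x)$ then leaves the geometric part at zero by self-similarity of the cone, while shifting the entropy by $-(n+1)\log \alpha$, so $\mathcal{W}^{C(N)}(u_\alpha,\tau) \to -\infty$ as $\alpha \to \infty$.

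The main obstacle is the converse $\lambda^N > n-1 \Rightarrow \mu^{C(N)} > -\infty$. The plan is to apply the separation of variables formula~\eqref{separation}, bound $\mathcal{W}^N(\tilde f, g^N, \tau/r^2) \geq \mu^N(\tau/r^2)$, and invoke Corollary~\ref{borne mu}: $\mu^N(s) \geq s\lambda^N - A - \tfrac n2\log_+(s)$. Writing $u = r^{-n/2} U$, setting $p(r) = \int_N U(r,\cdot)^2\,dv_N$ and $P = \sqrt p$, and using Hardy's inequality $4\int (P')^2\,dr \geq \int P^2/r^2\,dr$, this produces a bound of the form
\[
\mathcal{W}^{C(N)}(f,\tau) \geq \tau(\lambda^N - (n-1))\int p/r^2\,dr - \tfrac n2\int p \log_+(\tau/r^2)\,dr - \int p \log p\,dr + C(n,\tau).
\]
Here the strict inequality $\lambda^N > n-1$ makes the leading coefficient positive; the elementary $\log_+ s \leq \epsilon s + C_\epsilon$ absorbs the $\log_+$ term into a small fraction of the positive $\int p/r^2\,dr$ term, and the residual 1D entropy $-\int p \log p\,dr$ is controlled by the Hardy margin $4\tau\int (P')^2 - \tau\int p/r^2$ via a 1D log-Sobolev inequality. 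The technical heart of the argument is this combined 1D estimate, for which the strict inequality $\lambda^N > n-1$ is essential to leave enough room for both absorptions.
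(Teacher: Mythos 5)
Your approach tracks the paper's for most parts, but one step has a genuine gap.

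\paragraph{What works.} Your eigenfunction decomposition for the $\lambda^{C(N)}$ part is a clean alternative to the paper's argument (which reuses the $\mathcal{W}$-machinery): decomposing $u$ along eigenfunctions of $-4\Delta^N + \R^N$ with eigenvalues $\lambda_k \geqslant \lambda^N$ and applying the substitution $H = r^{n/2}h$ plus the half-line Hardy inequality mode by mode gives $\mathcal{F}^{C(N)}(u) \geqslant \sum_k(\lambda_k - (n-1))\int H_k^2/r^2\,dr$, which is nonnegative when $\lambda^N \geqslant n-1$. Your treatment of $\lambda^N < n-1 \Rightarrow \mu^{C(N)} = -\infty$ via truncated Hardy quasi-extremizers is essentially the paper's test-function construction $v = br^{-a}\tilde u\chi$ in disguise (one small caveat: $c_R r^{1/2}\chi_{[1/R,1]}$ should use a smooth cutoff or $\int(H_R')^2$ is infinite, and you only get $\mathcal F^{C(N)}(u) \leqslant O(1) + (\mathcal F^N(w) - (n-1))\int H_R^2/r^2$, not equality — but the divergence to $-\infty$ is correct). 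Your reduction of $\lambda^N > n-1 \Rightarrow \mu^{C(N)} > -\infty$ via the separation-of-variables formula, the bound $\mu^N(s) \geqslant s\lambda^N - A - \tfrac n2\log_+(s)$, Hardy on the radial variable, and a one-dimensional log-Sobolev to absorb the residual entropy is the same strategy as the paper's (Lemmas 2.12--2.14 and the weighted log-Sobolev/Hardy pair); your absorption $\log_+ s \leqslant \epsilon s + C_\epsilon$ replaces the paper's choice of $\tau_0$ relative to $\tau$, but plays the same role.

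\paragraph{The gap.} Your argument for the critical case $\lambda^N = n-1 \Rightarrow \mu^{C(N)} = -\infty$ does not work as described. First, there is no $L^2$ Hardy extremizer on the half-line: the inequality $\int_0^\infty (H')^2\,dr \geqslant \tfrac14\int_0^\infty H^2/r^2\,dr$ is strict and the formal extremal $H = r^{1/2}$ is not square-integrable, so you cannot ``[choose] $H$ a Hardy extremizer so that the geometric part vanishes exactly''; for every admissible $u$ in the critical case one has $\mathcal{F}^{C(N)}(u) > 0$. Second, and more fundamentally, the parabolic rescaling $u_\alpha(r,x) = \alpha^{(n+1)/2}u(\alpha r, x)$ on a cone satisfies
\[
\mathcal{W}^{C(N)}(u_\alpha, g, \tau) \;=\; \tau\alpha^2\,\mathcal{F}^{C(N)}(u) + \mathcal{N}^{C(N)}(u) - (n+1)\log\alpha - C(n,\tau) \;=\; \mathcal{W}^{C(N)}(u, g, \alpha^2\tau),
\]
so rescaling the test function is literally the same as changing $\tau$; since $\mu^{C(N)}(\tau)$ is independent of $\tau$ (the cone is scale-invariant), rescaling cannot drive the infimum to $-\infty$. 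Concretely, since $\mathcal{F}^{C(N)}(u) > 0$, the dominant term $\tau\alpha^2\mathcal{F}^{C(N)}(u)$ sends $\mathcal{W}^{C(N)}(u_\alpha,\tau) \to +\infty$ as $\alpha\to\infty$, not $-\infty$. The paper handles the critical case by a genuinely different device: a single fixed test function $v = b r^{-(n-1)/2}\tilde u\,\chi_{[0,2r_0]}$ whose exponent $a = (n-1)/2$ annihilates the curvature term $4a^2 + K = 0$ exactly, with the divergence claimed to come from the Nash entropy of that particular $v$ rather than from any scaling limit. A correct treatment of the borderline case thus requires a fixed, non-self-similar test function with a divergent entropy contribution; the scale-invariance of the cone rules out getting it for free from rescaling.
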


\indent This means that the links that have cones of finite $\mu$ correspond to quite positively curved manifolds, as $\lambda^N > (n-1)$ implies in particular that the average scalar curvature is stricly larger than $(n-1)$. 
\\
\indent This is a strong condition that limits the possible topologies of the manifold. in particular, in dimension $2$, it must be diffeomorphic to a sphere, in dimension $3$, it must be diffeomorphic to $\mathbb{S}^3$ or $\mathbb{S}^2\times \mathbb{S}^1$.

\begin{rem}
\indent At the same time, it is not a very strong condition compared to a lower bound on the scalar curvature. As the theorem in particular implies that there are cones with $\R_{min}= -\infty$ and $\lambda = 0$.
\\
\\
\indent This condition is implied by $\Ric^N\geqslant \frac{n-1}{n}g^N$, which is the kind of condition that implies dimension free log-Sobolev inequalities on $n$-dimensional manifolds (if $\Ric\geqslant K>0$, then there is a log-Sobolev inequality with a constant $\left[K\frac{n}{n-1}\right]$).
\\
\indent It is not surprising that the condition depends on the $\lambda$-functional of the link. 
\\
\indent Morally, the geometry of the cone far away from the tip gets mild (locally nearly flat), and this controls the functionals. So the degenerate behavior should come from the region around the tip of the cone.
\\
\indent The regions close to the tip (for $r$ small) correspond to large values of $\frac{\tau}{r^2}$ in the $\mathcal{W}^N$ term of the separation of variables expression, which means that we are looking at the link for large $\tau$, and we have just seen that the behavior of the $\mu$-functional at large $\tau$ is ruled by the $\lambda$-functional value.
\\
\end{rem}

\textbf{For spheres}, the condition is :
$$\forall \tau,\mu^{C(\beta\mathbb{S}^n)}(\tau) = \nu^{C(\beta\mathbb{S}^n)} = -\infty,$$
if and only if :
$$\beta \geqslant \sqrt{n},$$
and we see that these cones are actually the least collapsed among the cones over spheres, and we can see that the quantity that makes the entropy infinite is mostly the low scalar curvature.

\subsubsection{If $\lambda^N\leqslant (n-1)$, then $\nu^{C(N)} = -\infty$}
Let us start the proof by the implication involving the least intermediate results.
\begin{prop}
We have the first implications :
\begin{itemize}
\item If $\lambda^N \leqslant (n-1)$, then $\nu^{C(N)} = -\infty$.
\item If $\lambda^N < (n-1)$, then $\lambda^{C(N)}=-\infty$.
\end{itemize}
\end{prop}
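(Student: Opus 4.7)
The plan is to build, in each case, a sequence of separable test functions on the cone driving the corresponding functional to $-\infty$. The natural ansatz is $\phi(r,x) = \psi(x) + h(r)$, where $\psi$ nearly attains $\lambda^N$ on the link and $h$ is a radial profile concentrated near the tip $r=0$, where $\R^{C(N)} = (\R^N - n(n-1))/r^2$ is most negative; plugging this into the definitions of $\lambda^{C(N)}$ and $\mu^{C(N)}(\tau)$ yields the required upper bound once concentration is controlled.

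Explicitly, I would fix $\psi$ with $\int_N e^{-\psi}\, dv^N = 1$ and $\mathcal{F}^N(\psi) \leqslant \lambda^N + \varepsilon$, then change variables to $v(r) = r^{n/2} e^{-h(r)/2}$, so that the cone normalization $\int_{C(N)} e^{-\phi}\, dv^{C(N)} = 1$ becomes $\int_0^\infty v^2\, dr = 1$. Combining the basic formula for $\mathcal{F}^{C(N)}$ with the cone identities for $\R$, $|\nabla \phi|^2$, and $dv^{C(N)}$, and performing one integration by parts (the boundary terms vanish for $v$ smooth with compact support in $(0,\infty)$), the cone integral collapses to the one-dimensional identity
$$
\mathcal{F}^{C(N)}(\phi) \;=\; 4 \int_0^\infty (v')^2\, dr \;+\; \bigl(\mathcal{F}^N(\psi) - n\bigr) \int_0^\infty \frac{v^2}{r^2}\, dr.
$$

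I would then pair this with the optimal Hardy inequality $4\int_0^\infty (v')^2\, dr \geqslant \int_0^\infty v^2/r^2\, dr$ on the half-line, whose constant $1$ is sharp but not attained; an almost-saturating sequence $v_k(r) = r^{1/2}\chi_k(r)$ with logarithmic cutoffs at $0$ and $\infty$ sends the Rayleigh ratio to $1$. Under the scaling $v_{k,\rho}(r) = \rho^{-1/2} v_k(r/\rho)$, both integrals in the identity scale as $\rho^{-2}$; as soon as $\lambda^N < n - 1$, I can pick $\varepsilon$ and the Hardy-defect small enough to make the effective coefficient $\mathcal{F}^N(\psi) - (n-1) + O(\delta_k)$ strictly negative, and letting $\rho \to 0^+$ then sends $\mathcal{F}^{C(N)}(\phi_\rho) \to -\infty$, proving $\lambda^{C(N)} = -\infty$. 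The same family also satisfies $\mathcal{N}^{C(N)}(\phi_\rho) = \mathcal{N}^N(\psi) + (n+1)\log \rho + O(1)$, hence also sends $\mathcal{W}^{C(N)}(\phi_\rho, g, \tau) \to -\infty$, giving $\nu^{C(N)} = -\infty$ in this strict regime.

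The main obstacle is the borderline case $\lambda^N = n - 1$ for the $\nu$ statement, where the bracket $B_k := 4 \int (v_k')^2 + (\mathcal{F}^N(\psi) - n) \int v_k^2/r^2$ can only be made arbitrarily small positive, never negative. The remedy is not to send $\rho \to 0$ with $v_k$ fixed but to optimise in $\rho$: inserting the family into the $\mathcal{W}$ formula yields
$$
\mathcal{W}^{C(N)}(\phi_\rho, g, \tau) \;=\; \tau\, \rho^{-2} B_k \;+\; (n+1) \log \rho \;+\; O_k(1),
$$
whose minimum in $\rho > 0$ is of order $\tfrac{n+1}{2} \log B_k + O_k(1)$, tending to $-\infty$ as $B_k \to 0^+$ provided the $k$-dependent error is controlled (which one checks from the explicit form of $v_k$ and $\psi$). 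This extra $(n+1)\log\rho$, coming from the Nash entropy term $\mathcal{N}$ and absent in $\mathcal{F}$, is precisely what buys the boundary and explains why the threshold for $\nu$ is $\lambda^N \leqslant n-1$ while for $\lambda$ it is the strict $\lambda^N < n-1$.
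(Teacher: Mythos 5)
Your separation--of--variables identity
$\mathcal{F}^{C(N)}(\phi) = 4\int_0^\infty (v')^2\,dr + \bigl(\mathcal{F}^N(\psi)-n\bigr)\int_0^\infty v^2/r^2\,dr$
is correct (the shift from $n(n-1)$ to $n$ comes out of the single integration by parts), and your treatment of the \emph{strict} case is sound and genuinely different in route from the paper's. The paper substitutes pure power profiles $v=br^{-a}\tilde u\chi$ with $a\in[\tfrac{n-1}{2},\tfrac{n+1}{2})$, $a<\tfrac{\sqrt{-K}}{2}$, and reads the divergence off the exponent; you instead let the sharp Hardy constant on $\mathbb{R}^+$ do the work, which makes the threshold $n-1$ transparent: Hardy converts the coefficient $\mathcal{F}^N(\psi)-n$ into the effective coefficient $\mathcal{F}^N(\psi)-(n-1)$. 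This already gives both $\lambda^{C(N)}=-\infty$ and $\nu^{C(N)}=-\infty$ for $\lambda^N<n-1$; you do not even need the $\rho$-dilation there, since letting the log-cutoff widen already sends $B_k\to-\infty$ while the radial Nash entropy stays $O(1)$ for a profile supported in a fixed interval.

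The borderline case $\lambda^N=n-1$ is where there is a real gap. You minimize $\tau\rho^{-2}B_k+(n+1)\log\rho+O_k(1)$ over $\rho$ to get $\approx\tfrac{n+1}{2}\log B_k+O_k(1)$ and assert this diverges to $-\infty$ as $B_k\to 0^+$, ``provided the $k$-dependent error is controlled, which one checks.'' That check does not go through: the $O_k(1)$ term is $\mathcal{N}^N(\psi)+\int h_k v_k^2\,dr$, and with $\int v_k^2=1$ fixed, forcing the Hardy defect $B_k$ to $0$ requires the profile $v_k\approx\sqrt{c_k}\,r^{1/2}\chi_k$ to spread over a logarithmic scale $L_k\to\infty$; then $\int h_k v_k^2$ grows like $(n+1)L_k$ while $\log B_k\sim -2L_k$, so the leading terms cancel exactly and $\tfrac{n+1}{2}\log B_k+\int h_k v_k^2=O(\log L_k)$ does not tend to $-\infty$ (for $n\geqslant 2$ it in fact tends to $+\infty$ in the natural parametrization). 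The $\rho$-optimization cannot rescue this: dilation is a one-parameter symmetry already present in the choice of profile, so it furnishes no new freedom. A family that does work --- e.g.\ $v_s^2\propto r(-\log r)^{-s}\chi_{[0,r_0]}$, $r_0<1/e$, $s\to\infty$ --- keeps the Hardy defect \emph{bounded} but makes the radial Nash entropy itself diverge, via the near-cancellation of $-\log c_s$ against $s\langle\log(-\log r)\rangle$ leaving a surviving $-\log s$; that is a different mechanism from the one you describe. (For what it is worth, the paper's own displayed computation for this equality case is also broken: with $a=\tfrac{n-1}{2}$ the Nash exponent is $n-2a=1$, not $-1$, and $\int_0^{r_0}r\log r\,dr$ is finite; so the statement needs a genuine argument, and the one you sketch is not it.)
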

\begin{proof}
Let $N$ be a $n$-dimensional closed manifold and $\tau>0$.
\\
\indent Let us define :
$$K :=\lambda^N - n(n-1) =\inf_{\int_N u^2 = 1}\int_N \left[(\R^N-n(n-1))u^2+4|\nabla u|^2\right]dv.$$
\indent We want to prove that if $K$ is lower than $-(n-1)^2$ (equivalent to $\lambda^N\leqslant (n-1)$), then there is a function $v^2 = \frac{e^{-f}}{(4\pi\tau)^{\frac{n+1}{2}}}$ such that $\mathcal{W}(f,g,\tau)$ is $-\infty$.
\\

 It is enough to find a function in $H^1$ for which it is infinite.

 Let us construct such a function.
\subparagraph{1) Let us consider $v = \left(\frac{b}{r^a}\tilde{u}\right) \chi_{[0,2r_0]}$,} where $\tilde{u}$ is a minimizer of Perelman's $\mathcal{F}$ functional, that is :
$$\int_N \left[\left(\R^N-n(n-1)\right)\tilde{u}^2+4|\nabla \tilde{u}|^2\right]dv = K,$$
and $$\int_N \tilde{u}^2 dv =1,$$

where $\chi_{[0,r_0]}$ is a cutoff function equal to $1$ in $[0,r_0]$ and with support in $[0, 2r_0]$ and a first derivative of order $\frac{1}{r_0}$.

and for some real numbers $a$, $b>0$ and $r_0>0$ such that $\int_{C(N)}v^2 = 1$. 

 The goal is to prove that we can choose these numbers to get $\mathcal{W}(f,g^{C(N)},\tau) = -\infty$.
\\

Let us first note that the part of the function that corresponds to the values in $[r_0,2r_0]$ is of finite influence, so it will be enough to look at the functional on $[0,r_0]$.

On $[r_0,2r_0]$, the derivative is bounded by $$\left|\partial_r \left(\frac{b}{r^a} \chi_{[0,2r_0]}\right)\right|\leqslant\frac{ab}{r_0^{a+1}} + \frac{b}{r_0^{a+1}}\mathcal{O}(1).$$

The Nash entropy term is also bounded on this domain because the function is bounded. We can therefore restrict our attention to the interval $[0,r_0]$.
\\

 By plugging this expression in the $\mathcal{W}$-functional, we get :
$$\mathcal{W}^{C(N)}(f,g,\tau) = \int_0^{r_0} \left[\tau \left(b^2r^{-2a-2+n}(4a^2+K)\right)+\left(2b^2r^{-2a+n}a\log r\right)\right]dr -\frac{(n+1)}{2}\log(4\pi\tau)+(n+1).$$
\paragraph{a. Making the first term negative}
:
\\
For the first term $\tau \left(b^2r^{-2a-2+n}(4a^2+K)\right)$ (that we want to make much bigger than the others around $0$) to be negative, we want $4a^2+K<0$ that is :
\begin{align}
a<\frac{\sqrt{-K}}{2}.\label{cond1}
\end{align}
\paragraph{b. Making the remaining terms finite while the first one is infinite}
:
\\
Now, to get an arbitrarily negative entropy, we would like to have $r^{n-2a}\log r$ integrable while $r^{n-2a-2}$ is not.
\begin{rem}
Note that this will also imply that our function is indeed in $L^2$.
\end{rem}
We want $n-2a>1$ while $n-2a-2\leqslant 1$.
\\

 This way, the first term produces an infinite negative quantity while the second is finite.
\\
We want $a$ to satisfy :
\begin{align}
\frac{n-1}{2}\leqslant a < \frac{n+1}{2}.\label{cond2}
\end{align}
\paragraph{c. Values for which \eqref{cond1} and \eqref{cond2} are consistent}
:
\\
These two conditions are consistent for some $a$ ($<\frac{n+1}{2}$) if and only if :
$$\frac{n-1}{2}<\frac{\sqrt{-K}}{2},$$ that is :
$$K< -(n-1)^2.$$

In conclusion, if 
$$\lambda^N< n(n-1)-(n-1)^2 =(n-1),$$ 
then $$\mu^{C(N)}=-\infty.$$
\begin{rem}
Here we have actually made $\mathcal{F}^{C(N)}$ arbitrarily negative (while keeping $\mathcal{N}^{C(N)}$ bounded). Hence the second part of the proposition.
\end{rem}
\paragraph{2) Remains the equality case $\lambda^N = (n-1)$, that is $K = -(n-1)^2$}
:
\\
\indent In this case, let us choose $a = \frac{\sqrt{-K}}{2} = \frac{n-1}{2}$. The first term vanishes and we are left with :
$$\int_0^{r_0} 2b^2r^{-2a+n}a\log r dr - C(n,\tau)= (n-1)b^2\int_0^{r_0} r^{-1}\log r dr - C(n,\tau) = -\infty,$$ because $\frac{\log r}{r}$ is not integrable and negative around zero.
\\

If $\lambda^N\leqslant (n-1)$, then $\mu^{C(N)} = -\infty$.
\begin{rem}
Here we have actually made $\mathcal{N}^{C(N)}$ arbitrarily negative while the first term vanishes.
\end{rem}
\end{proof}

\subsubsection{If $\lambda^N>(n-1)$, then $\mu^{C(N)}>-\infty$}
Let us now present the more challenging other implication. 
\begin{prop}
We have the following other implication :

For the $\nu$-functional :
\begin{itemize}
\item If $\lambda^N>(n-1)$, then, $\nu^{C(N)}>-\infty$.
\end{itemize}

For the $\lambda$-functional :
\begin{itemize}
\item If $\lambda^N\geqslant (n-1)$, then, $\lambda^{C(N)} = 0$.
\end{itemize}
\end{prop}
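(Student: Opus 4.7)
The plan is to handle both assertions with a common two-step strategy: at each fixed radius $r$, control the angular part by the variational inequality $\int_N[4|\nabla^N u|^2 + \R^N u^2]\,dv^N \geq \lambda^N \int_N u^2\,dv^N$, and the radial part by the weighted Hardy inequality $4\int_0^\infty (\partial_r u)^2 r^n\,dr \geq (n-1)^2 \int_0^\infty u^2 r^{n-2}\,dr$ (the sharp Hardy inequality on $\mathbb{R}^{n+1}$ applied slice-by-slice on $N$). The Hardy constant $((n+1)-2)^2/4 = (n-1)^2/4$ is precisely what makes the threshold $\lambda^N = n-1$ appear.

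For the $\lambda$-functional, I would first note $\lambda^{C(N)} \leq 0$ by selecting test functions supported in annuli $\{r \in [R, 2R]\}$ with $R \to \infty$, where $\R = O(1/R^2) \to 0$ and $\int |\nabla u|^2\,dv$ can be made arbitrarily small. For the lower bound when $\lambda^N \geq n-1$, write $u = e^{-\phi/2}$ with $\int u^2\,dv = 1$, expand
\[
\mathcal{F}^{C(N)}(\phi) = \int_0^\infty\!\!\int_N\!\left[4(\partial_r u)^2 + \frac{4|\nabla^N u|^2 + (\R^N - n(n-1))u^2}{r^2}\right] r^n\,dv^N\,dr,
\]
apply the link inequality angularly and the radial Hardy, and sum:
\[
\mathcal{F}^{C(N)}(\phi) \geq \left[\lambda^N - (n-1)\right] \int_{C(N)} \frac{u^2}{r^2}\,dv \geq 0.
\]

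For the $\nu$-functional, set $\delta_0 := \lambda^N - (n-1) > 0$. The same link+Hardy argument applied to $\mathcal{W}^{C(N)}$, but reserving a fraction $\theta \in (0, \delta_0/(n-1)^2)$ of the radial gradient, gives
\[
\mathcal{W}^{C(N)}(f, g, \tau) \geq 4\theta\tau\!\int\!(\partial_r u)^2\,dv + \varepsilon\tau\!\int\!\frac{u^2}{r^2}\,dv - \int u^2\log u^2\,dv - \tfrac{n+1}{2}\log(4\pi\tau) - (n+1),
\]
with $\varepsilon := \delta_0 - \theta(n-1)^2 > 0$. The main obstacle is producing a uniform lower bound on the Nash entropy. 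I plan to decompose $u^2 = \rho(r)\, v(r, \cdot)^2$ with $\rho(r) := \int_N u^2\,dv^N$ and $\int_N v(r, \cdot)^2\,dv^N = 1$, splitting the entropy into an angular and a radial piece. The angular entropy $-\int_N v^2\log v^2\,dv^N$ is controlled by $\mathcal{W}^N(\cdot, g^N, \tau/r^2) \geq \mu^N(\tau/r^2)$, invoking the lower bound on $\mu^N$ from Corollary \ref{borne mu} (finite at every scale since $\lambda^N > 0$); this step consumes the $4\tau|\nabla^N u|^2/r^2 + \tau \R^N u^2/r^2$ portion of $\mathcal{W}^{C(N)}$ that the link inequality left on the table. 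The radial entropy $-\int \rho\log\rho \cdot r^n\,dr$ is bounded below via Perelman's log-Sobolev on $\mathbb{R}^{n+1}$ applied to the radial function $\sqrt\rho$ at scale $\theta\tau$, using $r^n\,dr$ as the radial part of Lebesgue on $\mathbb{R}^{n+1}$ and the Cauchy--Schwarz estimate $\int_0^\infty (\partial_r \sqrt\rho)^2 r^n\,dr \leq \int_{C(N)}(\partial_r u)^2\,dv$. The delicate bookkeeping of residual terms (chiefly the $\log_+$ correction from the $\mu^N$-bound and the integral $\int \rho \log(\tau/r^2) r^n\,dr$) is compensated by the saved positive term $\varepsilon\tau\int u^2/r^2\,dv$; by parabolic scale invariance of $\mathcal{W}$ and scale invariance of $C(N)$ we may fix $\tau = 1$, obtaining a finite lower bound $\mathcal{W}^{C(N)} \geq C(n, \lambda^N)$ and hence $\nu^{C(N)} > -\infty$.
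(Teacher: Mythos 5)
Your proposal is correct and follows essentially the same route as the paper's proof: your decomposition $u^2 = \rho(r)\,v(r,\cdot)^2$ is the paper's $(a_r,\tilde f)$-separation of variables, your Cauchy--Schwarz bound on $\partial_r\sqrt\rho$ is the paper's Jensen step (Lemma \ref{lower bound radial term}), the angular control via $\mu^N$ is Corollary \ref{borne mu}, the radial log-Sobolev for $\sqrt\rho$ against $r^n\,dr$ is Lemma \ref{log sobolev radial}, the sharp weighted Hardy threshold $(n-1)^2/4$ is identical, and reserving the fraction $\theta$ of the radial gradient is the paper's split of $\tau$ into $\tau_0$ and $\tau-\tau_0$. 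The one detail you should spell out is the absorption of the $\log_+(\tau/r^2)$ correction: since $\varepsilon = \delta_0 - \theta(n-1)^2$ can be arbitrarily small when $\lambda^N$ is close to $(n-1)$, you need the refinement $\log_+ x \leq \eta x + C(\eta)$ with $\eta$ chosen so that $\tfrac{n}{2}\eta \leq \varepsilon$, which makes the compensation by $\varepsilon\tau\int u^2/r^2\,dv$ work at the cost of an additive constant $C(\eta)$.
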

\begin{proof}
We will focus on the more challenging first statement and point out from which part of the proof it is possible to deduce the second part.

Let $N$ be a $n$-dimensional manifold such that $\lambda^N>(n-1)$, and consider $\tau>0$ and $f:C(N)\to \mathbb{R}$ a smooth function such that :
$$\int_{C(N)}\frac{e^{-f}}{(4\pi\tau)^{\frac{n+1}{2}}}dv^{C(N)} = 1.$$

We want to bound the $\mathcal{W}^{C(N)}(f,g,\tau)$ from below.
\\

Let us start by explaining why it is enough to bound the part of the functional corresponding to the ball of radius $C\sqrt{\tau}$ around the tip of the cone. The intuition behind this is that at larger distance, the geometry becomes milder and the $\mathcal{W}$-functional cannot take too negative values.
\begin{lem}\label{control a grande distance}
There exists $C >0$, depending on the geometry of $N$, such that :
\begin{align*}
\int_{C\sqrt{\tau}}^{+\infty}\int_N\left[\tau(|\nabla f|^2+\R)+f-(n+1)\right]\left(\frac{e^{-f}}{(4\pi\tau)^{\frac{n+1}{2}}}r^ndvdr\right) &\geqslant -1\\
&>-\infty
\end{align*}
\begin{proof}
	For any smooth manifold $(N,g)$, $ \lim_{\tau\to 0} \mu^N(\tau,g) = 0 $.
	
	In particular, there exists $\frac{1}{2n(n-1)}>\tau_0>0$, such that for all $0 \leqslant \tau\leqslant\tau_0$, 
	\\
	we have $\mu^N(\tau,g)\geqslant -\frac{1}{2}$.
	
	Now, we can use the separation of variables formula for the $\mathcal{W}$-functional on the cone without a ball of radius $\sqrt{\frac{\tau}{\tau_0}}$ (chosen to have $0<\frac{\tau}{r^2}\leqslant \tau_0$) :
	
	\begin{align*}
	\mathcal{W}^{C(N)}(f) =& \int_{\sqrt{\frac{\tau}{\tau_0}}}^\infty \left[\mathcal{W}^{N}\left(\tilde{f},g,\frac{\tau}{r^2}\right)-n(n-1)\frac{\tau}{r^2}\right]\left(\frac{e^{-a_r}}{(4\pi\tau)^{\frac{1}{2}}} dr\right) \\
	&+\int_N \int_{\sqrt{\frac{\tau}{\tau_0}}}^\infty [\tau(\partial_r (a_r+\tilde{f}))^2+a_r-1]\left(\frac{e^{-a_r}}{(4\pi\tau)^{\frac{1}{2}}} dr\right)\left(\frac{e^{-\tilde{f}}}{\left(4\pi\tau r^{-2}\right)^{\frac{n}{2}}} dv\right)\\
	\geqslant &  \int_{\sqrt{\frac{\tau}{\tau_0}}}^\infty \left[-\frac{1}{2}-n(n-1)\tau_0\right]\left(\frac{e^{-a_r}}{(4\pi\tau)^{\frac{1}{2}}} dr\right) \\
	&+\int_N \int_{\sqrt{\frac{\tau}{\tau_0}}}^\infty [\tau(\partial_r (a_r+\tilde{f}))^2+a_r-1]\left(\frac{e^{-a_r}}{(4\pi\tau)^{\frac{1}{2}}} dr\right)\left(\frac{e^{-\tilde{f}}}{\left(4\pi\tau r^{-2}\right)^{\frac{n}{2}}} dv\right)\\
	\geqslant & -1\\ 
	&+ \int_N \int_{\sqrt{\frac{\tau}{\tau_0}}}^\infty [\tau(\partial_r (a_r+\tilde{f}))^2+a_r-1]\left(\frac{e^{-a_r}}{(4\pi\tau)^{\frac{1}{2}}} dr\right)\left(\frac{e^{-\tilde{f}}}{\left(4\pi\tau r^{-2}\right)^{\frac{n}{2}}} dv\right).
	\end{align*}

The last term is non negative, as we shall see in more details in the next part of the proof, thanks to the lemma \ref{lower bound radial term} to get rid of the $\tilde{f}$ term followed by the lemma \ref{log sobolev radial} to control the resulting integrand. 

\begin{rem}	
	The lemma \ref{log sobolev radial} can only be directly used with functions such that $$\int\frac{e^{-f}}{(4\pi\tau)^{\frac{n+1}{2}}} = 1,$$ but since, for all $c\in \mathbb{R}$,
	$$\mathcal{W}(f+c,g,\tau) = e^{-c}\mathcal{W}(f,g,\tau) + ce^{-c},$$
	we can directly relate our result to other values of the integral. 
	
	In particular, if $\int\frac{e^{-f}}{(4\pi\tau)^{\frac{n+1}{2}}} \leqslant 1$ and $\mathcal{W}(f+c,g,\tau) \geqslant 0$ for a constant $c\leqslant 0$ such that $\int\frac{e^{-(f+c)}}{(4\pi\tau)^{\frac{n+1}{2}}} = 1$, then we also have $\mathcal{W}(f,g,\tau) \geqslant 0$.
	
\end{rem}
\end{proof}
\end{lem}

\indent As a consequence, it is then enough to find a lower bound for the part of the $\mathcal{W}$-functional at scale $\tau$ corresponding to the truncated cone $\left([0,C\sqrt{\tau}], dr^2+r^2g^N\right)$.
\begin{rem}
In this part of the manifold, the important quantity is the $\mathcal{W}$-functional on the link at large scales. That is the reason why the most influent quantity is $\lambda^N$. 
\end{rem}
\paragraph{0) We have the following expression of the entropy}
:
\\
By the separation of variables \eqref{separation} :
\begin{align*}
\mathcal{W}^{C(N)}(f) =& \int_0^\infty \left[\mathcal{W}^{N}\left(\tilde{f},g,\frac{\tau}{r^2}\right)-n(n-1)\frac{\tau}{r^2}\right]\left(\frac{e^{-a_r}}{(4\pi\tau)^{\frac{1}{2}}} dr\right) \\
&+\int_N \int_0^\infty [\tau(\partial_r (a_r+\tilde{f}))^2+a_r-1]\left(\frac{e^{-a_r}}{(4\pi\tau)^{\frac{1}{2}}} dr\right)\left(\frac{e^{-\tilde{f}}}{\left(4\pi\tau r^{-2}\right)^{\frac{n}{2}}} dv\right).
\end{align*}
\paragraph{1) Let us first bound the second term}
$$\int_N \int_0^\infty [\tau(\partial_r( a_r+\tilde{f}))^2+a_r-1]\left(\frac{e^{-a_r}}{(4\pi\tau)^{\frac{1}{2}}} dr\right)\left(\frac{e^{-\tilde{f}}}{\left(4\pi\tau r^{-2}\right)^{\frac{n}{2}}} dv\right)$$ by a nicer term without $\tilde{f}$ in the $\partial_r$ term :
\begin{lem}\label{lower bound radial term}
We have the following lower bound :
\begin{align*}
\int_N \int_0^\infty [\tau(\partial_r( a_r+\tilde{f}))^2+a_r-1]&\left(\frac{e^{-a_r}}{(4\pi\tau)^{\frac{1}{2}}} dr\right)\left(\frac{e^{-\tilde{f}}}{\left(4\pi\tau r^{-2}\right)^{\frac{n}{2}}} dv\right)\\
&\geqslant \mathcal{W}^{\mathbb{R}^+}\left(\left(a_r-\frac{n}{2}\log\frac{\tau}{r^2}\right),dr^2, \tau\right) +\frac{n}{2}\log\frac{\tau}{r^2}.
\end{align*}
\begin{proof}
The expression is the following :
\begin{align*}
\int_N \int_0^\infty(\partial_r (a_r+\tilde{f}))^2&\left(\frac{e^{-a_r}}{(4\pi\tau)^{\frac{1}{2}}} dr\right)\left(\frac{e^{-\tilde{f}}}{\left(4\pi\tau r^{-2}\right)^{\frac{n}{2}}} dv\right) \\
&= \int_0^\infty\left(\int_N (\partial_r (a_r+\tilde{f}))^2\left(\frac{e^{-\tilde{f}}}{\left(4\pi\tau r^{-2}\right)^{\frac{n}{2}}} dv\right)\right)\left(\frac{e^{-a_r}}{(4\pi\tau)^{\frac{1}{2}}} dr\right)\\
&\geqslant \int_0^\infty \left(\int_N \partial_r (a_r+\tilde{f})\left(\frac{e^{-\tilde{f}}}{\left(4\pi\tau r^{-2}\right)^{\frac{n}{2}}} dv\right)\right)^2 \left(\frac{e^{-a_r}}{(4\pi\tau)^{\frac{1}{2}}} dr\right) \tag*{By Jensen inequality.} \\
&\geqslant\int_0^\infty \left(\partial_ra_r+\int_N (\partial_r \tilde{f})\left(\frac{e^{-\tilde{f}}}{\left(4\pi\tau r^{-2}\right)^{\frac{n}{2}}} dv\right)\right)^2 \left(\frac{e^{-a_r}}{(4\pi\tau)^{\frac{1}{2}}} dr\right) \\
&= \int_0^\infty \left(\partial_r a_r + \int_N (\partial_r \tilde{f})(4\pi\tau r^{-2})^{-\frac{n}{2}} e^{-\tilde{f}}dv\right)^2 \left(\frac{e^{-a_r}}{(4\pi\tau)^{\frac{1}{2}}} dr\right).
\end{align*}
Let us see what we can do with the term :
$$\int_N (4\pi\tau r^{-2})^{-\frac{n}{2}} e^{-\tilde{f}}(\partial_r \tilde{f})dv.$$
\begin{align*}
\int_N (4\pi\tau r^{-2})^{-\frac{n}{2}} e^{-\tilde{f}}(\partial_r \tilde{f})dv &= (4\pi\tau)^{\frac{n}{2}}\int_N r^n e^{-\tilde{f}}(\partial_r \tilde{f})dv \\
&=-(4\pi\tau)^{-\frac{n}{2}}\int_N r^n \partial_r\left(e^{-\tilde{f}}\right)dv \\
&=-\left[(4\pi\tau)^{-\frac{n}{2}}\int_N \partial_r\left(r^ne^{-\tilde{f}}\right)dv - \frac{n}{r}\right]\tag*{Because $\int_N (4\pi\tau r^{-2})^{\frac{n}{2}} e^{-\tilde{f}} = 1$.}\\
&=-\left[(4\pi\tau)^{-\frac{n}{2}}\partial_r\left(\int_N r^ne^{-\tilde{f}}dv\right) - \frac{n}{r}\right] \\
&= \frac{n}{r}  \tag*{Because $\int_N r^ne^{-\tilde{f}}dv$ is constant.}\\
&= \partial_r \left(\frac{n}{2}\log\frac{\tau}{r^2}\right).
\end{align*}
We finally get :
\begin{align*}
\int_N \int_0^\infty (\partial_r (a_r+\tilde{f}))^2\left(\frac{e^{-a_r}}{(4\pi\tau)^{\frac{1}{2}}} dr\right)\left(\frac{e^{-\tilde{f}}}{\left(4\pi\tau r^{-2}\right)^{\frac{n}{2}}} dv\right) \geqslant \int_0^\infty\left(\partial_r \left(a_r-\frac{n}{2}\log\frac{\tau}{r^2}\right)\right)^2 \left(\frac{e^{-a_r}}{(4\pi\tau)^{\frac{1}{2}}} dr\right).
\end{align*}
Plugging this inequality in the expression of the lemma, we get :
\begin{align*}
\int_N \int_0^\infty [\tau(\partial_r( a_r+\tilde{f}))^2+a_r-1]&\left(\frac{e^{-a_r}}{(4\pi\tau)^{\frac{1}{2}}} dr\right)\left(\frac{e^{-\tilde{f}}}{\left(4\pi\tau r^{-2}\right)^{\frac{n}{2}}} dv\right)\\
&\geqslant \int_0^\infty[\tau(\partial_r(a_r+n\log r))^2+a_r-1]\left(\frac{e^{-a_r}}{(4\pi\tau)^{\frac{1}{2}}} dr\right).
\end{align*}
\end{proof}
\end{lem}
\paragraph{2) Let us now use the lemma \eqref{largetau} to get a lower bound on $\mathcal{W}^{C(N)}(f,g,\tau)$} expressed as a functional on $\mathbb{R}^+$ namely :
\begin{lem}\label{lowbdmu}
There exists $A>0$ such that :
\begin{align*}
\mathcal{W}^{N}(\tilde{f},g,\tau r^{-2})-n(n-1)\frac{\tau}{r^2}\geqslant \left( \lambda^N-n(n-1) \right)\frac{\tau}{r^2}-A-\frac{n}{2}\log_+ \left(\frac{C^2\tau}{r^2}\right)
\end{align*}
\begin{proof}
It is a direct consequence of \eqref{largetau}.
\end{proof}
\end{lem}
\paragraph{We have : $\left( \lambda^N-n(n-1) \right)>-(n-1)^2$} since by assumption, $\lambda^N>(n-1)$
\\
As a consequence of Lemma \ref{lowbdmu}, it is enough to bound from below the following simpler quantity  :
\begin{lem}
Let us note $K:=\left( \lambda^N-n(n-1)\right)>-(n-1)^2$, we have by a direct rephrasing with the new notations :
\begin{align*}
\mathcal{W}^{C(N)}(f) \geqslant&\int_0^{C\sqrt{\tau}} \left[K\frac{\tau}{r^2} +\tau\left(\partial_r \left(a_r-\frac{n}{2}\log \left(4\pi\frac{\tau}{r^2}\right)\right)\right)^2+ a_r-\frac{n}{2}\log \left(4\pi\frac{\tau}{r^2}\right)\right] \left(\frac{e^{-a_r}}{(4\pi\tau)^{\frac{1}{2}}} dr\right)\\
&-A-1-\frac{n}{2}\log\left(\frac{C^2}{4\pi}\right).
\end{align*}

\end{lem}
\indent Noting $w = \frac{e^{-a_r}}{(4\pi\tau) r^n}$, we can rewrite this as :
\begin{align*}
\mathcal{W}^{C(N)}(f,\tau)\geqslant & \int_0^{C\sqrt{\tau}} \left[\tau\left(4(\partial_r (w))^2+\frac{K}{r^2}w^2 \right)+w^2\log(w^2)\right]r^ndr-\frac{n+1}{2}\log(4\pi\tau)\\
&-(A+1)-\frac{n}{2}\log\left(\frac{C^2}{4\pi}\right).
\end{align*}
The goal of the next pages is to bound the term :
$$\int_0^{C\sqrt{\tau}} \left[\tau\left(4(\partial_r w)^2+\frac{K}{r^2}w^2 \right)+w^2\log(w^2)\right]r^ndr-\frac{n+1}{2}\log\tau,$$ from below for smooth $w$ such that 
$$\int_0^{C\sqrt{\tau}}w^2 r^ndr \leqslant 1.$$

\paragraph{3) Let us work on $[0,\infty]$ for easier notations and bound}
$$\int_0^\infty \left[\tau\left(4(\partial_r (w))^2+\frac{K}{r^2}w^2 \right)+w^2\log(w^2)\right]r^ndr-\frac{n+1}{2}\log\tau,$$
for $w$ such that : 
$$\int_0^{\infty}w^2 r^ndr = 1.$$
\begin{rem}
This is stronger than what we need to finish the proof as, given a $w$ such that : $0<\int_0^{C\sqrt{\tau}}w^2 r^ndr \leqslant 1$, we can consider $\tilde{w} = \frac{w}{\sqrt{\int_0^{C\sqrt{\tau}}w^2 r^ndr}}$ in $B(0,C\sqrt{\tau})$ and $\tilde{w} = 0$ outside.
\end{rem}
\indent The idea of this estimate is to use the $|\partial_r w|^2$ term to first control the $u^2\log u^2$ term by a weighted log-Sobolev inequality and then use the remaining $|\partial_r w|^2$ term to use a weighted Hardy inequality.

\subparagraph{a. The $w^2\log w^2$ term - weighted log-Sobolev inequality}
  :

 The sharp weighted log-Sobolev inequality on $\mathbb{R}^+$ we use comes directly from the Euclidean case inequality with  radial functions :
\begin{lem}\label{log sobolev radial}
 We have the following log-Sobolev inequality whose sharpness comes from the Euclidean case of the unit sphere :

 For all $\tau_0>0$ :
\begin{align}
4\tau_0\int_0^\infty r^n |w'|^2dr \geqslant\int_0^\infty r^n w^2\log(w^2) dr+\frac{n+1}{2}\log(4\pi\tau_0)+(n+1)-\log(vol(\mathbb{S}^n)), \label{logsob}
\end{align}
for $\int_0^\infty r^n w^2 = 1$.
\begin{proof}
The idea of the proof is to apply the classical log-Sobolev inequality on the Euclidean space to radial functions.
\\
\indent By the classical log-Sobolev inequality, the $\nu$-functional of the Euclidean space $\mathbb{R}^{n+1}$ is 0.
\\
\\
\indent By considering a radial function $v: \mathbb{R}^n\to \mathbb{R}$,

 $$v(r,.)=\frac{w(r)}{\sqrt{vol(\mathbb{S}^n)}}$$ such that $\int_{\mathbb{R}^{n+1}} v^2 =1$, that is : $\int_{\mathbb{R}^+}r^n w(r)^2 = 1$ and rewriting the fact that :
$$\mathcal{W}^{\mathbb{R}^{n+1}}(f,\tau,g_{eucl})\geqslant 0,$$
where $f$ is such that : $v^2 = \frac{e^{-f}}{(4\pi\tau)^{\frac{n+1}{2}}}$.
\\
\indent We get :
\\
\indent For all $w: \mathbb{R}^+\to \mathbb{R}^+$ such that $\int_{\mathbb{R}^+} r^nw^2 = 1$, we have :
$$\int_{\mathbb{R}^+} r^n [4\tau(\partial_r w)^2-w^2\log w^2]dr-\frac{n+1}{2}\log(4\pi\tau) -(n+1)+\log(vol(\mathbb{S}^n))\geqslant 0.$$
\end{proof}
\end{lem}
\indent This gives a sharp inequality \textit{for all $\tau_0$}. 
\\
\indent Let us choose one $\tau_0$ to define later with which we will use this inequality. 
\begin{rem}
$\tau_0$ will be chosen as the optimal constant to bound the remaining term thanks to a weighted Hardy inequality.
\end{rem}
\subparagraph{b. The $\frac{K}{r^2}w^2$ term - Hardy inequality}
:
\\
\indent Now, let us see what freedom we have on $\tau_0$ to apply the weighted Hardy inequality to the remaining term : 
Let us see until which value of $\tau\geqslant\tau_0>0$, the functional
\begin{align}
\int_{\mathbb{R}^+}r^n\int_N\left(4(\tau-\tau_0)|\partial_r w|^2+\tau\frac{K}{r^2}w^2\right)dvdr. \label{postlogsob}
\end{align}
 is nonnegative, for all smooth $w$.

\begin{rem}

If $K \geqslant 0$, then it is always the case, and we will take $\tau_0 = \tau$.
\\

 Let us assume that $K<0$.
\end{rem}
From \cite{hardy}, we have the following weighted Hardy inequality on $\mathbb{R}^+$ :
\begin{thm}
\indent For all $v$ such that : 
$$0<\int_0^{+\infty}r^{n-2}v^2<+\infty.$$
\indent We have the following weighted Hardy inequality :
\begin{align}
\int_0^{+\infty}r^{n-2}v^2dr<\frac{4}{(n-1)^2}\int_0^{+\infty}r^n(v')^2dr.\label{hardy}
\end{align}
\end{thm}
We have assumed that $\lambda^N>(n-1)$ which implies : $K >-(n-1)^2$, so we have the following lemma as a consequence of the weighted Hardy inequality that gives a condition on the combination $(\tau,\tau_0)$ that we will use in the following.
\begin{lem}
\indent If the pair $(\tau,\tau_0)$ satisfies : $\frac{4(\tau-\tau_0)}{-\tau K} \geqslant \frac{4}{(n-1)^2}$ that is : 
\begin{align}
\tau\geqslant\frac{\tau_0}{\left(1-\frac{-K}{(n-1)^2}\right)}.\label{condtautau0}
\end{align}
\indent We have, by \eqref{hardy} :
\begin{align*}
\int_{\mathbb{R}^+}r^n\int_N\left(4(\tau-\tau_0)|\partial_r w|^2+\tau\frac{K}{r^2}w^2\right)dvdr &> \int_{\mathbb{R}^+}r^{n-2} (\tau K+(\tau - \tau_0)(n-1)^2)w^2dr\\
&>0.
\end{align*}
\end{lem}
\subparagraph{c. Let us choose such a $(\tau,\tau_0)$ combination for the following}
:
\\
\indent Let us rewrite the quantity we want to bound from below to emphasize how we use each inequality :
\begin{align}
\int_0^\infty& r^n\int_N\left[\tau\left(4|\partial_r w|^2+\frac{K}{r^2}w^2\right)-w^2\log w^2\right]dvdr-\frac{n+1}{2}\log(\tau)-(n+1)\nonumber\\
&=\int_0^\infty r^n\int_N\left[(\tau-\tau_0)4|\partial_r w|^2+\tau\frac{K}{r^2}w^2\right]dvdr \label{har term}\\
&+\int_{\mathbb{R}^+}  r^n(4\tau_0 |\partial_r w|^2dr -w^2\log(w^2))dr-\frac{n+1}{2}\log(\tau_0)-(n+1)+\log(vol(\mathbb{S}^n))\label{logsob term}\\
&-\frac{n+1}{2}\log\left(\frac{\tau}{\tau_0}\right)-\log(vol(\mathbb{S}^n)).\label{remaining}
\end{align}
\indent Here, thanks to \eqref{hardy} and \eqref{logsob}, \eqref{har term} and
\eqref{logsob term} are nonnegative and we get :
\begin{align*}
\mathcal{W}^{C(N)}(u,\tau)&\geqslant -\frac{n+1}{2}\log\left(\frac{\tau}{\tau_0}\right)-\log(vol(\mathbb{S}^n))\\
&\geqslant \frac{n+1}{2}\log\left(1-\frac{-K}{(n-1)^2}\right)-\log(vol(\mathbb{S}^n))>-\infty,
\end{align*}
where the last step is done by choosing the the smallest $\tau$ possible at a given $\tau_0$ (or biggest $\tau_0$ at a given $\tau$) to satisfy \eqref{condtautau0}.
\\

 Since the rest of the expression of $\mathcal{W}^{C(N)}(f,g,\tau)$ is finite by the lemma \ref{control a grande distance}, we have the result :
 There exists $D = 1+A+\frac{n+1}{2}\log\left(1-\frac{-K}{(n-1)^2}\right)-\log(vol(\mathbb{S}^n))$ such that :
$$\nu^{C(N)}>-D>-\infty.$$

\end{proof}
\begin{rem}
Looking back at the proof, we can get the following lower bound on the $\nu$-functional of cones over some perturbations of the unit sphere :
\end{rem}
\begin{cor}
Let us consider a manifold $N$ such that there exists $\epsilon_1>0$, $\epsilon_2>0$ and $\epsilon_3>0$ small enough such that its $\mathcal{W}$-functional satisfies the lower bound noted $L(\epsilon_1,\epsilon_2,\epsilon_3)$ defined by :

For all $f : N\mapsto \mathbb{R}$ and $\tau > 0$
\begin{align*}
\mathcal{W}^{N}(f,\tau)\geqslant \tau(1-\epsilon_1)\mathcal{F}^{\mathbb{S}^n}\left(f+\delta,g^{\mathbb{S}^n}\right)+(1+\epsilon_2)\mathcal{N}^{\mathbb{S}^n}\left(f+\delta,g^{\mathbb{S}^n}\right)+\frac{n}{2}\log{4\pi\tau}-n-\epsilon_3,
\end{align*}
where $\delta$ is defined to ensure that $\int_{\mathbb{S}^n}\frac{e^{-f-\delta}}{(4\pi\tau)^{\frac{n}{2}}}=1$.

Then, $$\nu^{C(N)}\geqslant -\Psi(\epsilon_1,\epsilon_2,\epsilon_3),$$ 
where $\Psi>0$ and $\Psi(\epsilon_1,\epsilon_2,\epsilon_3)\to 0$ when $(\epsilon_1,\epsilon_2,\epsilon_3)\to (0,0,0)$.
\begin{rem}
The case $\epsilon_1 =\epsilon_2 = \epsilon_3 =0$ corresponds to the $\mathcal{W}$-functional of the sphere. That is why we will consider such manifolds as perturbations of the sphere from the point of view of the $\mathcal{W}$-functional.

 Let us also note that such a bound implies that $\lambda^N\geqslant (1-\epsilon_1)n(n-1)$ which is higher than $(n-1)$ for small $\epsilon_1$.
\end{rem}
\end{cor}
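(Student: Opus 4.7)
The plan is to rerun the chain of inequalities used to prove $\nu^{C(N)}>-\infty$ in the preceding proposition, but now feeding in the sharper hypothesis $L(\epsilon_1,\epsilon_2,\epsilon_3)$ and tracking how each constant depends on the $\epsilon_i$. The guiding observation is that at $(\epsilon_1,\epsilon_2,\epsilon_3)=(0,0,0)$ the hypothesis is saturated by the round sphere, for which $C(\mathbb{S}^n)=\mathbb{R}^{n+1}$ has $\nu=0$; so the task reduces to showing that each step of the previous argument depends continuously on $(\epsilon_1,\epsilon_2,\epsilon_3)$ near the origin.

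I would start by fixing $\tau>0$ and $f$ with $\int_{C(N)}(4\pi\tau)^{-(n+1)/2}e^{-f}\,dv=1$, and apply the separation of variables \eqref{separation}. I then substitute the hypothesis $L(\epsilon_1,\epsilon_2,\epsilon_3)$ into the factor $\mathcal{W}^N(\tilde f, g^N,\tau/r^2)$ appearing in the first integral. Writing $(1-\epsilon_1)\mathcal{F}^{\mathbb{S}^n}+(1+\epsilon_2)\mathcal{N}^{\mathbb{S}^n}=\mathcal{F}^{\mathbb{S}^n}+\mathcal{N}^{\mathbb{S}^n}-\epsilon_1\mathcal{F}^{\mathbb{S}^n}+\epsilon_2\mathcal{N}^{\mathbb{S}^n}$, and reassembling the pieces via the same separation of variables on $\mathbb{R}^{n+1}=C(\mathbb{S}^n)$, the resulting lower bound takes the shape
\begin{align*}
\mathcal{W}^{C(N)}(f,g,\tau)\geqslant \mathcal{W}^{\mathbb{R}^{n+1}}(f,g_{\textup{eucl}},\tau) - \mathcal{E}(f,\tau),
\end{align*}
where $\mathcal{E}$ is the integral of $\epsilon_1(\tau/r^2)\mathcal{F}^{\mathbb{S}^n}(\tilde f+\delta)-\epsilon_2\mathcal{N}^{\mathbb{S}^n}(\tilde f+\delta)+\epsilon_3$ against the radial probability density $(4\pi\tau)^{-1/2}e^{-a_r}dr$. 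Since the main term is nonnegative by the Euclidean log-Sobolev inequality (Lemma \ref{log sobolev radial}), it suffices to bound $\mathcal{E}$ from above by a quantity $\Psi(\epsilon_1,\epsilon_2,\epsilon_3)$ tending to $0$.

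The $\epsilon_3$ piece integrates trivially to $\epsilon_3$. The $\epsilon_1\mathcal{F}^{\mathbb{S}^n}$ piece is absorbed exactly as in step 2b of the preceding proposition: using $\mathcal{F}^{\mathbb{S}^n}(\tilde f+\delta)\geqslant\lambda^{\mathbb{S}^n}=n(n-1)$ and the weighted Hardy inequality \eqref{hardy}, a small fraction of the radial Dirichlet energy compensates for it, and the condition \eqref{condtautau0} on $(\tau,\tau_0)$ now becomes $\tau_0/\tau\leqslant 1/(1-\epsilon_1 n/(n-1))$, which is continuous in $\epsilon_1$ and equals $1$ at $\epsilon_1=0$.

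The main obstacle is the $-\epsilon_2\mathcal{N}^{\mathbb{S}^n}(\tilde f+\delta)$ term, because the Nash entropy on the link can be arbitrarily negative for functions concentrated near the tip. To tame it I would use that the sphere itself has a finite log-Sobolev constant, so $\mathcal{W}^{\mathbb{S}^n}(\tilde f,g^{\mathbb{S}^n},\tau/r^2)\geqslant\nu^{\mathbb{S}^n}>-\infty$, which rewrites as
\begin{align*}
\mathcal{N}^{\mathbb{S}^n}(\tilde f+\delta)\geqslant\nu^{\mathbb{S}^n} - \frac{\tau}{r^2}\mathcal{F}^{\mathbb{S}^n}(\tilde f+\delta) + \frac{n}{2}\log\left(\frac{4\pi\tau}{r^2}\right) + n.
\end{align*}
The extra $\mathcal{F}^{\mathbb{S}^n}$ thus generated is handled by a second pass of the Hardy argument with weight $\epsilon_2$ in place of $\epsilon_1$, while the logarithmic term, once integrated against the radial Gaussian-like measure produced in step 3 of the previous proof, contributes a continuous function of $\epsilon_2$ vanishing at $0$. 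Collecting all pieces gives $\mathcal{W}^{C(N)}(f,g,\tau)\geqslant -\Psi(\epsilon_1,\epsilon_2,\epsilon_3)$ with $\Psi$ continuous and vanishing at the origin, and the corollary follows by taking the infimum over $(f,\tau)$.
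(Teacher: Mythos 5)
Your proposal is correct in outline and shares the scaffolding of the paper's proof (separation of variables, the Euclidean log-Sobolev inequality as the nonnegative main term, and the weighted Hardy inequality to absorb a $K/r^2$ error), but it handles the extra $\epsilon_2\mathcal{N}^{\mathbb{S}^n}$ term by a genuinely different device. The paper's trick is a single subtraction: it subtracts $(1+\epsilon_2)\mathcal{W}^{C(\mathbb{S}^n)}(f+\delta,g,\tau_0)\geqslant 0$ from the lower bound coming from $L$, which exactly cancels the $(1+\epsilon_2)\mathcal{N}^{\mathbb{S}^n}$ contribution after separation of variables; what is left is an $\mathcal{F}^{\mathbb{S}^n}$ term with coefficient $(1-\epsilon_1)\tau/r^2-(1+\epsilon_2)\tau_0/r^2$, a Jensen step to handle the residual $-\epsilon_2 a_r e^{-a_r}$, and then Hardy. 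You instead keep coefficient $1$ on $\mathcal{N}^{\mathbb{S}^n}$ so as to assemble $\mathcal{W}^{\mathbb{R}^{n+1}}(f,g_{\mathrm{eucl}},\tau)$, and then bound the leftover $-\epsilon_2\mathcal{N}^{\mathbb{S}^n}(\tilde f+\delta)$ \emph{pointwise in $r$} via the log-Sobolev inequality on $\mathbb{S}^n$ at scale $\tau/r^2$, converting it into an $\epsilon_2(\tau/r^2)\mathcal{F}^{\mathbb{S}^n}$ term plus a logarithm. The upshot is the same: after using $\mathcal{F}^{\mathbb{S}^n}\geqslant n(n-1)$, one gets a $K/r^2$ weight with $K\sim-(\epsilon_1+\epsilon_2)n(n-1)$, absorbable by the radial Dirichlet slack via Hardy exactly when $\epsilon_1+\epsilon_2<\tfrac{n-1}{n}$, and a handful of $\epsilon$-small constants and logs form $\Psi$. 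Your approach uses one more log-Sobolev application than the paper (once on $\mathbb{R}^{n+1}$ and once on $\mathbb{S}^n$) but is slightly more transparent about which piece of the error each inequality is tackling; the paper's $(1+\epsilon_2)$-weighted subtraction is the more economical bookkeeping.

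Two small inaccuracies. First, the threshold you wrote, $\tau_0/\tau\leqslant 1/(1-\epsilon_1 n/(n-1))$, is a number greater than $1$, so it is vacuously true for $\tau_0<\tau$; it should read $\tau_0/\tau\leqslant 1-\tfrac{\epsilon_1 n}{n-1}$ (or, tracking both perturbations, with $\epsilon_1+\epsilon_2$ in place of $\epsilon_1$). The qualitative conclusion you draw from it — continuity in $\epsilon_1$ with value $1$ at $\epsilon_1=0$ — is unaffected. Second, the clean inequality $\mathcal{W}^{C(N)}\geqslant\mathcal{W}^{\mathbb{R}^{n+1}}(f,g_{\mathrm{eucl}},\tau)-\mathcal{E}$ is only true after accounting for the radius-dependent normalization shift $\delta$: the fiber function $\tilde f$ is normalized against $dv^N$, not $dv^{\mathbb{S}^n}$, so the Euclidean term should be taken at $\tilde f+\delta$ and the bounded constant $\delta$ must be included in $\Psi$ (as the paper does when the stray $-\delta$ appears in its displayed estimate). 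Neither issue threatens the proof, but both need to be patched if this is to be made precise.
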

\begin{proof}
The proof is basically the same as the last proposition, but uses the log-Sobolev inequality a bit differently.
\\
\indent By the separation of variables formula, the lower bound $L(\epsilon_1,\epsilon_2,\epsilon_3)$ on the link $N$ implies the following lower bound on the cone $C(N)$ :
\begin{align*}
\mathcal{W}^{C(N)}(f,g,\tau)&\geqslant \int_0^\infty \left[\frac{\tau}{r^2}\left((1-\epsilon_1)\mathcal{F}^{\mathbb{S}^n}-n(n-1)\right)+(1+\epsilon_2)\mathcal{N}^{\mathbb{S}^n}-\frac{n}{2}\log\left(4\pi\frac{\tau}{r^2}\right)-n\right]\left(\frac{e^{-a_r}}{(4\pi\tau)^{\frac{1}{2}}} dr\right)\\
&-\delta-\epsilon_3 \\
&+\int_{\mathbb{S}^n} \left(\int_0^\infty \left[\tau(\partial_r f)^2+a_r-1\right]\left(\frac{e^{-a_r}}{(4\pi\tau)^{\frac{1}{2}}} \right)\right)\left(\frac{e^{-\tilde{f}}}{\left(4\pi\tau r^{-2}\right)^{\frac{n}{2}}}\right)drdv,
\end{align*}
where the functionals on $\mathbb{S}^n$ are taken at $f+\delta+\frac{n}{2}\log(4\pi\tau)$
\\

 The next idea is to substract 
$$(1+\epsilon_2)\mathcal{W}^{C(\mathbb{S}^n)}(f+\delta,g,\tau_0)\geqslant 0$$ 
from the previous lower bound.
\\
\indent After using Jensen inequality to take care of the $-\epsilon_2 a_re^{-a_r}$ term that is left, the expression left is :
\begin{align*}
\mathcal{W}^{C(N)}(f,g,\tau)&\geqslant \int_0^\infty \left[\frac{\tau}{r^2}\left((1-\epsilon_1)\mathcal{F}^{\mathbb{S}^n}-n(n-1)\right)-(1+\epsilon_2)\frac{\tau_0}{r^2}\left(\mathcal{F}^{\mathbb{S}^n}-n(n-1)\right)\right]\left(\frac{e^{-a_r}}{(4\pi\tau)^{\frac{1}{2}}} dr\right)\\
&\frac{n+1}{2}\log\left(\frac{\tau}{\tau_0^{1+\epsilon^2}}\right)-\epsilon_2n-\delta-\epsilon_3 \\
&+\int_{\mathbb{S}^n} \left(\int_0^\infty \left[(\tau-(1+\epsilon_2)\tau_0)(\partial_r f)^2-\epsilon_2\right]\left(\frac{e^{-a_r}}{(4\pi\tau)^{\frac{1}{2}}} \right)\right)\left(\frac{e^{-\tilde{f}}}{\left(4\pi\tau r^{-2}\right)^{\frac{n}{2}}}\right)drdv.
\end{align*}
An application of the weighted Hardy inequality like in the theorem implies a lower bound that tend to $0$ as $(\epsilon_1,\epsilon_2,\epsilon_3)$ tends to $(0,0,0)$.

\end{proof}

\subsection{Perelman's functionals on manifolds with conical singularities and asymptotically conical manifolds}

Let us now give a first application of the previous finiteness results to the study of manifolds asymptotically conical and manifold with conical singularities thanks to Perelman's functionals.

\begin{defn}\label{manifold with conical singularities}
	We will say that a metric space $(M,g)$ is a \emph{manifold with conical singularities} modeled on the cones $C(N_1),...,C(N_k)$ at the points $x_1,...,x_k$ if $(M\backslash\{x_1,...,x_k\},g)$ is a smooth manifold and if, for each $j \in \{1,...,k\}$, there exist $\epsilon_j>0$ and, a diffeomorphism $\phi_j : (0,\epsilon_j)\times N_j\to B(x_j,\epsilon)$, such that as $r\to 0$, for all $k\in\mathbb{N}$ :
	$$r^k|\nabla^k\left(\phi_j^*(g)-\left(dr^2 + r^2g^{N_j}\right)\right)|\to 0.$$
\end{defn}
\begin{defn}
	We will say that a complete manifold $(M,g)$ is \textit{smoothly asymptotic to the cone $C(N)$} at infinity if there exists a compact $K\subset M$, a real $R>0$ and a diffeomorphism 
	$$\phi : (R,+\infty)\times N\to M\backslash K$$ such that, as $r\to +\infty$, for $k\in \mathbb{N}$, 
	
	$$r^k|\nabla^k\left(\phi^*(g)-\left(dr^2 + r^2g^{N}\right)\right)|\to 0.$$
\end{defn}
\begin{note}
	The norms and derivatives are computed thanks to the cone metric.
\end{note}

\begin{cor}
	Let $(M^n,g)$ be a compact manifold with conical singularities such that one singularity is modeled on a cone $C(N)$ on a section $N$ such that $\lambda^N < (n-2)$.
	
	Then $$\lambda^M = -\infty.$$
	
	Conversely, if each singularity model is modeled on a cone $C(N_i)$ with a link $N_i$ such that $\lambda^{N_i} > (n-2)$, then, $$\lambda^M > -\infty.$$
\end{cor}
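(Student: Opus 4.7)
The plan is to reduce both statements to the characterization of $\lambda^{C(N)}$ obtained in the main theorem of this section, applied with links of dimension $n-1$: for a link $N$ of dimension $n-1$, the theorem reads $\lambda^{C(N)} = -\infty$ if and only if $\lambda^N < n-2$, and $\lambda^{C(N)} = 0$ whenever $\lambda^N \geqslant n-2$. In both directions I will use the diffeomorphism $\phi_j$ from the definition of a conical singularity to transfer quantities between $M$ and the model cone, exploiting the fast decay $r^k |\nabla^k(\phi_j^* g - g_{C(N_j)})| \to 0$.

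For the first (necessity) direction, I would transplant to $M$ the explicit test function used in the proof of the cone theorem. Given a singularity $x_j$ with link $N$ satisfying $\lambda^N < n-2$, fix an $\mathcal{F}^N$-minimizer $\tilde u$ and choose exponents $a, b > 0$ and $r_0 > 0$ with $\tfrac{n-2}{2} \leqslant a < \tfrac{1}{2}\sqrt{(n-1)(n-2) - \lambda^N}$ (compatible precisely because $\lambda^N < n-2$), and define in the cone coordinates supplied by $\phi_j$ a function $v(r,x) = b\, r^{-a}\, \tilde u(x)\, \chi_{[0, 2r_0]}(r)$, extended by $0$ elsewhere. The decay hypothesis yields $R^M - R^{C(N)} = o(r^{-2})$, volume form $r^{n-1}(1+o(1))\, dv^N\, dr$, and $|\nabla^M v|^2 = |\nabla^{C(N)} v|^2(1+o(1))$; inserting these in $\mathcal{F}^M(v)$, the leading-order integral is precisely the one driven to $-\infty$ by the non-integrability at $0$ of the radial weight $r^{n-3-2a}$, while the perturbations are integrable. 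Mollifying the $H^1$ function $v$ and renormalizing $\int e^{-f} dv = 1$ preserves this, giving $\lambda^M = -\infty$.

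For the second (sufficiency) direction, fix any smooth positive $u$ on $M \setminus \{x_1,\dots,x_k\}$ with $\int_M u^2 \, dv = 1$. Choose small cone neighborhoods $U_i = \phi_i((0, 2\epsilon_i) \times N_i)$ of the singularities and a smooth partition of unity $\chi_0^2 + \sum_i \chi_i^2 = 1$ with $\mathrm{supp}\,\chi_i \subset U_i$ and $\chi_0$ supported in the complementary relatively compact smooth region $M_0$. The identity $\sum_i \chi_i \nabla \chi_i = 0$ yields the IMS-type localization
\[
\int_M [4|\nabla u|^2 + R u^2] \, dv = \sum_i \int_M [4|\nabla(\chi_i u)|^2 + R(\chi_i u)^2] \, dv - 4 \sum_i \int_M |\nabla \chi_i|^2 u^2 \, dv,
\]
whose last term is bounded by a constant $C$ independent of $u$. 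The $\chi_0$-piece satisfies $\int [4|\nabla(\chi_0 u)|^2 + R(\chi_0 u)^2] \geqslant \inf_{M_0} R \cdot \int (\chi_0 u)^2$. For each singular piece, transferring to the cone via $\phi_i$, the main theorem gives $\lambda^{C(N_i)} = 0$, hence $\int_{C(N_i)} [4|\nabla v|^2 + R^{C(N_i)} v^2] \, dv^{C(N_i)} \geqslant 0$ for compactly supported $v$; the error $R^M - R^{C(N_i)} = o(r^{-2})$ is absorbed into $\int |\nabla(\chi_i u)|^2$ via the weighted Hardy inequality on $(0, \epsilon_i)$, exactly as in the proof of the cone lower bound. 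Summing yields a uniform bound $\int_M [4|\nabla u|^2 + R u^2] \geqslant -C'$, hence $\lambda^M > -\infty$.

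The main obstacle is the perturbation analysis in the second step: $R^M$ and $R^{C(N_i)}$ differ by a term that is $o(r^{-2})$ but not bounded, so one cannot directly compare the two functionals. The weighted Hardy inequality on $(0,\epsilon_i)$ with weight $r^{n-1}$ is what makes the absorption possible, and the strict inequality $\lambda^{N_i} > n-2$ (equivalently $\lambda^{N_i} - (n-1)(n-2) > -(n-2)^2$) provides exactly the room needed in the Hardy constant, as in the proof of the lower bound for $\nu^{C(N)}$. Choosing the neighborhoods $U_i$ sufficiently small then keeps the resulting loss arbitrarily small.
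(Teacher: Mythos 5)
Your proof is correct and takes the paper's route: transplant the cone test function in conical coordinates for the necessity direction, and combine the weighted Hardy inequality near each singularity with a compactness bound on the smooth bulk for sufficiency, with your IMS localization giving a clean formalization of the gluing that the paper's proof leaves implicit. One small imprecision worth flagging: since the definition only gives $R^M - R^{C(N_j)} = o(r^{-2})$ (not $O(r^{-2+\delta})$), the perturbation to $\mathcal{F}^M(v)$ in the first direction need not itself be integrable; what the argument actually needs — and has — is that it is \emph{dominated} by the divergent leading term, the room for which is exactly the strict inequality $\lambda^N < n-2$, consistent with the paper's remark that the borderline case $\lambda^N = n-2$ is undecidable at this decay rate.
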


\begin{proof}
		
	By the definition of a conical singularity, in small balls around a conical singularity, there exists coordinates in which the manifold is a warped product of the form of the ones studied in the appendix A.3. In particular, we can control how far the expression of $\mathcal{F}^M$ differs from $\mathcal{F}^N$ in the same coordinates (see appendix A.3.). As a consequence, for all conical singularity, there exists $\epsilon'_i>0$ small enough such that the difference between the expressions is less than $|\lambda^{N_i} - (n-2)|$, so, in the two cases : 
	
	\begin{enumerate}
		\item If $\lambda^{N_i}<(n-2)$ for a conical singularity, then it is infinite.
		
		Like in the proof of the cone case, we can consider a function $v = \frac{b}{r^a}1_{[0,r_0]}$ for $r_0<\epsilon'$ small enough and estimate just like in 2.3.1.
		
		\item If $\lambda^{N_i} > (n-2)$ for all conical singularities, then it is finite. It is a little more complicated to see :
				
		On each of the balls $B(x_i,\epsilon_i)$, we can use the weighted Hardy inequality of \cite{hardy} just like in 2.3.2. and get a lower bound on the $\lambda$-functional on the balls $B(x_i,\epsilon'_i)$. The rest of the manifold being smooth and compact, there exists another lower bound for the $\mathcal{F}$-functional.
	\end{enumerate}
\end{proof}

\begin{rem}
	We cannot decide if $\lambda^M$ is finite or not for a conical singularity modeled on $C(N)$ such that $\lambda^N = (n-2)$. 
	
	Ideed, considering that the metric becomes conical at a rate $\epsilon(r)$ and by using a function $v = \frac{b}{r^{\frac{n-1}{2}}}1_{[0,r_0]}$, we get that $$\lambda^M \geqslant\int_0^{\infty}\frac{\mathcal{O}(\epsilon(r))}{r}dr,$$ and if $\epsilon(r)$ does not converge to zero fast enough and the $\mathcal{O}(\epsilon(r))$ happens to be negative (this is easy to construct as a warped product on $(\mathbb{R}\times \mathbb{S}^{n-1},dr^2 + h(r)^2 g^{\mathbb{S}^{n-1}})$), this may diverge.
	\\
	
	 If we assume a convergence speed in a positive power of $r$ for example, then, $\lambda^N = (n-2)$ also implies that $\lambda^M >-\infty$ by looking more closely at the proof of the \emph{strict} Hardy inequality in \cite{hardy}.
\end{rem}

\begin{cor}
	For $(M^n,g)$, a manifold smoothly asymptotic to the cone $C(N)$ at infinity.
	
	If $\lambda^N\leqslant (n-2)$, then, $$\nu^M(g) = - \infty.$$
	
	If, $\lambda^N > (n-2)$, then, $$\nu^M \leqslant \nu^{C(N)}.$$
\end{cor}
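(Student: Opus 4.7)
The plan is to exploit parabolic scale invariance of $\nu$: the identity $\mu^M(g,\tau) = \mu^M(\lambda^2 g, \lambda^2\tau)$ combined with the smooth asymptoticity of $M$ to $C(N)$ means that, for $\tau \to \infty$, the rescaled picture is one in which the compact core of $M$ shrinks to a point while the asymptotic region fills out $C(N)$ near its tip. I will show that for every smooth, compactly supported test function $v$ on the smooth part of $C(N)$, one can transplant $v$ to a test function $\tilde v_\tau$ on $M$ such that $\mathcal W^M(\tilde v_\tau, g, \tau) \to \mathcal W^{C(N)}(v, g^{C(N)}, 1)$ as $\tau \to \infty$. This gives $\nu^M(g) \leqslant \nu^{C(N)}$ uniformly; the first statement of the corollary is then immediate from Theorem~2.3, which forces $\nu^{C(N)} = -\infty$ as soon as $\lambda^N \leqslant n-2$, and the second statement is exactly what one gets.

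Concretely, given a smooth $v$ on $C(N)$ compactly supported in $[a,b]\times N$ with $0 < a < b$ and $\int_{C(N)} v^2\,dv^{C(N)} = 1$, I set, for every $\tau$ with $a\sqrt{\tau} > R$,
\[
\tilde v_\tau\bigl(\phi(r,y)\bigr) \;:=\; \tau^{-n/4}\, v\!\left(\tfrac{r}{\sqrt{\tau}},\, y\right),
\]
extended by zero outside $\phi([a\sqrt{\tau},b\sqrt{\tau}] \times N)$. Under the change of variable $s = r/\sqrt{\tau}$, each ingredient of $\mathcal W^M(\tilde v_\tau, g, \tau)$ converts into its $\mathcal W^{C(N)}(v, g^{C(N)}, 1)$ counterpart: the volume element transforms as $r^{n-1}dr\, dv^N = \tau^{n/2} s^{n-1}ds\,dv^N$, the gradient gives $\tau |\nabla \tilde v_\tau|^2_{g^{C(N)}} = \tau^{-n/2}|\nabla^{C(N)} v|^2(s,y)$, the scalar curvature satisfies $\tau \R^{C(N)}(\sqrt{\tau}s, y) = \R^{C(N)}(s,y)$, and the entropy term becomes $-\tilde v_\tau^2 \log \tilde v_\tau^2 = \tau^{-n/2}\bigl(-v^2 \log v^2 + \tfrac{n}{2}\, v^2 \log \tau\bigr)$, whose extra $\tfrac{n}{2}\log \tau$ exactly cancels the $-\tfrac{n}{2}\log\tau$ contained in the normalizing constant $-\tfrac{n}{2}\log(4\pi\tau)$. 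This is just parabolic scale invariance read off at the level of the integrand.

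The errors introduced by replacing $\phi^*g$ with $g^{C(N)}$ on the support of $\tilde v_\tau$ are controlled by the smooth asymptotic hypothesis $r^k |\nabla^k(\phi^*g - g^{C(N)})| \to 0$. The delicate point is the $\tau \R$ contribution: $\R^M - \R^{C(N)} = o(r^{-2})$ becomes, after the change of variable, $\tau \cdot o((\sqrt{\tau}\,s)^{-2}) = o(s^{-2})$, uniformly bounded on $s \in [a,b]$. Similar bounds handle the gradient and volume contributions, giving $\mathcal W^M(\tilde v_\tau, g, \tau) \to \mathcal W^{C(N)}(v, g^{C(N)}, 1)$ together with $\int_M \tilde v_\tau^2\,dv_g = 1 + o(1)$, which absorbs into a harmless renormalization. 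Taking the infimum over all such $v$, which realize $\nu^{C(N)}$ via a cutoff-away-from-the-tip approximation, yields
\[
\nu^M(g) \;\leqslant\; \liminf_{\tau \to \infty} \mu^M(g, \tau) \;\leqslant\; \nu^{C(N)}.
\]

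The main obstacle is precisely this uniform error control in the scalar-curvature term: the entire argument relies on the fact that the smooth asymptotic assumption holds at every order, since the factor $\tau$ in front of $\R$ would otherwise amplify any non-integrable deviation. A merely $C^1$ or $C^2$ asymptoticity would not suffice.
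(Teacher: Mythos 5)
The paper does not write out a proof of this corollary, so there is no argument in the source to compare against. Your approach—transplanting compactly supported test functions from the cone into the asymptotic region of $M$ via parabolic rescaling to $\tau\to\infty$—is the natural mechanism behind the author's accompanying remark that ``the $\nu$-functional of the cones at infinity give an upper bound for the entropy of the whole manifold.'' The scaling computation is correct: the $\tau^{n/2}$ from $r^{n-1}dr\,dv^N$ cancels the $\tau^{-n/2}$ in $\tilde v_\tau^2$, the $\tfrac n2\log\tau$ produced by the entropy term cancels the one in the $\mathcal{W}$-normalization, and the error bound $\tau\,|\R^M-\R^{C(N)}|(\sqrt\tau\,s,y)=o(1)$ uniformly on $s\in[a,b]$ follows from the weighted decay of $\phi^*g-g^{C(N)}$ at orders $k=0,1,2$. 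Your closing sentence—that ``the smooth asymptotic assumption holds at every order'' is needed and that a $C^2$ asymptoticity would fail—overstates the requirement: weighted decay through second order is exactly what the $\tau\R$ term consumes, and higher orders play no role in this argument.

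The one step you have not actually carried out is hiding in the phrase ``a cutoff-away-from-the-tip approximation.'' Your transplant is defined only for $v$ supported in $[a,b]\times N$ with $a>0$, so what you prove is $\nu^M\leqslant\nu^{C(N)}_c$, where $\nu^{C(N)}_c$ is the infimum of $\mathcal{W}^{C(N)}(\cdot,g^{C(N)},1)$ over such compactly supported test functions; the corollary needs $\nu^{C(N)}_c=\nu^{C(N)}$, and a priori one only has $\nu^{C(N)}_c\geqslant\nu^{C(N)}$. When $\lambda^N\leqslant n-2$, this follows by truncating the diverging test functions $v=(b/r^a)\tilde u\,\chi_{[0,2r_0]}$ from the first half of the proof of Theorem~2.3 at $r=\epsilon$ and noting that $\int_\epsilon^{r_0} r^{n-2a-2}\,dr$ still tends to $-\infty$ as $\epsilon\to 0$, so $\nu^{C(N)}_c=-\infty$. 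When $\lambda^N>n-2$, the weighted Hardy inequality that makes $\nu^{C(N)}$ finite in the first place also shows that the extra gradient energy of a logarithmic cutoff near $r=0$ goes to zero, giving density of compactly supported functions in the relevant energy space and hence $\nu^{C(N)}_c=\nu^{C(N)}$. With this supplied, your proof is complete and establishes both halves of the corollary.
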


\begin{rem}
	The result is not a characterization of the finiteness of the functional as it was for manifolds with conical singularities, but the $\nu$-functional of the cones at infinity give an upper bound for the entropy of the whole manifold. A similar upper bound for manifolds with conical singularities is true for the $\mu$-functional.
\end{rem}

\section*{Acknowledgements}
	I would like to thank Richard Bamler for inviting me at UC Berkeley and supervising this work.

\section{A global Pseudolocality result}
Perelman's Pseudolocality theorem states that if a region $\Omega$ is "close enough" to the Euclidean space, then there is a smaller region inside, which has bounded curvature for a small time. 

 In particular this region will be nonsingular for small times.
\\

 Looking at the proof of this theorem, we realize that the "close enough" to the Euclidean space hypothesis is only used to ensure that for small $\tau$, the $\mu(g,\tau)$-functional of $\Omega$ is larger than $-\eta_n$ for some $\eta_n>0$.
\\

 Our modification states that there exists $\eta_n>0$ such that if for all $\tau \leqslant T_0$, $\mu(g,\tau)\geqslant-\eta_n$, then the flow is nonsingular on $[0,T_0]$ and has a bound on its curvature tensor on this interval only depending on the values of the $\mu(g,\tau)$ for $\tau\in [0,T_0]$. 

 In dimension 3, from the description of the singularities given by Perelman, we have an explicit value : 
$$\eta_3 = 1-\log 2 = \nu(\mathbb{S}^2).$$
\subsection{The classical theorem} 
Let us state the theorem, the proof can be found in Kleiner-Lott notes \cite{KleinLott}.
\begin{thm}[cf. Theorem I.10.1, Perelman] For every $\alpha > 0$ there exist $\delta$, $\epsilon > 0$ with the following property :
\\

 Suppose that we have a smooth pointed Ricci flow solution $(M,(x_0, 0), g(.))$ defined for $t \in [0,(\epsilon r_0)^2]$, such that each time slice is complete.

 Suppose that for any $x \in B_0(x_0, r_0)$ and $\Omega \subset B_0(x_0, r_0)$, we have \\
$$\R(x,0) \geqslant -r_0^{-2}$$ and $$vol(\partial\Omega)^n \geqslant (1 - \delta) c_n vol(\Omega)^{n-1},$$ where $c_n$ is the Euclidean isoperimetric constant. 
\\

Then, $|\Rm|(x,t) < \alpha t^{-1} + (\epsilon r_0)^{-2}$ whenever $0 < t \leqslant (\epsilon r_0)^2$
and $d(x, t) = dist_t(x, x_0) \leqslant \epsilon r_0$.
\end{thm}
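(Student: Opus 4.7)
The plan is to argue by contradiction following Perelman, combining his point-selection lemma with a blow-up analysis whose limit is forced, via entropy monotonicity, to be a Gaussian soliton. By parabolic rescaling we may take $r_0 = 1$. Fix $\alpha > 0$; if the conclusion fails there exist sequences $\delta_k, \epsilon_k \to 0$, pointed Ricci flows $(M_k, (x_{0,k}, 0), g_k(\cdot))$ satisfying the scalar curvature and isoperimetric hypotheses with parameter $\delta_k$, and points $(y_k, t_k)$ with $t_k \leqslant \epsilon_k^2$, $d_{t_k}(y_k, x_{0,k}) \leqslant \epsilon_k$, and $|\Rm|(y_k, t_k) \geqslant \alpha/t_k + 1/\epsilon_k^2$.

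The first ingredient is to convert the isoperimetric hypothesis into control on Perelman's functional. The near-Euclidean isoperimetric inequality implies, via the equivalence between the sharp Euclidean isoperimetric inequality and Beckner's form of the logarithmic Sobolev inequality, a lower bound $\mu(g_k(0), \tau) \geqslant -\eta(\delta_k)$ holding for all $\tau \in (0, 1]$, with $\eta(\delta_k) \to 0$. Monotonicity of $\mu$ under the Ricci flow (applied via the conjugate heat equation based at a later time) then propagates this lower bound to every $t \in [0, \epsilon_k^2]$ and every scale $\tau \leqslant 1 - t$.

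Next I would run Perelman's point-picking to replace $(y_k, t_k)$ by a nearby $(\bar y_k, \bar t_k)$, still in a slightly larger parabolic region attached to $(x_{0,k}, 0)$, such that $Q_k := |\Rm|(\bar y_k, \bar t_k) \to \infty$ and $|\Rm| \leqslant 4 Q_k$ throughout a parabolic ball of arbitrarily large rescaled radius around $(\bar y_k, \bar t_k)$. Parabolically rescaling by $Q_k$ and translating time yields a sequence $\tilde g_k(t)$ of Ricci flows defined on intervals exhausting $(-\infty, 0]$, with uniform curvature bound and $|\Rm|(\bar y_k, 0) = 1$.

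The hard part will be extracting a smooth limit: since the scalar curvature bound $\R \geqslant -r_0^{-2}$ is imposed only on the initial slice, the usual volume comparison does not directly yield $\kappa$-noncollapsing on the rescaled flows. One must localize noncollapsing by testing the propagated $\mu$-lower bound with a bump concentrated at $(\bar y_k, 0)$ on a very small scale, or equivalently by estimating Perelman's reduced volume based at $(\bar y_k, 0)$ from below using the log-Sobolev control on the initial time slice. Once noncollapsing is in place, Hamilton's compactness produces a complete limit $(M_\infty, \tilde g_\infty(t), y_\infty)$ on $t \in (-\infty, 0]$ with $|\Rm| \leqslant 4$, $|\Rm|(y_\infty, 0) = 1$, and $\mu(\tilde g_\infty(t), \tau) \geqslant 0$ for all admissible $(t, \tau)$. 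Testing $\mathcal{W}$ with heat-kernel-like Gaussians of vanishing scale around $y_\infty$ forces $\nu(\tilde g_\infty(0)) = 0$; the rigidity case of the Euclidean log-Sobolev inequality then gives $\tilde g_\infty(0)$ flat, contradicting $|\Rm|(y_\infty, 0) = 1$ and closing the argument.
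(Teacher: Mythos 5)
Your proposal reorganizes Perelman's contradiction argument in a way that is different in emphasis from the paper's (brief, Kleiner--Lott-flavored) proof, and in its present form it has a genuine gap in the final rigidity step.

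The paper's proof is structured as: (i) point-selection to find a high-curvature $(\bar x_k,\bar t_k)$; (ii) an \emph{upper} bound $\mu(g_k(\tilde t_k), \cdot) \leqslant -\beta(\alpha,n) < 0$ obtained from the conjugate heat kernel based at $(\bar x_k,\bar t_k)$ via Perelman's differential Harnack inequality $v \leqslant 0$ (KL Lemma 33.4, adapted in the paper's Proposition \ref{global pseudolocality}); (iii) the isoperimetric and scalar curvature hypotheses produce a \emph{lower} bound $\mu \geqslant -\eta(\delta)$ which, for small $\delta$, contradicts (ii). You instead invert the order: you first convert the isoperimetric hypothesis into a lower bound on $\mu$, propagate it by monotonicity, blow up at the bad point, extract a limit, and then try to conclude flatness from $\nu(\tilde g_\infty(0)) = 0$ by ``rigidity of the Euclidean log-Sobolev inequality.'' This inversion is appealing (it makes the role of the isoperimetric input transparent) but it moves the burden onto a rigidity statement that is delicate in precisely the regime you need it.

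The gap: knowing that $\mu(\tilde g_\infty(0),\tau) \geqslant 0$ for all admissible $\tau$, together with $\mu(\tau) \to 0$ as $\tau \to 0$, gives $\nu(\tilde g_\infty(0)) = 0$ as an \emph{infimum}, but the limit manifold is noncompact and the infimum need not be attained at any $\tau$ or by any minimizer. Without an attained minimizer you cannot apply the equality case of the log-Sobolev inequality or the soliton characterization of equality in the $\mathcal{W}$-monotonicity; ``$\nu = 0 \Rightarrow$ flat'' is not an off-the-shelf statement for complete noncompact manifolds with merely bounded geometry. This is exactly what the conjugate-heat-kernel route avoids: the paper (and Kleiner--Lott) work with the fundamental solution of the conjugate heat equation based at $(\bar x_k,\bar t_k)$ and show the pointwise Harnack quantity $v_\infty \leqslant 0$, then $v_\infty \equiv 0$, forcing the shrinking soliton equation \emph{for a concrete solution}, from which the curvature blow-up contradicts the uniform bound on $|\Rm|$. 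You need either this argument or a precise compensating lemma establishing attainment (or a cut-off version of the rigidity) in your limit; as written the step ``rigidity gives $\tilde g_\infty(0)$ flat'' does not follow.

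Two smaller points. First, the rescaled flows obtained from the point-selection lemma live on a time interval of definite backward length $[-1/2,0]$ after normalization, not on intervals exhausting $(-\infty,0]$; the limit is not an ancient flow, and you should not assume it is. Second, the collapse case needs to be treated separately (in the paper's adaptation this is the second branch of the case analysis in Proposition \ref{global pseudolocality}); your sentence about ``localizing noncollapsing'' gestures at this but does not resolve how to handle the subsequence on which the injectivity radius degenerates at the curvature scale.
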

\begin{proof}
The proof is a proof by contradiction, by considering a sequence of pointed manifolds that are counterexamples of the property : $(M_k, x_{0,k},g_k(.))$ on an interval $[0,\tau_k]$. The idea is to first, find an upper bound on the $\mu$-functional of these manifolds for $k$ large enough, and then contradict this bound thanks to a bound on the isoperimetric constant and the scalar curvature that imply a lower bound on $\mu(\tau)$ for small $\tau$.
\\

The crucial element to note is that the lower bound on the isoperimetric constant and the scalar curvature are only used at the very last step of the proof to contradict an upper bound for $\min_{0\leqslant\tau\leqslant\tau_k}\mu^{M_k}(g_k,\tau)$
given by the lemma 33.4 (thanks to conjugate heat kernel centered at a well chosen high curvature space time point).
\\

 In particular, any condition that contradicts this upper bound of that $\min_{\tau\leqslant\tau_k}\mu(g_k,\tau)$ would imply the result.
\end{proof}

\subsection{A global version}
The only step where the isoperimetric constant of our domain is used to have a bound on the curvature is the very last step.

 In particular, manifolds that have a $\nu$-functional (the minimum of the $\mu(\tau)$) close enough to $0$ contradict the lemma 33.4 and we can rephrase the result as :
\begin{prop}[Global pseudolocality]\label{global pseudolocality}
	For all $\alpha>0$, there exists a constant $\eta_n(\alpha)>0$ such that :
	
	If $(M,g)$ is a complete Riemannian manifold with bounded curvature such that there exists $T > 0$ such that, for all $0 < \tau \leqslant T$,
	
	$$\mu(g,\tau) > - \eta_n(\alpha),$$
	then, the flow exists for all $0<t<T$ and :
	
	$$|Rm(g_t)|\leqslant \frac{\alpha}{t}.$$
\end{prop}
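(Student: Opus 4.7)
The plan is to reuse Perelman's classical proof of pseudolocality essentially verbatim, replacing only the terminal contradiction — where the local isoperimetric/scalar curvature hypothesis is invoked — by a direct appeal to the global lower bound on $\mu$. I argue by contradiction: fix $\alpha > 0$ and suppose no $\eta_n(\alpha) > 0$ works. Then one extracts a sequence of complete bounded-curvature Ricci flows $(M_k, g_k(\cdot))$ with $\mu(g_k(0), \tau) > -1/k$ for every $\tau \in (0, T_k]$, but for which the conclusion $|\Rm(g_k(t))| \leq \alpha / t$ fails at some time $t \leq T_k$. Looking at the first such failure time and running Perelman's parabolic point-picking, one produces $(\bar{x}_k, \bar{t}_k)$ with $\bar{t}_k \leq T_k$ and $Q_k := |\Rm|(\bar{x}_k, \bar{t}_k) \geq \alpha / \bar{t}_k$, together with the usual uniform control of $|\Rm|$ on the backward parabolic ball of radius $Q_k^{-1/2}$ around this point.

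I then introduce the conjugate heat kernel $u_k = (4\pi \tau)^{-n/2} e^{-f_k}$ based at $(\bar{x}_k, \bar{t}_k)$ and follow $\mathcal{W}(f_k(s), g_k(s), \bar{t}_k - s)$ for $s \in [0, \bar{t}_k)$. Perelman's blow-up analysis, as packaged in Lemma 33.4 of Kleiner--Lott, produces from the parabolic rescaling by $Q_k$ and the local curvature control a universal constant $\epsilon_0 = \epsilon_0(\alpha, n) > 0$ and some $\bar\tau_k \in (0, \bar{t}_k]$ such that
$$\mathcal{W}(f_k(\bar{t}_k - \bar\tau_k), g_k(\bar{t}_k - \bar\tau_k), \bar\tau_k) \leq -\epsilon_0.$$
By the monotonicity of $\mathcal{W}$ under the coupled system \eqref{evolution equation W}, applied on the interval $s \in [0, \bar{t}_k - \bar\tau_k]$, and since $\mu(g, \tau)$ is by definition the infimum of $\mathcal{W}(\cdot, g, \tau)$ over admissible test functions, one obtains
$$\mu(g_k(0), \bar{t}_k) \leq \mathcal{W}(f_k(0), g_k(0), \bar{t}_k) \leq \mathcal{W}(f_k(\bar{t}_k - \bar\tau_k), g_k(\bar{t}_k - \bar\tau_k), \bar\tau_k) \leq -\epsilon_0.$$

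Since $\bar{t}_k \leq T_k$, this contradicts $\mu(g_k(0), \bar{t}_k) > -1/k$ as soon as $k > 1/\epsilon_0$, and setting $\eta_n(\alpha) := \epsilon_0(\alpha, n)/2$ delivers the proposition. The main obstacle is not in any new estimate but in extracting from Lemma 33.4 a genuinely quantitative $\epsilon_0(\alpha, n)$ that depends only on $\alpha$ and $n$ and not on the particular counterexample; this is already built into Perelman's original argument, so no additional work is required. What genuinely changes in the global setting is the role of the initial slice: rather than using an isoperimetric/scalar-curvature hypothesis on a small ball at time $0$ to contradict $\mathcal{W}$ at very small scale, one lets the monotonicity of $\mu$ transport the assumed global lower bound on $\mu(g, \tau)$ up to the time $\bar{t}_k$ at the high-curvature point, using the hypothesis only at the single value $\tau = \bar{t}_k \leq T$.
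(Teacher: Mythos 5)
Your proof is correct and follows the same route as the paper: parabolic point selection, rescaling to normalize the curvature at the chosen point, extraction of a uniform entropy drop $\epsilon_0(\alpha,n)$ from Perelman's blow-up analysis (Kleiner--Lott's Lemma 33.4), and propagation back to the initial slice via the $\mathcal{W}$-monotonicity to contradict the hypothesis $\mu(g,\tau)>-\eta_n(\alpha)$ at $\tau=\bar{t}_k$. The only substantive difference is expository: the paper re-proves the adapted Lemma 33.4 as a standalone statement, carrying out the contradiction/compactness argument in full (including the dichotomy between the non-collapsing limit case, which yields a shrinking soliton, and the collapsing case, which yields a flat blow-up with $\mathcal{W}\to-\infty$), whereas you invoke the original lemma as a black box and assert the universality of $\epsilon_0(\alpha,n)$ -- this is correct in spirit, but the reader is left to check that the Kleiner--Lott lemma, stated there for a specific sequence of pseudolocality counterexamples, does apply unchanged to your sequence after point-picking and rescaling.
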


\begin{proof}
	The proof of the statement is adaptation of the classical pseudolocality result, let us give a description of it :
	
	\begin{enumerate}
		\item The first step is to choose a good point that we will be able to blow up at the scale of the curvature :
		
		\begin{lem}[Point selection, Lemma 31.1 \cite{KleinLott}]
			For all $A>0$, there exists $\epsilon>\frac{1}{100 n A}$ such that the solution to the Ricci flow exists on $[0,\epsilon^2]$ and there exists a point $(\bar{x},\bar{t})$ such that :
			
			For all $(x,t)\in B_{\bar{t}}\left(\bar{x}, \frac{A}{10} |Rm(\bar{x},\bar{t})|^{-\frac{1}{2}}\right)\times \left[\bar{t}- \frac{1}{2|Rm(\bar{x},\bar{t})|}\right]$,
			$$|Rm(x,t)|\leqslant 4|Rm(\bar{x},\bar{t})|.$$
		\end{lem}
		
		\item By parabolically scaling the previous Ricci flow by its curvature at $(\bar{x},\bar{t})$, we are in the case of a manifold satisfying :

		$$\sup_M|Rm_T| = |Rm_T(\bar{x})| = 1 > \frac{\alpha}{T}$$
		for some $T > \alpha > 0$, and such that for all $(x,t)\in B_T\left(\bar{x},\frac{2}{100n}\right) \times [0,T]$, $$|Rm_t(x)|\leqslant 4.$$

		\item Let us prove that there is a strictly negative upper bound to the $\mu$-functional. 
		
		\begin{rem}
			This bound will be the $-\eta$ we are looking for and adding the assumption that $\mu(\tau,g) > -\eta$ would lead to a contradiction.
		\end{rem}
		
		\begin{lem}[Adaptation of the proof of Lemma 33.4, \cite{KleinLott}]
			For any Ricci flow $(M^n,g(t))_{0\leqslant t\leqslant T}$ such that 
			$$\sup_M|Rm_T| = |Rm_T(\bar{x})| = 1 > \frac{\alpha}{T}$$
			for some constants $T > \alpha > 0$, and such that for all $(x,t)\in B_T\left(\bar{x},\frac{2}{100n}\right) \times [0,T]$, $$|Rm_t(x)|\leqslant 4,$$ 
			there exists $\tilde{t}\in [T-\frac{\alpha}{2},T]$ and $\eta(n,\alpha) > 0$ and such that $\mu(g(\tilde{t}),T-\tilde{t}) \leqslant -\eta(n,\alpha)$. 
		\end{lem}
		
		\begin{proof}
			Let us prove this result by contradiction. Let us fix $\alpha>0$ and consider a sequence of counterexamples, that is, a sequence of solutions to the Ricci flow $(M_k,g_k(t))_{0\leqslant t\leqslant T_k}$ pointed at $(\bar{x}_k,T_k)$ and $T_k > \alpha$ such that for all $T_k-\frac{\alpha}{2}<t_k<T_k$ :
			
			$$\liminf_{k\to\infty}\sup_{0<t_k<T_k}\mu(g_k(t_k),T_k-t_k) \geqslant 0.$$
			
			Moreover, let us consider the fundamental solutions $u_k$ to the backward heat equation starting at a Dirac at $\bar{x}_k$ at the time $T_k$, that is :
			\begin{align*}
			\partial_t u_k = - \Delta u_k + R_ku_k
			\end{align*}
			and the limit at $T_k$ is the Dirac at $\bar{x}_k$.
			\\

			There are two cases :
			\begin{enumerate}
				\item There is a lower bound on the injectivity radius implying that it is possible to take a sublimit of the sequence by Hamilton's compactness theorem. Let us note it $(M_\infty,g_\infty(.))$ it is defined on a time interval $[T_\infty-\frac{\alpha}{2},T_\infty]$.
				\\
				
				It satisfies $|Rm_\infty|\leqslant 4$ in $B(\bar{x}_\infty, \frac{2}{100n})\times [0,T_\infty]$ and $|Rm_\infty(\bar{x}_\infty,T_\infty)| = 1$.
				
				thanks to the lemma 33.1 of \cite{KleinLott}, the fundamental solution to the backwards heat equation starting at $\bar{x}_\infty,T_\infty$, $u_\infty$ is the smooth limit (on compact subsets, and in particular in $B(\bar{x}_\infty,\frac{1}{100n}))$.
				
				Now, given the smooth convergence, we also have the convergence of 
				
				$v_k(t) := \left[\left(T_k-t\right)(2\Delta_k f_k-|\nabla_k f_k|^2+\R_k)-f_k-n\right]u_k$ associated to $u_k$ to $v_\infty$ associated to $u_\infty$ (we used the usual notation $u = \frac{1}{\left(4\pi(T-t)\right)^{\frac{n}{2}}}e^{-f}$ to define $f$). By Perelman's Harnack inequality, $$v_\infty\leqslant 0.$$ 
				
				Let $\tilde{t}_\infty\in (T_\infty-\frac{\alpha}{2},T_\infty)$, and let us consider $h$ a solution to the forward heat equation starting at a nonnegative function exactly supported in $B_\infty :=B_{\tilde{t}_\infty}(\bar{x}_\infty,\sqrt{T_\infty-\tilde{t}_\infty})$ at $\tilde{t}_\infty$. By Perelman's Harnack inequality :
				
				$$\frac{d}{dt}\int_{B_\infty}hv_\infty dv_\infty \geqslant 0.$$
				
				Moreover, it tends to $0$ as $t$ tends to $T_\infty$. This implies that for all $t\in (\tilde{t}_\infty,T_\infty)$, $\int_{B_\infty}hv_\infty dv_\infty = 0$ because $v_\infty\leqslant 0$. Now since $h$ is strictly positive for $t>\tilde{t}_\infty$, for all $t\in (\tilde{t}_\infty,T_\infty)$, $$v_\infty = 0.$$
				
				This implies that the $\mathcal{W}$-functional at $f_\infty$ (which is the integral of $v_\infty$) is constant when $(g_\infty,f_\infty)$ satisfy the equations of evolution \eqref{evolution equation W}.
				
				This means that they satisfy the following shrinking soliton equation :
				
				$$\Ric_\infty + \nabla^2_\infty f_\infty = \frac{1}{2(T_\infty-t)}.$$
				
				But as we have seen, this means that up to diffeomorphism, the Ricci flow at $(M_\infty,g_\infty)$ just acts by scaling by $T_\infty-t$. In particular, the (non vanishing) curvature blows up at rate $\frac{1}{T_\infty-t}$ when $t\to T_\infty$ which is a contradiction to our curvature bounds.
				
				\item In the second case, it is possible to blow up at the scale of the collapsing injectivity radii to ensure that the injectivity radius equals $1$ to end up with a smooth limiting Ricci flow. 
				
				Because the injectivity radius was collapsing at the scale of the curvature, we end up with a flat Ricci flow which is not the Euclidean space because its injectivity radius is $1$. Since we have blown up even more than the scale of the curvature, the range of $\tau$ to consider for the $\mathcal{W}$ functional is arbitrarily large and in particular, like in the proof of lemma 33.4 in \cite{KleinLott}, we can see that the $\mathcal{W}$ associated to a good cut-off of the backwards heat equation $f_\infty$ leads to :
				
				$$\lim_{\tau\to\infty}\mathcal{W}(f_\infty,g_\infty,\tau) = -\infty.$$
				
				and this implies that there exists $\tilde{t}_k$ depending on the scale of the injectivity radius for each $k$, such that :
				
				$$\lim_{k \to\infty}\mathcal{W}(f_k,g_k(\tilde{t}_k),T_k-\tilde{t}_k) = -\infty.$$
				which is also a contradiction.
				
				\begin{rem}
					Another way to see it is to realize that there is collapsing and that collapsing together with a lower bound on the scalar curvature gives an arbitrarily negative $\nu$-functional (see the non-collapsing theorems in \cite{KleinLott}).
				\end{rem}
				
			\end{enumerate}
		\end{proof}
		
		\item This implies the statement of the proposition, indeed, 
		$$t\mapsto \mu\left(g_k(t),\tau-t\right)  $$
		is increasing and in particular, if for all $0<\tau<T$, 
		$$\mu\left(g_k(0),\tau\right)>-\eta_n(\alpha),$$
		then by monotonicity, we have for all $\tau \geqslant t\geqslant 0$ :
		$$\mu\left(g_k(t),\tau-t\right)>-\eta_n(\alpha).$$
		
		In particular, by the previous lemmas, every $t_k$ when the control of the curvature is not satisfied has to be larger than $T$. Which is the statement of the lemma.
	\end{enumerate}
\end{proof}

\begin{cor}
	With the same assumptions, if we assume that $\nu(g)$ is larger then the infimum of the $\eta_n(\alpha)$ for $\alpha > 0$, then the Ricci flow starting at $(M,g)$ is a type III solution existing for all times.
\end{cor}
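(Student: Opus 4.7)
The plan is to reduce this directly to the Global Pseudolocality Proposition \ref{global pseudolocality}: the corollary is essentially a reformulation whose only content is translating a bound on $\nu$ into a bound on $\mu$ at every scale, and then pushing $T \to \infty$.

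First I would interpret the hypothesis. The statement that $\nu(g)$ exceeds the stated infimum should be read as: there exists $\alpha_0 > 0$ with $\nu(g) > -\eta_n(\alpha_0)$. By the very definition $\nu(g) = \inf_{\tau > 0}\mu(g,\tau)$, this immediately lifts to the scale-wise bound
$$\mu(g,\tau) \;>\; -\eta_n(\alpha_0) \qquad \text{for every } \tau > 0.$$

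Next I would invoke Proposition \ref{global pseudolocality} for an arbitrary $T > 0$. Its hypothesis (the above lower bound restricted to $0 < \tau \leqslant T$) is automatic, so the conclusion yields existence of the Ricci flow on $[0,T]$ together with the curvature estimate $|\Rm(g_t)| \leqslant \alpha_0 / t$ for $t \in (0,T]$. The standing assumption of the proposition that $(M,g)$ be complete with bounded curvature is part of the hypotheses we carry over.

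Finally, since $T > 0$ was arbitrary and the constant $\alpha_0$ is independent of $T$, the existence extends to the whole half-line $[0,+\infty)$ and the estimate $|\Rm(g_t)| \leqslant \alpha_0/t$ holds for every $t > 0$. This is exactly the type III condition. The genuinely hard work (point selection, blow-up, and the shrinking-soliton rigidity contradiction) has already been carried out inside Proposition \ref{global pseudolocality}; there is no real obstacle at the level of the corollary beyond this interpretation step and the observation that the universal constant $\alpha_0$ does not deteriorate as $T \to \infty$.
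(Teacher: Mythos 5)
Your proof is correct and matches the intended argument: the paper states this corollary without proof, treating it as an immediate consequence of the Global Pseudolocality proposition, and your argument is exactly what is meant. Translating the hypothesis into a fixed $\alpha_0>0$ with $\nu(g)>-\eta_n(\alpha_0)$, lifting it via $\nu=\inf_\tau\mu(\cdot,\tau)$ to a $\mu$-bound at every scale, invoking the proposition with arbitrary $T$, and letting $T\to\infty$ with $\alpha_0$ fixed is precisely the implicit chain of reasoning.
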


\begin{rem}
	This result using Perelman's functionals instead of bounds on the scalar curvature and isoperimetric constants gives control for potentially large times and ball radii. It is also compatible to our study of Perelman's functionals on cones as this allows us to take advantage of the scale invariance of cones to construct type III solutions of the Ricci flow coming out of some cones in the next section.
\end{rem}

\begin{rem}
In dimension $3$, thanks to the description of the possible singularities of the Ricci flow given by Perelman's canonical neighborhood theorem, we can find the constant $\eta_3$ explicitely : $\eta_3 = \nu^{\mathbb{R}\times \mathbb{S}^2} = 1-\log 2$ (with equality in the case of the formation of necks).

\end{rem}

\section{Constructing nonsingular type III flows coming out of some cones}
In this section, we will present an application of the global pseudolocality result from the previous section to the construction of nonsingular Ricci flows coming out of some cones. The "some cones" will correspond to cones over particular perturbations of the unit sphere.
\\
\indent The construction is done in a few steps :
\begin{itemize}
\item Find a condition on the link $N$ under which it is possible to smooth out the cone $C(N)$ into a manifold $M$ that has a high enough $\nu$-functional to apply the global pseudolocality. 

\item Thanks to the global pseudolocality result, there exists a Ricci flow starting at $(\mathbb{R}^{n+1},g^M)$ that is an immortal type III solution of the Ricci flow, we will note it $(\mathbb{R}^{n+1},g^{M(t)})$. This flow is asymptotic to the cone $C(N)$ at each time.

\item From this Ricci flow, it is possible to get a sublimit of a blowdown sequence. This will give the wanted Ricci flow coming out of the cone $N$. We expect the resulting Ricci flow to be an expanding soliton (it is true in some cases). 
\end{itemize}
We will focus on a family of manifolds whose Perelman's $\mathcal{W}$-functional is very close to the $\mathcal{W}$-functional of the sphere (which is the crucial point of our proof).
\begin{thm}
There exist $\beta_1(n)$ and $\beta_2(n)$ ($0<\beta_1<1<\beta_2$) such that :
\\
\indent If $(\mathbb{S}^n,g^N)$ is a $n$-dimensional Riemannian manifold satisfying the following set of properties $P(\beta_1,\beta_2)$ :
\begin{itemize}
\item $(N,g^N)\times \mathbb{R}^2$ has positive isotropic curvature (implied by the positiveness of the curvature tensor for example).
\begin{rem}
We know by the sphere theorem of Brendle and Schoen that this implies that $N$ is diffeomorphic to $\mathbb{S}^n$, so the initial supposition is redundant.
\end{rem}
\item $C^0$-closeness :
\begin{align*}
\beta_1^2g^{\mathbb{S}^n}\leqslant g^N \leqslant \beta_2^2g^{\mathbb{S}^n}.
\end{align*}
\item Lower bound on the scalar curvature :
\begin{align*}
\R^N\geqslant \frac{n(n-1)}{\beta_2^2}, 
\end{align*}
\end{itemize}
\indent Then, there exists an immortal type III solution of the Ricci flow coming out of the cone $C(N)$.
\end{thm}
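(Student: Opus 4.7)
The plan is to follow the three-step outline announced at the opening of the section: (i) smooth out the tip of $C(N)$ into a complete metric $g^M$ on $\mathbb{R}^{n+1}$ with $\nu(g^M)>-\eta_{n+1}$, where $\eta_{n+1}$ is the constant from the global pseudolocality Proposition \ref{global pseudolocality}; (ii) apply Proposition \ref{global pseudolocality} to obtain an immortal type III solution $(\mathbb{R}^{n+1},g_t^M)$; (iii) extract a Ricci flow coming out of $C(N)$ by a blow-down of $g_t^M$.

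\textbf{Step 1 (constructing the smoothing).} Since $(N,g^N)\times \mathbb{R}^2$ has positive isotropic curvature, Brendle--Schoen's convergence theorem implies that the Ricci flow $(\tilde g_s^N)_{s\in[0,s_\infty)}$ issued from $g^N$ shrinks to a round point in finite time, with the renormalized flow $\hat g_s^N := (s_\infty-s)^{-1}\tilde g_s^N$ converging smoothly to the round metric on $\mathbb{S}^n$. I would use this renormalized flow to build a warped product metric of the shape
\begin{equation*}
g^M = dr^2 + h(r)^2\, \hat g^N_{s(r)}
\end{equation*}
on $\mathbb{R}^+\times \mathbb{S}^n$, with a smooth choice of $(h,s)$ that (a) equals $(r,0)$ for $r\geqslant r_0$, so that $g^M$ coincides with $dr^2+r^2g^N$ away from the tip; (b) at $r=0$ reaches the round sphere of some small radius so that the metric extends smoothly across the origin into $\mathbb{R}^{n+1}$; (c) is monotone enough that in the interpolation region the scalar and the Ricci curvatures are controlled by those of the round sphere (up to errors that vanish as $\beta_1,\beta_2\to 1$). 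Note that the $C^0$-pinching and the scalar curvature lower bound in $P(\beta_1,\beta_2)$ are precisely what forces $(N,g^N)$ to satisfy the pointwise perturbation condition $L(\epsilon_1,\epsilon_2,\epsilon_3)$ introduced in the corollary of Section 2.3.2, with $\epsilon_i\to 0$ as $(\beta_1,\beta_2)\to (1,1)$.

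\textbf{Step 2 ($\nu$-bound and pseudolocality).} I would then estimate $\nu(g^M)$ by splitting the manifold into the cone part $\{r\geqslant r_0\}$ and the cap $\{r\leqslant r_0\}$. On the cone part, the separation-of-variables formula \eqref{separation} and the weighted log-Sobolev/Hardy inequalities used in the proof that $\lambda^N>(n-1)\Rightarrow \nu^{C(N)}>-\infty$ apply verbatim to give a bound depending only on $L(\epsilon_1,\epsilon_2,\epsilon_3)$, hence going to $0$ as the $\epsilon_i$ do. On the smoothed cap, since $(\hat g^N_s)$ stays close to the round sphere, the warped product metric is close to Euclidean, so the contribution of a minimizer restricted to the cap is controlled by the Euclidean $\nu$-functional, which is $0$. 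Combining the two yields $\nu(g^M)\geqslant -\Psi(\beta_1,\beta_2)$ with $\Psi\to 0$ as $(\beta_1,\beta_2)\to (1,1)$. Choosing $\beta_1(n), \beta_2(n)$ so that $\Psi<\eta_{n+1}(\alpha)$ for some fixed $\alpha>0$ makes Proposition \ref{global pseudolocality} applicable, producing an immortal type III Ricci flow $(\mathbb{R}^{n+1},g_t^M)_{t\in[0,\infty)}$ with $|\Rm(g_t^M)|\leqslant \alpha/t$.

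\textbf{Step 3 (blow-down to a flow out of the cone).} Using the type III bound and the asymptotic conicity at infinity of $g^M$, I would form the blow-down sequence $g_t^{(k)} := k\,g^M_{t/k}$, which is again a Ricci flow on $\mathbb{R}^{n+1}$ with $|\Rm(g_t^{(k)})|\leqslant \alpha/t$. Hamilton's compactness theorem, together with a noncollapsing estimate coming from the uniform lower bound on $\nu$ (which is scale invariant and transported under the flow by the monotonicity of $\mu$), yields a subsequential smooth limit $(\mathbb{R}^{n+1}\setminus\{0\},g_t^\infty)_{t>0}$. Because each $g^M$ is asymptotic to $C(N)$, the spatial rescalings $(\mathbb{R}^{n+1},k g^M)$ converge in pointed Gromov--Hausdorff sense to $C(N)$; combined with the continuity of the flow this forces $g_t^\infty\to dr^2+r^2 g^N$ smoothly on $\mathbb{R}^{n+1}\setminus\{0\}$ and in Gromov--Hausdorff sense globally as $t\to 0$, which is precisely the definition of a Ricci flow coming out of $C(N)$. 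Stability at all later times follows from the fact that each $g_t^{(k)}$ remains asymptotic to $C(N)$, a property inherited by the limit.

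\textbf{Main obstacle.} The serious difficulty is Step 1--2: showing that the smoothing of the tip can be carried out without destroying the $\nu$ lower bound coming from the cone corollary. The cone estimate in Section 2.3.2 crucially exploits the scale invariance of $C(N)$ and a sharp weighted Hardy inequality on $\mathbb{R}^+$; under smoothing, the Hardy inequality must be replaced by a local analogue on the cap, and one must keep sufficiently tight control on the warping function $h(r)$ and on how fast $\hat g^N_{s(r)}$ converges to the round sphere to ensure the overall loss in $\nu$ tends to $0$ with $(\beta_1,\beta_2)\to (1,1)$. The role of the renormalized Ricci flow on the link is precisely to give a canonical such interpolation with effective bounds.
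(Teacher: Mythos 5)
Your overall strategy coincides with the paper's: run a renormalized Ricci flow on the link to interpolate between $g^N$ at infinity and the round sphere at the tip, verify that the resulting smoothing of the cone has $\nu>-\eta_{n+1}$ by using the separation-of-variables / weighted Hardy machinery from Section~2, apply the global pseudolocality proposition, and finally blow down. The warped-product ansatz you write ($dr^2 + h(r)^2\hat g^N_{s(r)}$, exactly conical for $r\geqslant r_0$) differs cosmetically from the paper's $g^\delta=dr^2+r^2\tilde g(\delta/r^2)$, but it serves the same purpose.

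The genuine gap is in what you call the ``main obstacle,'' and I think you have slightly misidentified its nature. The difficulty is not primarily about the \emph{rate} at which $\hat g^N_s$ converges to the round sphere, nor about choosing $h$ and $s$ carefully. It is that you must show the estimates encoded in $P(\beta_1,\beta_2)$ --- the $C^0$-pinching $\beta_1'^2 g^{\mathbb{S}^n}\leqslant \hat g^N_s\leqslant \beta_2'^2 g^{\mathbb{S}^n}$ and the scalar curvature lower bound $R\geqslant n(n-1)/\beta_2'^2$, with $\beta_i'\to 1$ as $\beta_i\to 1$ --- are preserved \emph{uniformly for all} $s\in[0,\infty)$ along the renormalized flow. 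Brendle--Schoen's convergence theorem, which you invoke, only gives asymptotic information (the flow eventually becomes round); it does not rule out the a priori possibility that for intermediate $s$ the renormalized metric becomes badly pinched or its minimal scalar curvature drops, which would destroy the $L(\epsilon_1,\epsilon_2,\epsilon_3)$ lower bound on $\mathcal W^{\hat g^N_s}$ at precisely those times and hence destroy the $\nu$-bound on the warped smoothing (since your ansatz samples $\hat g^N_{s}$ for all $s$). This is the content of the paper's Proposition ``\emph{Prop preservée}.'' Its proof is not soft: it hinges on the almost-rigidity theorem of Bamler--Maximo \cite{BM}, which shows that a lower scalar curvature bound combined with an existence time near the maximal allowed value forces the flow to stay $C^0$-close to a shrinking round sphere, together with Simon's $C^0$-perturbation theory \cite{Sim} to show the existence time of $g^N$ is indeed near maximal, and Huisken's pinching \cite{hui} to propagate the scalar curvature lower bound. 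In fact, the specific and ``artificial-looking'' hypotheses in $P(\beta_1,\beta_2)$ (PIC~$\times\mathbb{R}^2$, the exact constant $n(n-1)/\beta_2^2$) are chosen precisely so that Bamler--Maximo applies; your sketch does not explain why those particular hypotheses appear, because it never engages with this mechanism. Without this preservation lemma, Steps~1--2 do not close, and that is the real crux of the theorem.
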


\indent These are quite artificial conditions that are just chosen in order to be able smooth out the cone by a manifold of high $\nu$-functional thanks to an adaptation of the proof of the lower bound for the $\mathcal{W}$-functional for perturbations of the sphere.
\\

 The proof of this theorem is the goal of the section.

\subsection{Smoothing out cones while controlling the $\nu$-functional}
Here we present the first step of the proof : smoothing out the cone by a smooth manifold of high $\nu$-functional. This is the section in which we use the quite artificial assumptions on the link.
\\

Let us start the proof by proving that the condition we asked for is preserved along a renormalization of Ricci flow with uniform worse constants.
\begin{defn}
We will call $(\tilde{g}_t)_t$ a \textit{renormalization of the Ricci flow} on $N$ if there exists a time depending constant $\alpha(t)$ such that :
$$\partial_t\tilde{g} = -2\Ric(\tilde{g}) + \alpha \tilde{g}.$$
\end{defn}

\begin{prop}\label{Prop preservée}
For all $N$ satisfying $P(\beta_1,\beta_2)$, there exists a renormalization of the Ricci flow $(\tilde{g}(t))_{t\in[0,+\infty)}$ on $\mathbb{S}^n$ starting at $g^N$ that exists for all times and such that as the time tends to $\infty$, $$\tilde{g_t}\to g^{\mathbb{S}^n}.$$

 Moreover, for all $t>0$, $\tilde{g}_t$ satisfies $P(\beta_1',\beta_2')$ for some other $\beta'_1$ and $\beta'_2$ depending only on the initial $\beta_1$ and $\beta_2$ and the dimension.
\\

 Moreover, as $\beta_1$ and $\beta_2$ tend to $1$, $\beta'_1$ and $\beta'_2$ also tend to $1$.
\end{prop}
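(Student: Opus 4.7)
I would invoke the convergence theorem of Brendle for the Ricci flow under the assumption that $(N,g^N)\times\mathbb{R}^2$ has positive isotropic curvature. Starting from the unnormalized Ricci flow $(g(s))_{s\in[0,T_{\max})}$ with initial datum $g^N$, this assumption implies $T_{\max}<+\infty$, that the flow contracts smoothly to a round point, and that the parabolic rescaling at the singular time converges in $C^\infty$ to a round metric on $S^n$. The renormalization $\tilde g_t$ is then built by choosing a time reparametrization $s:[0,+\infty)\to[0,T_{\max})$ and a scale factor $\lambda(t)>0$ so that $\tilde g_t=\lambda(t)^2\,g(s(t))$ exists for all $t\in[0,+\infty)$ and $\tilde g_t\to g^{\mathbb{S}^n}$ in $C^\infty$ as $t\to+\infty$. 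A convenient concrete choice is to impose that the average scalar curvature of $\tilde g_t$ stays equal to $n(n-1)$; the resulting flow then satisfies $\partial_t\tilde g=-2\Ric(\tilde g)+\alpha(t)\tilde g$ for a well-defined $\alpha(t)$.

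\textbf{Preservation of $P(\beta_1',\beta_2')$.} The positive isotropic curvature condition on $(\,\cdot\,)\times\mathbb{R}^2$ is preserved along the unnormalized Ricci flow by Brendle's invariant-cone arguments and is scale invariant, hence preserved by $\tilde g_t$. The $C^0$-closeness follows from precompactness of the orbit $\{\tilde g_t\}_{t\geqslant 0}$ in $C^\infty$: smooth dependence on initial data handles bounded $t$, and convergence to $g^{\mathbb{S}^n}$ handles large $t$. This gives uniform constants $\beta_1'\leqslant\beta_1$ and $\beta_2'\geqslant\beta_2$ with $\beta_1'^{\,2}g^{\mathbb{S}^n}\leqslant\tilde g_t\leqslant\beta_2'^{\,2}g^{\mathbb{S}^n}$ for all $t$. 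For the third condition, I would combine the evolution $\partial_t\R=\Delta\R+2|\Ric|^2-\alpha(t)\R$ with Hamilton's minimum principle and the pinching estimates underlying Brendle's theorem (under which $\Ric$ is forced toward a multiple of the metric while $\R$ remains strictly positive) in order to show that $\R(\tilde g_t)\geqslant n(n-1)/\beta_2'^{\,2}$ is maintained.

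\textbf{Uniformity as $(\beta_1,\beta_2)\to(1,1)$.} In this limit the initial metric $g^N$ approaches $g^{\mathbb{S}^n}$ in $C^0$; continuous dependence of the Ricci flow on its initial data together with the quantitative form of Brendle's convergence theorem ensures that the whole orbit $\{\tilde g_t\}$ lies in a $C^0$-neighborhood of $g^{\mathbb{S}^n}$ whose size vanishes with $|\beta_i-1|$, and similarly $\R(\tilde g_t)$ stays uniformly close to $n(n-1)$. Hence $\beta_1',\beta_2'\to 1$.

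\textbf{Main obstacle.} The most delicate point is the third condition, which is quantitatively sharp rather than purely qualitative: it is tied to the precise radius $\beta_2'$ of the comparison round sphere. This forces a careful coupling between the choice of the renormalization factor $\alpha(t)$ and the $C^0$-upper bound $\beta_2'$, so that the pinching estimate is strong enough to yield the asymmetric lower bound $\R(\tilde g_t)\geqslant n(n-1)/\beta_2'^{\,2}$ at every time, and not merely a qualitative positivity or a weaker pinched inequality.
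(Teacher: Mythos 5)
Your overall strategy (run the flow, use invariance of PIC$\times\mathbb{R}^2$, renormalize, and use pinching to get the lower scalar curvature bound) aligns with the paper's, but there is a genuine gap in how you obtain the uniformity of $\beta_1',\beta_2'$ across the whole class of initial metrics. The proposition is not merely asserting that each individual flow $\tilde g_t$ eventually stays $C^0$-close to the round sphere; it requires constants $\beta_1',\beta_2'$ that depend \emph{only} on $(\beta_1,\beta_2,n)$, uniformly over all $g^N$ satisfying $P(\beta_1,\beta_2)$. Your argument via ``precompactness of the orbit in $C^\infty$'' and ``smooth dependence on initial data'' is intrinsically non-uniform: the set of initial metrics with $P(\beta_1,\beta_2)$ carries no a priori bounds on higher derivatives of $g^N$, so it is not precompact in a topology that would let a continuity-plus-compactness argument produce constants independent of $g^N$. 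In particular, any appeal to ``continuous dependence on initial data'' for bounded times gives constants that implicitly depend on $C^k$-norms of $g^N$, which the hypotheses do not control.

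The paper circumvents this by bringing in two quantitative inputs you do not use. First, it invokes Simon's $C^0$-perturbation result to control the existence time $T_N$ of the unnormalized flow purely from the $C^0$-closeness $\beta_1^2g^{\mathbb{S}^n}\leqslant g^N\leqslant\beta_2^2g^{\mathbb{S}^n}$, so that $T_N$ is close to the extinction time of the round sphere with constants depending only on $(\beta_1,\beta_2)$. Second, and crucially, it uses the quantitative almost-rigidity theorem of Bamler--Maximo (with input PIC$\times\mathbb{R}^2$, a lower bound on $\R$, and an existence time close to maximal) to show that after an arbitrarily short time the sectional curvatures are very pinched about the scalar curvature, with pinching error controlled only by $(\beta_1,\beta_2,n)$; this pinching is then propagated by Huisken. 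The lower bound $\R^{\bar N(t)}_{\min}\geqslant n(n-1)/\beta_2'^{\,2}$ is extracted from this pinching via $\partial_t\R=\Delta\R+2|\Ric|^2$ and the near-equality $|\Ric|^2\approx\R^2/n$. You correctly flag the sharp lower bound on $\R$ as the delicate point, but the more fundamental gap is upstream: without a quantitative replacement for Bamler--Maximo and Simon, neither the $C^0$-closeness nor the scalar-curvature bound comes out uniformly in the class $P(\beta_1,\beta_2)$.
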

\begin{proof}
This result relies completely on the proof of the article \cite{BM} by Bamler and Maximo and on the $C^0$ perturbation of Ricci flows presented in \cite{Sim} :

 Let us first note that, by \cite{bs}, the positive isotropic curvature when crossed with $\mathbb{R}^2$ is preserved by Ricci flow and that the convergence towards $\left(\mathbb{S}^n,T_N\times2(n-1)g^{\mathbb{S}^n}\right)$ by the flow defined in the next lemma is exponentially fast in every derivative.

\paragraph{1) A lower bound on the scalar curvature}
\begin{lem}
There exists $\beta_1'(\beta_1,\beta_2)$ and $\beta_2'(\beta_1,\beta_2)$ which tends to $1$ as $(\beta_1,\beta_2)\to (1,1)$ such that if $(\mathbb{S}^n,g^N)$ satisfies $P(\beta_1,\beta_2)$, 

then, defining $\bar{g}(t)$ by :
$$
\left\{
\begin{array}{ll}
\bar{g}(0)=g^N,\\
\partial_t\bar{g} = -2\Ric(\bar{g})+\frac{1}{2T_N}\bar{g},
\end{array}
\right.
$$
where $T_N$ is the existence time of the flow.
\\

$ \bar{N}(t) = (\mathbb{S}^n,\bar{g}(t))$ satisfies $$R^{\bar{N}(t)}\geqslant \frac{n(n-1)}{{\beta_2'}^2}$$ along the flow.
\end{lem}
\begin{proof}
 Let us start by proving that the scalar curvature does not become to negative along the actual Ricci flow. 
\\

 Because of the $C^0$-closeness, the extinction time is close to that of the unit sphere by the work of \cite{Sim} because of the $C^0$-closeness, so we can use the results in \cite{BM} which deals with manifolds with positive isotropic curvature when crossed with $\mathbb{R}^2$ that have a lower bound on the scalar curvature as well as an existence time close to the maximum possible given the lower bound on the scalar curvature :

 To be more precise, we will need to look at the proof given in \cite{BM}.

 In the section "End of Proof of Theorem 1.1.". after an arbitrarily short time noted $t_2$ (depending only on the lower bound on the scalar curvature and on the existence time), the sectional curvatures are very pinched around a lower bound for the scalar curvature (with a factor $\frac{1}{n(n-1)}$). Thanks to \cite{hui}, this is a preserved property along the flow. 
\\

 This implies that $\frac{\R^2}{n}\leqslant|\Ric|^2\leqslant\frac{(1+\epsilon)\R^2}{n}$ and since the equation satisfied by the scalar curvature along a Ricci flow is $$\partial_t R = \Delta R + 2 |Ric|^2,$$ we can deduce that the product $\R^N_{min}\times (T_N-t)$ (where $T_N$ is the existence time of the flow) satisfies along a Ricci flow :
$$\frac{n}{2}-\mathcal{O}(\epsilon)\leqslant \R^N_{min}\times (T_N-t)\leqslant \frac{n}{2}$$
(the lower bound comes from a lower bound on the existence time thanks to the maximum of the scalar curvature).

 This bound implies that along the renormalized flow presented in the theorem, we have $\R^N_{min}\geqslant n(n-1)-\mathcal{O}(\epsilon)$ with $\epsilon$ a function of $(\beta_1,\beta_2)$ which tends to $0$ as $(\beta_1,\beta_2)$ tends to $(1,1)$.
\end{proof}

\paragraph{2) A bound on the $C^0$-closeness}

\begin{lem}
	There exists $\beta_1'(\beta_1,\beta_2)$ and $\beta_2'(\beta_1,\beta_2)$ which tends to $1$ as $(\beta_1,\beta_2)\to (1,1)$ such that if $(\mathbb{S}^n,g^N)$ satisfies $P(\beta_1,\beta_2)$, 
	
	then defining $\bar{g}(t)$ by :
	$$
	\left\{
	\begin{array}{ll}
	\bar{g}(0)=g^N,\\
	\partial_t\bar{g} = -2\Ric(\bar{g})+\frac{1}{2T_N}\bar{g},
	\end{array}
	\right.
	$$
	where $T_N$ is the existence time of the flow.
	\\
	
	$ \bar{N}(t) = (\mathbb{S}^n,\bar{g}(t))$ satisfies $${\beta_1'}^2 g^{\mathbb{S}^n}  \leqslant \bar{g}(t)\leqslant {\beta_2'}^2 g^{\mathbb{S}^n} $$ along the flow.
\end{lem}

This is actually a direct application of the theorem 1.1 of \cite{BM} : Since we have a lower bound on $\R_{min}(t)\times (T_N-t)$ we have the $C^0$-closeness preserved by scaling (with a potentially worse constant).

\paragraph{3) Converging to the unit sphere}
Now, in the statement of the proposition, we asked for the flow to converge to the unit sphere and not one of radius $\sqrt{T_N\times 2(n-1)}$. It is possible to do by just scaling $g^N$ at the end of the flow where it is $C^3$-close to a sphere as it is presented in the appendix B.2. It is possible to do this without changing the constant $\beta_1'$ and $\beta_2'$ as $\bar{g}$ is $C^{2}$-close to $T_N\times 2(n-1)$. 

We will name $\tilde{g}_t$ the resulting flow.
\\
\end{proof}

Let us now use the proposition \ref{Prop preservée} to smooth out the cone over $(N,g^N)=(\mathbb{S}^n,g^N)$ thanks to a manifold of the form : 
$$(\mathbb{R}^+\times \mathbb{S}^n,dr^2+r^2\hat{g}(r)),$$
and prove that for a well chosen $\hat{g}(r)$, the $\nu$-functional is large. 

We will actually choose $$\hat{g}(r) = \tilde{g}(\phi(r))$$ for a $\phi:\bar{\mathbb{R}}^+\to\bar{\mathbb{R}}^+$. We want the manifold to be smooth, that is to have $\hat{g}(r)$ asymptotic to the unit sphere for $r$ close to $0$, so $\phi(0) = +\infty$ and we also want the manifold to be asymptotic to $C(N)$, that is $\phi(+\infty) = 0$. To have good estimates coming from the work we have already done on cones, we will consider such a function with slow enough variations.

\begin{lem}
For all $\delta$, the manifold $(M,g^\delta):=\left(\mathbb{R}^+\times \mathbb{S}^n,dr^2 + r^2 \tilde{g}(\frac{\delta}{r^2})\right)$ smoothes out the cone $C(N)$ and for $\delta_0(N)>0$ small enough, we have $\nu^M(g^{\delta_0})>-\eta_n$.

 In other words, it is possible to smooth out the cone by a manifold that satisfies the assumption of the pseudolocality result of the previous section.
\begin{rem}
A small $\delta$ corresponds to small variations of the link to make it look constant locally and thus look locally like a cone from the point of view of the $\mathcal{W}$-functional (it is constant if and only if $(M,g^{\delta})$ is a cone).
\end{rem}
\begin{proof}
Let us consider a manifold $(M,g^\delta)=(\mathbb{R}^+\times \mathbb{S}^n,dr^2+r^2\tilde{g}(\frac{\delta}{r^2}))$.

\begin{lem}
For all $t$, $\tilde{g}_t$ satisfies a lower bound $L(\epsilon_1,\epsilon_2,\epsilon_3)$ for some $\epsilon_i$ tending to zero as $\beta_i\to 0$.
\end{lem}
\begin{proof}
Manifolds satisfying $P(\beta_1',\beta_2')$ for $(\beta_1',\beta_2')$ close enough to $(1,1)$ also satisfy $L(\epsilon_1,\epsilon_2,\epsilon_3)$ for small $\epsilon_i$ thanks to the appendix A.4.
\end{proof}
Now, thanks to the appendix A.3, such a manifold $(M,g^\delta)$ can be treated as a cone over a a manifold satisfying a lower bound $L(\epsilon_1+\mathcal{O}(\delta),\epsilon_2,\epsilon_3)$ (in the sense that its $\nu$-functional is bounded from below by the same $\Psi(\epsilon_1+\mathcal{O}(\delta),\epsilon_2,\epsilon_3)$).
\end{proof}
\end{lem}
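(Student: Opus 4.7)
The lemma contains two assertions. For the first, that $(M,g^\delta)$ smooths out the cone $C(N)$, the plan is to read off the behavior of $\hat g(r) := \tilde g(\delta/r^2)$ at the two ends. As $r\to 0$ we have $\delta/r^2 \to +\infty$, and by Proposition \ref{Prop preservée} the renormalized flow $\tilde g_t$ tends (exponentially in every derivative, once the Brendle--Schoen regime is reached) to $g^{\mathbb{S}^n}$, so near $r=0$ the warped metric $dr^2 + r^2\hat g(r)$ extends smoothly across the origin to $\mathbb{R}^{n+1}$. As $r\to +\infty$ we have $\delta/r^2\to 0^+$ and $\hat g(r)\to \tilde g(0)=g^N$, giving the smoothly-asymptotic-to-$C(N)$ condition in the sense of the definition above.

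For the $\nu$-bound, my plan is to combine three ingredients. First, I would verify that each slice $\tilde g_t$ satisfies the hypothesis $L(\epsilon_1,\epsilon_2,\epsilon_3)$ from the corollary on cones over perturbations of the unit sphere, with parameters $\epsilon_i$ that shrink to $0$ as $(\beta_1,\beta_2)\to (1,1)$. This is morally a stability statement: the sphere's sharp log-Sobolev inequality persists, with small quantitative loss, under the $C^0$-closeness $P(\beta_1',\beta_2')$ and the scalar curvature lower bound, both of which are preserved along the renormalized Ricci flow by Proposition \ref{Prop preservée}. Second, I would plug the warped metric $dr^2 + r^2\hat g(r)$ into the separation-of-variables formula \eqref{separation} and track how the expression differs from the cone case. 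The only new contributions come from the $r$-dependence of $\hat g$; since $\partial_r(\delta/r^2) = -2\delta/r^3$ carries an overall factor of $\delta$, every correction term in the $\mathcal{W}$-functional is of order $\mathcal{O}(\delta)$. This is the warped-product comparison of Appendix A.3 referenced above, and it allows one to replace $\hat g(r)$ by a fixed $\tilde g_t$ at the price of shifting $\epsilon_1 \mapsto \epsilon_1 + \mathcal{O}(\delta)$.

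Third, I would then invoke the corollary giving $\nu^{C(\cdot)} \geq -\Psi(\epsilon_1,\epsilon_2,\epsilon_3)$ with $\Psi\to 0$ as the arguments do, applied with the shifted parameters. Choosing $\beta_1,\beta_2$ close enough to $1$ so that the $\epsilon_i$ are small, and then $\delta_0$ small enough so that $\epsilon_1 + \mathcal{O}(\delta_0)$ stays in the range where $\Psi < \eta_n$, produces $\nu^M(g^{\delta_0}) > -\eta_n$. The main obstacle is the second step: quantitatively comparing the $\mathcal{W}$-functional of a warped product with slowly varying link to that of a genuine cone, that is, controlling the extra Bakry--Émery type cross-terms generated by $\partial_r \hat g$ uniformly in the test function $f$ and in the scale $\tau$. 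Without this uniform control, one only recovers Gromov--Hausdorff closeness to $C(N)$, which is not enough to transfer a $\nu$-functional bound.
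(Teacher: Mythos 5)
Your proposal follows the paper's own proof step by step: first the Appendix A.4 input (each slice $\tilde g_t$ satisfies $L(\epsilon_1,\epsilon_2,\epsilon_3)$, with $\epsilon_i\to 0$ as $(\beta_1',\beta_2')\to(1,1)$, which is preserved along the renormalized flow), then the Appendix A.3 warped-product comparison (the $\mathcal{W}$-functional of $g^\delta$ differs from the genuine cone expression by an error of size $\mathcal{O}(\delta)$, absorbed as $\epsilon_1\mapsto\epsilon_1+\mathcal{O}(\delta)$), then the corollary $\nu^{C(N)}\geqslant-\Psi(\epsilon_1,\epsilon_2,\epsilon_3)$ with $\Psi\to 0$, and you also make explicit the smoothness-at-the-tip and asymptotically-conical checks that the paper leaves implicit. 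This is essentially the paper's argument.

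One imprecision worth noting: you attribute the $\mathcal{O}(\delta)$ size of the correction term to the fact that $\partial_r(\delta/r^2)=-2\delta/r^3$ carries a factor of $\delta$. That heuristic is too weak on its own, because $\theta'=-2\delta/r^3$ and $\theta''=6\delta/r^4$ blow up as $r\to 0$, so the $\delta$ prefactor does not by itself tame the correction near the tip. The mechanism actually used in Appendix A.3 is the balance between this polynomial blow-up and the exponential decay $|\partial_t^k\hat g|\leqslant C_k e^{-c_k t}$ of the renormalized-flow derivatives evaluated at $t=\delta/r^2\to\infty$. You do invoke that exponential decay in your first paragraph (for smoothness at the origin), and it is the same ingredient that should be cited here as the reason the error stays controlled uniformly in $r$, $f$, and $\tau$.
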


\subsection{A type III immortal solution of the Ricci flow asymptotic to the cone}
Thanks to the last proposition, for all $N$ satisfying the assumptions of the theorem, then it is possible to construct a manifold asymptotic to the cone $C(N)$ 
\\

\begin{prop}
For $N$ satisfying the set of properties $P(\beta_1,\beta_2)$, the Ricci flow starting at $(M,g^{\delta_0})$ is an immortal type III solution of the Ricci flow. It is moreover asymptotic to the cone $C(N)$ at all times.
\end{prop}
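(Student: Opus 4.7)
The plan is to deduce the immortality and type III property directly from the preceding lemma and the global pseudolocality of Proposition~\ref{global pseudolocality}, and then establish the asymptotic-to-cone property at each time by a parabolic rescaling argument that exploits the scale-invariance of $C(N)$.

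\emph{Immortality and type III.} Fix $\alpha > 0$ and let $\eta_n(\alpha)$ be as in Proposition~\ref{global pseudolocality}. The previous lemma provides a $\delta_0 > 0$ small enough that $\nu^M(g^{\delta_0}) > -\eta_n(\alpha)$, hence $\mu(g^{\delta_0}, \tau) > -\eta_n(\alpha)$ for every $\tau > 0$. The initial metric is smooth with curvature bounded near $r = 0$ (where it is close to the round sphere) and $O(1/r^2)$ at infinity, so short-time existence of the Ricci flow is standard. The monotonicity of $\mu$ at fixed time-remaining $s$ gives $\mu(g_t, s) \geq \mu(g_0, s + t) > -\eta_n(\alpha)$ for all $t, s > 0$, so the hypothesis of Proposition~\ref{global pseudolocality} continues to hold on every interval $[0, T]$. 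This yields $|Rm(g_t)| \leq \alpha/t$, and since curvature then stays bounded on every finite interval, the flow is immortal and type III.

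\emph{Asymptotic structure via parabolic rescaling.} Fix $t_0 > 0$. Let $\Phi_\lambda : \mathbb{R}^+ \times \mathbb{S}^n \to \mathbb{R}^+ \times \mathbb{S}^n$ be the dilation $(r, x) \mapsto (\lambda r, x)$ and set
\[
\tilde{g}^\lambda_\tau := \lambda^{-2} \Phi_\lambda^* g_{\lambda^2 \tau}, \qquad \tau \geq 0.
\]
Each $\tilde{g}^\lambda$ is a Ricci flow with $|Rm(\tilde{g}^\lambda_\tau)| \leq \alpha/\tau$ by scale-invariance of the type III bound, and its initial data is
\[
\tilde{g}^\lambda_0 = dr^2 + r^2 \tilde{g}\!\left(\frac{\delta_0}{\lambda^2 r^2}\right),
\]
which converges smoothly on compact subsets of $C(N) \setminus \{0\}$ to $g^{C(N)} = dr^2 + r^2 g^N$ as $\lambda \to \infty$, using that $\tilde{g}(s) \to g^N$ as $s \to 0^+$. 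Using the classical Perelman pseudolocality on balls of fixed radius in the rescaled coordinates (where $\tilde{g}^\lambda_0$ is arbitrarily close to a region of the cone, itself close to Euclidean thanks to $P(\beta_1, \beta_2)$) together with Shi's estimates provides uniform curvature and derivative bounds on short time intervals near $\tau = 0$. Hamilton's compactness then extracts a subsequential limit Ricci flow $\tilde{g}^\infty$ on $C(N) \setminus \{0\}$ with initial value $g^{C(N)}$. Setting $\tau_\lambda = t_0 / \lambda^2 \to 0$ gives
\[
\lambda^{-2} \Phi_\lambda^* g_{t_0} = \tilde{g}^\lambda_{\tau_\lambda} \longrightarrow \tilde{g}^\infty_0 = g^{C(N)}
\]
smoothly on compact subsets of $C(N) \setminus \{0\}$, which is precisely the statement that $(M, g_{t_0})$ is smoothly asymptotic to $C(N)$ at infinity.

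\emph{Main obstacle.} The delicate point is obtaining uniform derivative control up to $\tau = 0$ in the rescaled flows, since the type III bound $|Rm| \leq \alpha/\tau$ alone degenerates there. The key is the localized Perelman pseudolocality: at any rescaled radius $r$ in a compact range $[R^{-1}, R]$ and for $\lambda$ large, the ball of radius comparable to $r$ in $\tilde{g}^\lambda_0$ is close to a Euclidean ball (with quality governed by $P(\beta_1, \beta_2)$ and by how close $\delta_0 / (\lambda^2 r^2)$ is to $0$), so pseudolocality yields a uniform bound $|Rm| \leq C$ on a definite short time interval. Shi estimates then give all higher derivatives, Hamilton compactness applies up to $\tau = 0$, and the continuity of the limit at $\tau = 0$ closes the argument.
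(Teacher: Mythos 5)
Your treatment of immortality and the type III bound matches the paper exactly: both invoke the global pseudolocality proposition together with $\nu^M(g^{\delta_0}) > -\eta_n$, and the $\mu$-monotonicity you insert is already implicit in the proof of that proposition.

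For the asymptotic-to-cone statement you take a genuinely different route: where the paper simply cites Lott--Zhang \cite{lz} (Sections 5.1--5.2) for this fact, you supply a direct parabolic-rescaling argument. Your scheme --- rescale by $\tilde g^\lambda_\tau := \lambda^{-2}\Phi_\lambda^* g_{\lambda^2\tau}$, observe $\tilde g^\lambda_0 \to g^{C(N)}$ smoothly away from the tip, control short time with localized pseudolocality and Shi, extract a limit flow by Hamilton compactness, and evaluate at $\tau_\lambda = t_0/\lambda^2 \to 0$ --- is essentially the argument underlying the cited reference, and it is a sound outline. What each approach buys: the paper's citation is terse but leans on a black box; your version is self-contained and makes explicit that the cone's scale-invariance is what turns ``asymptotic at $t=0$'' into ``asymptotic for all $t$.'' Two technical points deserve more care before this can be called complete. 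First, the bound pseudolocality delivers, $|Rm| \leqslant \alpha/\tau + (\epsilon r_0)^{-2}$, still degenerates as $\tau\to 0$, so it does not by itself give the uniform-in-$\lambda$ curvature bound on a compact time interval $[0,T]$ that Hamilton compactness wants; you need to combine the bounded curvature of $\tilde g^\lambda_0$ (uniform on compact annuli, coming from the smooth convergence of the initial data) with a doubling-time or local parabolic estimate to get a bound $|Rm|\leqslant C$ on a definite interval near $\tau=0$, uniform in $\lambda$. Second, Hamilton compactness only yields a subsequential limit; to conclude $\tilde g^\lambda_{\tau_\lambda} \to g^{C(N)}$ for the full sequence one must argue that every subsequential limit is the same, which here follows from the uniform bound on $\partial_\tau\tilde g^\lambda$ near $\tau=0$ (so that $\tilde g^\lambda_{\tau_\lambda}$ and $\tilde g^\lambda_0$ are asymptotically indistinguishable), but this should be said. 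Also note that ``close to Euclidean thanks to $P(\beta_1,\beta_2)$'' is not quite right as stated: $P$ only gives $C^0$-closeness of the link, while the isoperimetric hypothesis in pseudolocality is a small-scale flatness statement; what you actually use is that $g^{C(N)}$ is a fixed smooth metric away from the tip, hence locally close to Euclidean at small enough scale, uniformly on compact annuli.
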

\begin{proof}
The existence of the type III solution starting at $(M,g)$ is a direct consequence of the pseudolocality result presented in the previous section, because for $(\beta_1,\beta_2)$ close enough to $(1,1)$, we have :
$$\nu^M(g)>\eta_{n+1}.$$

\indent The fact that the flow stays asymptotic to the cone can be found in \cite{lz}.
\end{proof}
\subsection{Construction of a type III immortal solution coming out of the cone by a blow down process}
From the previous results, we have a type III solution of the Ricci flow. A standard process to get an idea of the long time behavior of such a Ricci flow is to "blow down" the flow by parabolic scaling. 
\\

 Thanks to the results in the appendix D, we can take a sublimit of the blow downs of this Ricci flow into an immortal type III solution of the Ricci flow coming out of a cone. Which is the statement of the theorem.

\begin{rem}
Note that if we had a limit instead of a sublimit, we could argue that the limiting flow is an expanding soliton. We could expect that such a Ricci flow would be a soliton.

 For example, if we can ensure the positivity of the Ricci curvature along the renormalized flow, then a direct application of the theorem 1 of \cite{Ma} ensures that we have a gradient expanding soliton. It could be possible to use the same techniques in our case, this would require to get controls of the heat kernels on manifolds satisfying a log sobolev inequality (and a lower bound on the scalar curvature). In particular, assuming that the link $N$ also has a curvature tensor larger than that of the unit sphere, we obtain an expanding soliton.
\end{rem}

\newpage
\appendix
\section{Proof of some lemmas}
Let us give the proof of some technical and not particularly enlightening lemmas needed for some proofs in the text
\subsection{Upper semicontinuity of $\tau\mapsto\mu(g,\tau)$}
\begin{lem}
$\tau\mapsto\mu(g_0,\tau)$ when $\lambda^N>-\infty$ is upper semicontinuous.
\\
\indent More precisely,
\begin{align*}
\lim_{\tau_2\to\tau_1}\left(\frac{\mu(g_0,\tau_1)-\mu(g_0,\tau_2)}{\tau_1-\tau_2}\right)&\geqslant \mathcal{F}(u_1)-\frac{n}{2\tau_1}\geqslant \lambda^N-\frac{n}{2\tau_1}.
\end{align*}
\begin{proof}
Let us consider $\tau_1>\tau_2>0$.
\\
\\
Let $\phi : N\to \mathbb{R}$ such that $\int_N \phi^2dv = 1$.
\\
For all $u$ function on the manifold, we have :
\begin{align*}
\mathcal{W}\left(\phi-\frac{n}{2}\log(4\pi\tau_1),g_0,\tau_1\right)-\mathcal{W}\left(\phi-\frac{n}{2}\log(4\pi\tau_2),g_0,\tau_2\right)&=(\tau_1-\tau_2)\mathcal{F}(\phi,g_0)-\frac{n}{2}\log\frac{\tau_1}{\tau_2}\\
&\geqslant (\tau_1-\tau_2)\lambda^N-\frac{n}{2}\log\frac{\tau_1}{\tau_2}.
\end{align*}
Now, if we consider $\phi_1$ for which $\mathcal{W}(\phi_1-\frac{n}{2}\log(4\pi\tau_1), g_0, \tau_1)=\mu(g_0,\tau_1)$ (or approximating it up to a $o(\tau_1-\tau_2)$ in the case where such a minimizer doesn't exist).
\begin{align*}
\mu(g_0,\tau_1)-\mathcal{W}\left(\phi_1-\frac{n}{2}\log(4\pi\tau_2),g_0,\tau_2\right)&\geqslant (\tau_1-\tau_2)\lambda^N-\frac{n}{2}\log\frac{\tau_1}{\tau_2}\\
\mu(g_0,\tau_1)&\geqslant \mathcal{W}\left(\phi_1-\frac{n}{2}\log(4\pi\tau_2),g_0,\tau_2\right)+ (\tau_1-\tau_2)\lambda^N-\frac{n}{2}\log\frac{\tau_1}{\tau_2}\\
\mu(g_0,\tau_1)&\geqslant \mu(g_0,\tau_2)+ (\tau_1-\tau_2)\lambda^N-\frac{n}{2}\log\frac{\tau_1}{\tau_2}.
\end{align*}

Now, since $(\tau_1-\tau_2)\lambda^N-\frac{n}{2}\log\frac{\tau_1}{\tau_2}$ tends to $0$ when $\tau_2$ tends to $\tau_1$, we have the wanted upper semicontinuity.
\end{proof}
\end{lem}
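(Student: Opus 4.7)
The plan is to exploit the reparametrization $\phi := f + \frac{n}{2}\log(4\pi\tau)$ introduced earlier, under which
$$\mathcal{W}(f,g_0,\tau) \;=\; \tau\,\mathcal{F}(\phi,g_0) + \mathcal{N}(\phi,g_0) - \tfrac{n}{2}\log(4\pi\tau) - n,$$
while the admissibility constraint $\int_N e^{-\phi}\,dv = 1$ is independent of $\tau$. Fixing $\phi$ and varying $\tau$ therefore makes the $\tau$-dependence completely explicit: formally, $\frac{d}{d\tau}\mathcal{W} = \mathcal{F}(\phi,g_0) - \frac{n}{2\tau}$, which is exactly the quantity on the right-hand side of the claimed inequality. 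This is the whole heuristic; the rest of the argument is a matter of turning the formal derivative into a one-sided inequality for the infimum $\mu(g_0,\tau)$.

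First I would pick a (near-)minimizer $\phi_1$ (equivalently $u_1$ in $u$-notation) of $\mathcal{W}(\cdot,g_0,\tau_1)$, so that
$$\mathcal{W}\bigl(\phi_1 - \tfrac{n}{2}\log(4\pi\tau_1),\,g_0,\,\tau_1\bigr) \;=\; \mu(g_0,\tau_1)$$
(or the same up to a prescribed $o(\tau_1-\tau_2)$ error if no minimizer exists), and $\mathcal{F}(\phi_1,g_0) \geqslant \lambda^N$. Second, I would plug the $\tau_2$-adjusted test function $\phi_1 - \frac{n}{2}\log(4\pi\tau_2)$ into $\mathcal{W}(\cdot,g_0,\tau_2)$ as a competitor for $\mu(g_0,\tau_2)$. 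Since $\phi_1$ is the same in both expressions, the $\mathcal{N}(\phi_1,g_0)$ terms cancel and
$$\mu(g_0,\tau_1) - \mu(g_0,\tau_2) \;\geqslant\; (\tau_1-\tau_2)\,\mathcal{F}(\phi_1,g_0) - \tfrac{n}{2}\log\tfrac{\tau_1}{\tau_2}.$$
Dividing by $\tau_1-\tau_2$ (handling the sign when $\tau_2 > \tau_1$) and sending $\tau_2 \to \tau_1$, the elementary limit $\frac{1}{\tau_1-\tau_2}\log\frac{\tau_1}{\tau_2} \to \frac{1}{\tau_1}$ delivers the stated bound $\mathcal{F}(\phi_1,g_0) - \frac{n}{2\tau_1}$, with the weaker bound $\lambda^N - \frac{n}{2\tau_1}$ following from $\mathcal{F}(\phi_1,g_0) \geqslant \lambda^N$. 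Upper semicontinuity of $\tau \mapsto \mu(g_0,\tau)$ is an immediate corollary.

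The hypothesis $\lambda^N > -\infty$ is used precisely to guarantee the finiteness of $\mathcal{F}(\phi_1,g_0)$ (and the existence on compact $N$ of a true minimizer as the principal eigenfunction of the associated Schrödinger operator $-4\Delta + \R + \phi/\tau$). The main — and only — delicate point is the one-sided nature of the inequality. The construction above directly controls the left quotient ($\tau_2 < \tau_1$). For the right limit the symmetric argument, using a near-minimizer $\phi_2$ at $\tau_2$ tested at $\tau_1$, yields
$$\mu(g_0,\tau_1) - \mu(g_0,\tau_2) \;\leqslant\; (\tau_1-\tau_2)\,\mathcal{F}(\phi_2,g_0) - \tfrac{n}{2}\log\tfrac{\tau_1}{\tau_2},$$
and dividing by the negative quantity $\tau_1-\tau_2$ reverses this into the same kind of lower bound but with $\mathcal{F}(\phi_2,g_0)$ in place of $\mathcal{F}(\phi_1,g_0)$. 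Since $\mathcal{F}(\phi_2,g_0) \geqslant \lambda^N$ unconditionally, the weaker final inequality $\liminf \geqslant \lambda^N - \frac{n}{2\tau_1}$ holds from both sides; obtaining the sharper value $\mathcal{F}(u_1,g_0)$ for the two-sided limit additionally requires a mild continuity-in-$\tau$ statement for (near-)minimizers, which I expect to be the only step needing care.
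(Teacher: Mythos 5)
Your proof is correct and follows essentially the same route as the paper's: fix a (near-)minimizer $\phi_1$ at $\tau_1$, test it at $\tau_2$, observe that the $\mathcal{N}$-terms cancel so that only $(\tau_1-\tau_2)\mathcal{F}(\phi_1,g_0)-\tfrac{n}{2}\log\tfrac{\tau_1}{\tau_2}$ survives, bound $\mathcal{F}$ from below by $\lambda^N$, and pass to the limit. Your observation that the right-hand quotient requires testing a near-minimizer at $\tau_2$ (and that upgrading $\lambda^N$ to $\mathcal{F}(u_1)$ for the two-sided limit needs continuity of minimizers in $\tau$) is a valid refinement of a point the paper's proof leaves implicit, but it does not constitute a different method.
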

\subsection{A sequence of minimizers of $\mathcal{W}(.,g,\tau)$ tends to a minimizer of $\mathcal{F}$ as $\tau\to\infty$}
\begin{prop}
For any compact manifold $N$, as $\tau_k$ tends to infinity, a sequence $u_k^2=\frac{e^{-f_k}}{(4\pi\tau_k)}$, where $f_k$ is a minimizer of $\mathcal{W}^N(.,g,\tau_k)$ tends to $u_\mathcal{F}$ in $H^1$, the minimizer of $\mathcal{F}^N(.,g)$. 
\begin{proof}
Let $\tau>0$, $f: N \to \mathbb{R}$.
\\
Noting $u^2 = e^{-\phi}$, where $\phi = f+\frac{n}{2}\log(4\pi\tau)$.
\\
\indent The expression of the entropy on $N$ is :
\begin{align}
\mathcal{W}^N\left(f,g,\tau\right) = \tau\mathcal{F}^N(\phi,g)-\int_Nu^2\log(u^2)dv-\frac{n}{2}\log{4\pi\tau}-n.\label{entrexp}
\end{align}
It is expected that a minimizer of $\mathcal{W}^N(.,g,\tau)$ becomes close to a minimizer of $\mathcal{F}^N(.,g)$ as $\tau$ tends to $\infty$ since the importance of the compensating term $-\int_Nu^2\log(u^2)dv$ becomes neglectible compared to the $\tau\mathcal{F}^N(\phi,g)$.
\paragraph{1) Let us bound the $u^2\log u^2$ term}
:
\\
There exists a $\tau_0$ such that, for all $u$ such that $\int_N u^2 = 1$ (this exists because the manifold is compact),
\begin{align*}
-\int_N u^2\log (u^2)dv &\geqslant -\tau_0\int_N4|\nabla u|^2dv.
\end{align*}
And, by Jensen inequality :
\begin{align}
0\geqslant-\int_N u^2\log (u^2)dr. \label{up bound log}
\end{align}
\\
\textbf{Let us consider a sequence $(u_k)_k$ of minimizers for $\tau_k\to+\infty$}
\\
From these two inequalities, can bound the $u^2\log (u^2)$ term and we get :
$$\int_N \left(4|\nabla u|^2+\R u^2\right)dv\geqslant\frac{\mathcal{W}^N(u,\tau_k)+\frac{n}{2}\log(4\pi\tau_k)+n}{\tau_k} \geqslant \int_N \left(\frac{(\tau_k-\tau_0)}{\tau_k}4|\nabla u|^2+\R u^2\right)dv,$$
and a minimizer of the middle term is also a minimizer of $\mathcal{W}$.
\paragraph{2) Let us prove that $u_k$ tends to a minimizer of $\mathcal{F}$ by comparing its $\mathcal{W}$-functional to that of $u_\mathcal{F}$, a minimizer of the $\mathcal{F}$ functional}
:
$$\mathcal{F}(u_\mathcal{F}) = \lambda^N.$$
\\
Since $u_k$ is a minimizer of $\mathcal{W}^N(.,\tau_k)$, we have :

\begin{align}
0 &\leqslant \mathcal{W}^N(u_\mathcal{F},\tau_k)-\mathcal{W}^N(u_k,\tau_k) \nonumber\\
&\leqslant \tau_k (\lambda^N-\mathcal{F}(u_k)) -\frac{\tau_0}{\tau_k}\int_N |\nabla u_k|^2dv. \label{W F}
\end{align}
Now, if $u_k$ doesn't approach a minimizing function for $\mathcal{F}^N$, then there exists $\epsilon>0$ such that, for all $k$ :
$$\lambda^N-\mathcal{F}(u_k)<-\epsilon,$$
and, as $\tau\to\infty$, the right term of \eqref{W F} tends to $-\infty$ (the other terms are negative or tend to $0$ as $k\to\infty$), which is a contradiction of the inequality.
\\

$u_k$ tends to a minimizer of $\mathcal{F}$ in $H^1$.
\end{proof}
\end{prop}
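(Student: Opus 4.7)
The plan is to exploit the decomposition
$$\mathcal{W}^N(f,g,\tau) = \tau \mathcal{F}^N(\phi,g) + \mathcal{N}^N(\phi,g) - \tfrac{n}{2}\log(4\pi\tau) - n,$$
where $\phi = f + \tfrac{n}{2}\log(4\pi\tau)$ and $u^2 = e^{-\phi}$, so $\mathcal{N}^N(\phi,g) = -\int_N u^2 \log u^2\, dv$. Since the Nash-entropy term does not scale with $\tau$ while the $\tau\mathcal{F}^N$ term does, one expects the minimizer of $\mathcal{W}^N(\cdot,g,\tau_k)$ to nearly minimize $\mathcal{F}^N$ when $\tau_k$ is large.

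The first step is to control the non-scaling term by the gradient term through a log-Sobolev inequality. Because $(N,g)$ is compact and smooth, there exist constants $\tau_0, C > 0$ depending only on $(N,g)$ such that for any $u$ with $\int_N u^2\, dv = 1$,
$$-\int_N u^2\log u^2 \, dv \geq -\,\tau_0 \int_N 4|\nabla u|^2\, dv - C,$$
while Jensen's inequality provides the matching upper bound $-\int_N u^2 \log u^2 \, dv \leq 0$ (using $\int u^2 = 1$ and concavity).

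Next, let $u_{\mathcal{F}}$ be a minimizer of $\mathcal{F}^N$ so that $\mathcal{F}^N(u_{\mathcal{F}},g)=\lambda^N$. Comparing $\mathcal{W}^N(u_k,g,\tau_k) \leq \mathcal{W}^N(u_{\mathcal{F}},g,\tau_k)$ and substituting the two log-Sobolev bounds yields an inequality of the form
$$(\tau_k - \tau_0)\bigl(\mathcal{F}^N(u_k,g) - \lambda^N\bigr) \leq C',$$
from which $\mathcal{F}^N(u_k,g) \to \lambda^N$ follows as $\tau_k \to \infty$, since $\mathcal{F}^N(u_k,g) \geq \lambda^N$ always. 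Note this already shows $\int_N |\nabla u_k|^2 \, dv$ is bounded (using boundedness of $\R^N$ on $N$ compact), so $(u_k)$ is bounded in $H^1(N,g)$.

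Finally, to conclude $H^1$-convergence, I would extract a subsequence with $u_k \rightharpoonup u_\infty$ weakly in $H^1$ and strongly in $L^2$. Weak lower semicontinuity of $\int |\nabla u|^2$ plus $L^2$-convergence of the potential term give $\mathcal{F}^N(u_\infty,g) \leq \liminf \mathcal{F}^N(u_k,g) = \lambda^N$, hence $u_\infty = u_{\mathcal{F}}$ is a minimizer. This forces $\int_N|\nabla u_k|^2 \to \int_N|\nabla u_\infty|^2$, and combined with weak $H^1$-convergence this upgrades to strong convergence. The main obstacle I anticipate is calibrating the log-Sobolev constant $\tau_0$ correctly so that the factor $(\tau_k - \tau_0)$ becomes positive for large $k$, and handling the possibility of non-uniqueness of $\mathcal{F}$-minimizers (which forces the statement to be read as convergence to \emph{some} minimizer, along subsequences).
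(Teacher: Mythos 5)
Your proof follows essentially the same route as the paper: decompose $\mathcal{W}^N$ into $\tau\mathcal{F}^N + \mathcal{N}^N + C(n,\tau)$, absorb the Nash entropy term via a log-Sobolev inequality, and compare $\mathcal{W}^N(u_k,\tau_k)$ against $\mathcal{W}^N(u_{\mathcal{F}},\tau_k)$ to force $\mathcal{F}(u_k)\to\lambda^N$. You actually tighten two places the paper glosses over: you correctly write the log-Sobolev inequality with an additive constant $C$ (the paper's version $-\int u^2\log u^2\geq -4\tau_0\int|\nabla u|^2$ without a constant is not quite right in general for $\int u^2=1$ on an arbitrary compact manifold), and, more substantively, you supply the final step that converts $\mathcal{F}(u_k)\to\lambda^N$ into strong $H^1$ convergence via $H^1$-boundedness, Rellich compactness, weak lower semicontinuity of the Dirichlet energy, and the norm-convergence upgrade — a step the paper simply asserts. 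Your caveat about convergence holding along subsequences to \emph{some} minimizer when the $\mathcal{F}$-minimizer is not unique is also a fair and correct reading of the statement.
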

\subsection{Making the manifold look locally conical}
Here we want to prove that given a flow on a manifold diffeomorphic to a sphere that ends at the unit sphere, it is possible to construct a manifold asymptotic to the cone over the initial manifold for which at each point, the quantities involved in the entropy are arbitrarily close to that over a cone. 
\\
\indent This makes the computation of the entropy much easier, in particular to prove that the $\nu$-functional of the manifold smoothing the cone is high enough in the last section of the text.
\\
\indent We will consider a manifold : 
$$(M,g) = (\mathbb{R}^+\times N,dr^2+r^2g(r)).$$
In the case when $g(r)$ is constant, we have a cone and the expression from which we have been able to control the $\nu$-functional after a separation of variables is :
\begin{align*}
\mathcal{W}^{C(N)}(f,g,\tau) &= \int_0^\infty \left[\mathcal{W}^{N}\left(\tilde{f},g^N,\frac{\tau}{r^2}\right)-n(n-1)\frac{\tau}{r^2}\right]\left(\frac{e^{-a_r}}{(4\pi\tau)^{\frac{1}{2}}} dr\right) \\
&+\int_N \left(\int_0^\infty \left[\tau(\partial_r f)^2+a_r-1\right]\left(\frac{e^{-a_r}}{(4\pi\tau)^{\frac{1}{2}}} dr\right)\right)\left(\frac{e^{-\tilde{f}}}{\left(4\pi\tau r^{-2}\right)^{\frac{n}{2}}} dv\right).
\end{align*}
The goal is to have a similar expression (up to a controlled error term) for manifolds of the form $(M,g) = (\mathbb{R}^+\times N,dr^2+r^2g(r))$ for an adapted choice of $g(r)$, that is :
\begin{align*}
\mathcal{W}^{M}(f,g,\tau) &= \int_0^\infty \left[\mathcal{W}^{N}\left(\tilde{f},g(r),\frac{\tau}{r^2}\right)-n(n-1)\frac{\tau}{r^2}\right]\left(\frac{e^{-a_r}}{(4\pi\tau)^{\frac{1}{2}}} dr\right) \\
&+\int_N \left(\int_0^\infty \left[\tau(\partial_r f)^2+a_r-1\right]\left(\frac{e^{-a_r}}{(4\pi\tau)^{\frac{1}{2}}} dr\right)\right)\left(\frac{e^{-\tilde{f}}}{\left(4\pi\tau r^{-2}\right)^{\frac{n}{2}}} dv\right) \\
&+ \text{Error term}.
\end{align*}
\\

 We will choose $g(r) = g^{\hat{N}(\theta(r))}$ for a $\theta$ with slow enough variations to make $\R^M$ arbitrarily close to $\R^{C(\hat{N}(\theta(r)))}$ at a point at distance $r$. 

 The idea is to consider the variations of $g(r)$ small enough to consider it locally conical in the expression of the $\mathcal{W}$-functional. By taking advantage of the exponentially fast convergence of the metric along a renormalized flow.
\paragraph{Some formulas for Riemannian foliations by hypersurfaces}
:
\\
Now, we have the following formula for the curvature of $M =(\mathbb{R}\times N, dt^2 + g_t)$ with 
$$\partial_t g_t = 2K(t) :$$

For the Ricci curvature :
\begin{itemize}
\item $\Ric_{00} = -\partial_t K^i_i + g^{ij}K_{ij}$,
\item $\Ric_{0i} = -D_i K^j_j + D_jK^j_i$,
\item $\Ric_{ij} = \Ric^{n(t)}_{ij}-\partial_t K_{ij}+2 K_{il}K^l_j-K^l_lK_{ij}$.
\end{itemize}
Which gives that the scalar curvature is :
\begin{align}
\R^M = \R^{N(t)}-\partial_t K^i_i + g^{ij}K_{ij} + \sum_{i,j}g^{ij}(-\partial_t K_{ij}+2 K_{il}K^l_j-K^l_lK_{ij}).
\label{exp scal}
\end{align}
\\
\paragraph{Estimating the difference with the expression of $\mathcal{W}$ for a cone}
:
\\
\indent Let us define : 
$$\R_{rest} := \R^M(r)-\R^{C(\hat{N}(\theta(r)))}(r).$$
\begin{lem}
If the convergence of the family of metric considered is exponentially fast in the $C^2$-sense.

Choosing $\theta(r) = \frac{\delta}{r^2}$, we have the following estimate for small $\delta$ : 
\begin{align*}
\R_{rest}(r)=\mathcal{O}(\delta)
\end{align*}
uniformly on the manifold.
\end{lem}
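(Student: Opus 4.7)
The plan is to compute the scalar curvature of $M=(\mathbb{R}^+\times N,\,dr^2+r^2 h(r))$ with $h(r)=\hat g(\theta(r))$ via the warped-product formula \eqref{exp scal}, then compare with the scalar curvature of the reference cone $C(\hat N(\theta(r)))$ computed by the same formula but with $h$ held constant, and finally bound each leftover term using the exponential $C^2$-convergence of the renormalized flow together with $\theta(r)=\delta/r^2$.

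First I would write $g_r=r^2h(r)$ and compute the second fundamental form $K=\tfrac12\partial_r g_r=r\,h(r)+\tfrac{r^2}{2}h'(r)$. For the cone at the same radius the link metric is frozen at $h(r)$, so its second fundamental form is just $K^{\mathrm{cone}}=r\,h(r)$, which recovers the usual identity $\R^{C(\hat N(\theta(r)))}=(\R^{\hat N(\theta(r))}-n(n-1))/r^2$. Plugging $K$ and $K^{\mathrm{cone}}$ into \eqref{exp scal} and subtracting, all contributions depending only on $h(r)$ cancel, and $\R_{rest}(r)$ is left as a combination of the $(1,1)$-tensor $S:=h(r)^{-1}h'(r)$, its trace, $|S|^2$, $(\mathrm{tr}\,S)^2$, and $\mathrm{tr}_h h''(r)$, with coefficients that are algebraic functions of $1/r$.

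Next I would plug in $\theta(r)=\delta/r^2$, so $\theta'(r)=-2\delta/r^3$ and $\theta''(r)=6\delta/r^4$; this yields $h'(r)=\theta'(r)\,\partial_\theta\hat g|_{\theta(r)}$ and $h''(r)=\theta''(r)\,\partial_\theta\hat g+\theta'(r)^2\,\partial_\theta^2\hat g$. The $C^2$-exponential convergence hypothesis gives $|\partial_\theta\hat g|,|\partial_\theta^2\hat g|\le C e^{-\alpha\theta}$ uniformly in $\theta\ge 0$, while Proposition~\ref{Prop preservée} ensures $h\sim g^{\mathbb{S}^n}$ so that $h$-traces behave like $g^{\mathbb{S}^n}$-traces. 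Each term in $\R_{rest}$ is then pointwise dominated by an expression of the form $\delta^k r^{-m} e^{-\alpha\delta/r^2}$ for integers $k\ge 1$, $m\ge 0$. The substitution $u:=\delta/r^2$ converts this into $\delta^{k-m/2} u^{m/2} e^{-\alpha u}$, and since $u\mapsto u^{m/2} e^{-\alpha u}$ is bounded on $(0,\infty)$, one gets a uniform bound by a constant times $\delta^{k-m/2}$.

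The main obstacle is the term $\theta'(r)^2\,\partial_\theta^2\hat g$ inside $\mathrm{tr}_h h''$: naively this produces $\delta^2/r^6\cdot e^{-\alpha\delta/r^2}$, which after the substitution $u=\delta/r^2$ is only $O(1)$ in $\delta$, not $O(\delta)$. To upgrade to the claimed bound I would use that near the round sphere the renormalized flow has faster-than-exponential decay coming from the spectral gap of the linearized Einstein operator on $\mathbb{S}^n$ (the stability mechanism behind Proposition~\ref{Prop preservée}); equivalently, the renormalized flow equation $\partial_\theta\hat g=-2\Ric(\hat g)+\alpha(\theta)\hat g$ lets one re-express $\partial_\theta^2\hat g$ in terms of $\partial_\theta\hat g$ times algebraic expressions in $\hat g$, thereby absorbing the bad $\theta'(r)^2$ factor into an extra exponential. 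Once this delicate term is controlled, the remaining summands follow from the schematic bookkeeping of the previous paragraph, each producing at worst a factor of order $\delta$ uniformly in $r$.
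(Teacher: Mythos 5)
Your plan matches the paper's: use the warped-product scalar curvature formula to express $\R_{rest}$ in terms of $\partial_r g$ and $\partial_r^2 g$, plug in $g(r)=\hat g(\theta(r))$ with $\theta(r)=\delta/r^2$, and bound using the exponential $C^2$-decay of $\hat g$. The paper's own proof simply writes $\R_{rest}=\mathcal{O}\left(|\partial_r^2 g|+\frac{1}{r}|\partial_r g|+\frac{1}{r^2}|\partial_r g|^2\right)$, substitutes the chain rule, and asserts the $\mathcal{O}(\delta)$ conclusion; you were right to pause and track the powers of $\delta$.

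But your bookkeeping is off in a way that hides the real problem. As you write, each contribution is dominated by $\delta^k r^{-m}e^{-\alpha\delta/r^2}$, which the substitution $u=\delta/r^2$ turns into $\delta^{k-m/2}u^{m/2}e^{-\alpha u}$; you should then actually compute $k-m/2$ term by term. With $\theta'=-2\delta/r^3$ and $\theta''=6\delta/r^4$, the pieces $\theta''\,\partial_\theta\hat g$ and $\frac{\theta'}{r}\,\partial_\theta\hat g$ have $(k,m)=(1,4)$, while $(\theta')^2\,\partial_\theta^2\hat g$ and $(\theta'\,\partial_\theta\hat g)^2$ have $(k,m)=(2,6)$. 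In every case $k-m/2=-1$, so every term is uniformly of order $\delta^{-1}$, not $\delta$: the term you single out as the main obstacle is $\mathcal{O}(\delta^{-1})$, not $\mathcal{O}(1)$ as you state, and the remaining summands are exactly as bad, not "at worst of order $\delta$." The spectral-gap rescue also cannot work as stated: replacing the decay rate $\alpha$ by a larger $\alpha'$ only shrinks the constant $\sup_u u^{m/2}e^{-\alpha' u}$ and leaves the power $\delta^{k-m/2}$ untouched, and rewriting $\partial_\theta^2\hat g$ through the renormalized-flow equation does not remove the $(\theta')^2\sim\delta^2/r^6$ prefactor, which is precisely where the negative power of $\delta$ originates. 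So the proposal does not establish $\R_{rest}=\mathcal{O}(\delta)$; the difficulty you noticed is real, but it affects all four terms equally and is not one that a sharper smoothing estimate on $\hat g$ alone can remove.
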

\begin{rem}
	The convergence will always be exponentially fast for the constructions by renormalized Ricci flow we will consider.
\end{rem}
\begin{proof}

 Thanks to \eqref{exp scal}, we have an explicit expression for the scalar curvature in our case depending on the first and second derivatives
\\

 Let us just note that this quantity is a $O\left(|\partial^2_{r^2}g(r)|+\frac{1}{r}|\partial_r g(r)|+\frac{1}{r^2}|\partial_r g(r)|^2\right)$.
\\

Now, we know that the convergence of $\hat{g}$ is exponentially fast in the $C^2$-sense :
$$|\partial^k_{t^k}\hat{g}|\leqslant C_ke^{-c_kt},$$
so, choosing $\theta(r)=\frac{\delta}{r^2}$ ($g(r) = \hat{g}(\frac{\delta}{r^2})$, for some small $\delta$), by the $O\left(|\partial^2_{r^2}g(r)|+\frac{1}{r}|\partial_r g(r)|+\frac{1}{r^2}|\partial_r g(r)|^2\right)$ estimate, we have :
\begin{align*}
\R_{rest} &= O\left(|\partial^2_{r^2}g(r)|+\frac{1}{r}|\partial_r g(r)|+\frac{1}{r^2}|\partial_r g(r)|^2\right)\\
&=O\left(\left((\theta')^2|\partial^2_{r^2}\hat{g}|+\theta''|\partial_r\hat{g}|\right)+\left(\frac{1}{r}\theta'|\partial_r \hat{g}|\right)+\left(\frac{1}{r^2}(\theta')^2|\partial_r \hat{g}|^2\right)\right)\\
&=O\left(\left(\frac{\delta^2}{r^6}e^{-\frac{\delta c_2}{r^2}}+\frac{\delta}{r^4}e^{-\frac{\delta c_1}{r^2}}\right)+\left(\frac{\delta}{r^4}e^{-\frac{\delta c_1}{r^2}}\right)+\left(\frac{\delta^2}{r^8}e^{-\frac{2\delta c_1}{r^2}}\right)\right)\\
&=\mathcal{O}(\delta+\delta^2).
\end{align*}
uniformy in $r$.
\end{proof}
So for $\delta$ small enough, this term will become small. Let us see how we can take care of it :
\begin{lem}
\begin{align*}
\mathcal{W}^{M}(f,g,\tau) &= \int_0^\infty [\tilde{\mathcal{W}}^{N(r)}(\tilde{f},g(r),\tau r^{-2})]\left(\frac{e^{-a_r}}{(4\pi\tau)^{\frac{1}{2}}} dr\right)\\
&+\int_N \int_0^\infty [\tau(\partial_r f)^2+a_r-1]\left(\frac{e^{-a_r}}{(4\pi\tau)^{\frac{1}{2}}} dr\right)\left(\frac{e^{-\tilde{f}}}{\left(4\pi\tau r^{-2}\right)^{\frac{n}{2}}} dv\right) \\
&+ \mathcal{O}(\delta)\int_0^{\infty}\frac{\tau}{r^2}\left(\frac{e^{-a_r}}{(4\pi\tau)^{\frac{1}{2}}} dr\right).\\
\end{align*}
\begin{proof}
Since on $M$ we have the following formulas coming from the foliations formulas :
\begin{itemize}
\item $\R^M = \frac{\R_{N(r)}-n(n-1)}{r^2}+\R_{rest}$ by definition of $\R_{rest}$,
\item $|\nabla^M f|^2 = (\partial_r f)^2+\frac{|\nabla^{N(r)} f|^2}{r^2}$,
\item $dv^M = r^ndv^{N(r)}dr$.
\end{itemize}
This implies that $\mathcal{W}^M$ has the following expression :
\begin{align*}
\mathcal{W}^M(f,g,\tau) =\int_0^\infty\int_{N(r)} &\left[\tau\left((\partial_r f)^2+\frac{|\nabla^{N(r)} f|^2+(\R^{N(r)}-n(n-1)+\R_{rest})}{r^2}\right)+f-(n+1)\right]\\
&\times\left(\frac{e^{-a_r}}{(4\pi\tau)^{\frac{1}{2}}} dr\right)\left(\frac{e^{-\tilde{f}}}{\left(4\pi\tau r^{-2}\right)^{\frac{n}{2}}} dv\right).
\end{align*}
\indent Now, defining the same separation of variables as in the cone case and using the last lemma stating that that $\R_{rest} = \mathcal{O}(\delta)$ we get :
\begin{align*}
\mathcal{W}^{M}(f,g,\tau) &= \int_0^\infty [\tilde{\mathcal{W}}^{N(r)}(\tilde{f},g(r),\tau r^{-2})]\left(\frac{e^{-a_r}}{(4\pi\tau)^{\frac{1}{2}}}dr\right) \\
&+\int_N \int_0^\infty [\tau(\partial_r f)^2+a_r-1]\left( \frac{e^{-\tilde{f}}}{(4\pi\tau r^{-2})^{\frac{n}{2}}}dv\right)\left(\frac{e^{-a_r}}{(4\pi\tau)^{-\frac{1}{2}}}dr\right)  \\
&+ \mathcal{O}(\delta)\int_0^{\infty}\frac{\tau}{r^2}\left(\frac{e^{-a_r}}{(4\pi\tau)^{\frac{1}{2}}}dr\right).\\
\end{align*}
\end{proof}
\end{lem}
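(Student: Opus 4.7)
The plan is to repeat, step by step, the separation of variables argument proven earlier for honest cones, allowing the slice metric $g(r)$ to vary in $r$ and capturing the discrepancy in a single error term controlled by the preceding pointwise bound on $\R_{rest}$.

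First, I would plug the three foliation identities just recalled (volume $r^n\,dv^{N(r)}\,dr$, gradient splitting $(\partial_r f)^2+r^{-2}|\nabla^{N(r)}f|^2$, and scalar-curvature decomposition $\frac{\R^{N(r)}-n(n-1)}{r^2}+\R_{rest}$) directly into the definition of $\mathcal{W}^M(f,g,\tau)$. Term by term, the resulting integrand is identical to the one appearing in the cone separation-of-variables lemma, \emph{plus} the single extra summand $\tau\,\R_{rest}(r)$ inside the square brackets.

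Second, on the principal part I would rerun verbatim the separation of variables from the cone case: define $a_r$ and $\tilde f$ by
\[
\frac{e^{-a_r}}{\sqrt{4\pi\tau}} \;=\; \int_N \frac{r^n e^{-f}}{(4\pi\tau)^{n/2}}\,dv, \qquad \tilde f = f - a_r.
\]
The algebraic manipulation carried out in that proof only used the fibrewise structure of the cone, never that the link metric was independent of $r$, so each slice $(N,g(r))$ simply contributes $\mathcal{W}^{N(r)}(\tilde f, g(r), \tau r^{-2})$ in place of the constant-link $\mathcal{W}^{N}$, and the first two lines of the statement come out unchanged.

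Third, the extra summand $\tau\,\R_{rest}(r)\,\frac{e^{-f}}{(4\pi\tau)^{(n+1)/2}}\,r^n\,dv\,dr$ has an integrand whose $\R_{rest}$ factor is independent of the link variable, so the inner integral over $N$ collapses to $\frac{e^{-a_r}}{\sqrt{4\pi\tau}}$ and one is left with a single $r$-integral. Invoking the preceding lemma, refined so as to extract a $1/r^2$ weight from the explicit $\delta^{k}r^{-m}e^{-c\delta/r^2}$ terms that appear there (each one is bounded by $r^{-2}$ times a bounded function of the rescaled variable $u=\delta/r^2$), one dominates this remainder by $\mathcal{O}(\delta)\int_0^\infty \frac{\tau}{r^2}\,\frac{e^{-a_r}}{\sqrt{4\pi\tau}}\,dr$, which is exactly the advertised shape. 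No genuine obstacle arises: the argument is essentially bookkeeping that factors through the cone proof, and the only delicate point is the refined pointwise control of $\R_{rest}$ producing the $1/r^2$ factor in the error term.
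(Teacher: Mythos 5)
Your proposal follows the same skeleton as the paper's proof: plug the foliation identities into $\mathcal{W}^M$, redo the cone separation of variables slice by slice in $r$, and isolate the $\R_{rest}$ contribution as an error term. The algebraic re-run of the separation of variables is fine, and matches the paper. Two issues arise in the error-term bookkeeping, though. A minor one first: $\R_{rest}$ is a function on $M$, not of $r$ alone (the foliation formulas involve tensor derivatives of the slice metric $g(r)$), so the inner $N$-integral of $\tau\R_{rest}\,\frac{e^{-\tilde f}}{(4\pi\tau r^{-2})^{n/2}}$ does not literally ``collapse''; the right move is to pull out $\sup_N|\R_{rest}|$, which still leaves $\frac{e^{-a_r}}{\sqrt{4\pi\tau}}$, so nothing is lost, but it should be said.

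The more serious gap is the claimed extraction of an $\mathcal{O}(\delta)\cdot r^{-2}$ pointwise bound. Writing $u=\delta/r^2$, the three terms from the preceding lemma, $\frac{\delta^2}{r^6}e^{-c_2\delta/r^2}$, $\frac{\delta}{r^4}e^{-c_1\delta/r^2}$, $\frac{\delta^2}{r^8}e^{-2c_1\delta/r^2}$, factor respectively as $\frac{u^2 e^{-c_2 u}}{r^2}$, $\frac{u\,e^{-c_1 u}}{r^2}$, and $\frac{u^3 e^{-2c_1 u}}{\delta\,r^2}$. For the first two the $u$-factor is bounded, but its supremum is a \emph{$\delta$-independent} constant, so you obtain only $\mathcal{O}(1)/r^2$, not $\mathcal{O}(\delta)/r^2$; no smallness in $\delta$ survives. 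For the third, the $u$-factor is $\mathcal{O}(1/\delta)$, so that term is not even $r^{-2}$ times a bounded function. Hence the advertised domination by $\mathcal{O}(\delta)\int_0^\infty\frac{\tau}{r^2}\,\frac{e^{-a_r}}{\sqrt{4\pi\tau}}\,dr$ does not follow from the pointwise estimates you invoke, and a genuinely stronger control on $\R_{rest}$ is needed (for instance exploiting that the bad region $r^2\sim\delta$ shrinks with $\delta$ and contributes negligibly to the weighted integral, rather than a pointwise bound of the form $\mathcal{O}(\delta)/r^2$). For context, the paper's own bookkeeping is inconsistent at exactly this point: its display of $\mathcal{W}^M$ places $\R_{rest}$ inside the $r^{-2}$ prefactor, at odds with the recalled identity $\R^M=\frac{\R^{N(r)}-n(n-1)}{r^2}+\R_{rest}$ and with the definition $\R_{rest}=\R^M-\R^{C(\hat N(\theta(r)))}$, and the preceding lemma's claim $\R_{rest}=\mathcal{O}(\delta)$ uniformly is not supported by the displayed $\delta^k r^{-m}e^{-c\delta/r^2}$ estimates. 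So the gap you ran into is a real one, not a misreading, but as stated your ``refinement'' does not close it.
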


\subsection{Lower bounds on the $\mathcal{W}$-functional of perturbation of the sphere}
In this section, we see under which conditions, we can control the $\mathcal{W}$-functional of perturbation of the sphere.
\\
\indent We will obtain a precise enough control for our purpose by assuming a lower bound on the scalar curvature and the $C^0$-closeness. Recall that the $\mathcal{W}$-functional has three components :
$$\mathcal{W}(f,g^N,\tau) = \tau \mathcal{F}\left(f+\frac{n}{2}\log(4\pi\tau)\;,\;g\right)+\mathcal{N}\left(f+\frac{n}{2}\log(4\pi\tau)\;,\;g\right) +C(n,\tau).$$
We will see how we can control each of these.
\begin{prop}
If $(\mathbb{S}^n, g^N)$ satisfies the two following properties :
\begin{itemize}
\item $C^0$-closeness : 
$$\beta_1^2 g^{\mathbb{S}^n}\leqslant g^N \leqslant \beta_2^2g^{\mathbb{S}^n}.$$
\item Lower bound on the scalar curvature : 
$$\R^N\geqslant \frac{n(n-1)}{\beta_2^2}.$$
\end{itemize}
Then, we have the following lower bounds on the different component of the $\mathcal{W}$-functional compared to that of $g^{\mathbb{S}^n}$ :
\begin{itemize}

\item For $\mathcal{F}$ :
$$\mathcal{F}\left(f+\frac{n}{2}\log(4\pi\tau)\;,\;g^N\right)\geqslant \frac{\beta_1^{n}}{\beta_2^{n+4}}\mathcal{F}\left(f +\delta+\frac{n}{2}\log(4\pi\tau)\;,\;g^{\mathbb{S}^n}\right).$$
\item For $\mathcal{N}$ :
$$\mathcal{N}\left(f+\frac{n}{2}\log(4\pi\tau)\;,\;g^N\right)\geqslant \frac{\beta_2^{n}}{\beta_1^n}\mathcal{N}\left(f+\delta+\frac{n}{2}\log(4\pi\tau)\;,\;g^{\mathbb{S}^n}\right)+\left(\frac{\beta_1^{n}}{\beta_2^n}-\frac{\beta_2^{n}}{\beta_1^n}\right)\frac{vol(\mathbb{S}^n)}{e}-n\log\beta_2.$$
\end{itemize}
Where $\delta$ is defined to ensure that $\int_{\mathbb{S}^n}\frac{e^{-f-\delta}}{(4\pi\tau)^{\frac{n}{2}}}dv^{\mathbb{S}^n} = 1$. Note that it depends on $f$, but that there are bounds on it that only depend on the $C^0$-closeness.
\end{prop}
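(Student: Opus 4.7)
The plan is to exploit the pointwise two-sided $C^0$ comparison $\beta_1^2 g^{\mathbb{S}^n} \leq g^N \leq \beta_2^2 g^{\mathbb{S}^n}$, which immediately yields $|\nabla^N \phi|^2_{g^N} \geq \beta_2^{-2} |\nabla^{\mathbb{S}^n}\phi|^2_{g^{\mathbb{S}^n}}$ and $\beta_1^n\, dv^{\mathbb{S}^n} \leq dv^N \leq \beta_2^n\, dv^{\mathbb{S}^n}$, together with the assumed $\R^N \geq \R^{\mathbb{S}^n}/\beta_2^2$. Writing $\phi = f+\tfrac{n}{2}\log(4\pi\tau)$, the renormalization $\int e^{-\phi}\,dv^N = 1 = \int e^{-(\phi+\delta)}\,dv^{\mathbb{S}^n}$ combined with the volume comparison forces $e^{\delta}\in[\beta_2^{-n}, \beta_1^{-n}]$; in particular $|\delta|$ is controlled by $n\log\beta_2$ and $-n\log\beta_1$.

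For the $\mathcal{F}$-bound, I would note that the integrand $(|\nabla^N \phi|^2 + \R^N)e^{-\phi}$ is pointwise nonnegative since $\R^{\mathbb{S}^n}>0$ forces $\R^N>0$. Applying successively the gradient, scalar, and (thanks to nonnegativity) the volume lower bound yields
\[
\mathcal{F}^N(\phi, g^N) \geq \beta_2^{-2}\,\beta_1^n \int_{\mathbb{S}^n} \bigl(|\nabla^{\mathbb{S}^n}\phi|^2 + \R^{\mathbb{S}^n}\bigr)\,e^{-\phi}\,dv^{\mathbb{S}^n} = \beta_2^{-2}\,\beta_1^n\, e^{\delta}\, \mathcal{F}^{\mathbb{S}^n}(\phi+\delta, g^{\mathbb{S}^n}),
\]
since adding the constant $\delta$ leaves the gradient untouched. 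Using that $\mathcal{F}^{\mathbb{S}^n}\geq 0$, one then plugs in the lower bound $e^\delta\geq \beta_2^{-n}$ to extract the announced coefficient (my count gives $\beta_1^n/\beta_2^{n+2}$, which is absorbed in the stated $\beta_1^n/\beta_2^{n+4}$).

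The harder part is the $\mathcal{N}$-bound, because $\phi e^{-\phi}$ is not signed and is unbounded below on $\{\phi<0\}$. The saving fact is the universal upper bound $\phi e^{-\phi}\leq 1/e$ on $\{\phi\geq 0\}$, whereas on $\{\phi<0\}$ the integrand is nonpositive. I would split $\int \phi e^{-\phi}\,dv^N$ over the two regions and apply the appropriate volume comparison on each: $dv^N\geq \beta_1^n dv^{\mathbb{S}^n}$ where the integrand is nonnegative, $dv^N\leq \beta_2^n dv^{\mathbb{S}^n}$ where it is nonpositive. Recombining around the $\beta_2^n$ factor gives
\[
\int_{N}\phi e^{-\phi}dv^N \geq \beta_2^n\int_{\mathbb{S}^n}\phi e^{-\phi}dv^{\mathbb{S}^n} + (\beta_1^n-\beta_2^n)\int_{\{\phi\geq 0\}}\phi e^{-\phi}\,dv^{\mathbb{S}^n},
\]
and the bounded nonnegative integrand in the error term, together with $\beta_1^n-\beta_2^n\leq 0$, produces a remainder bounded below by $(\beta_1^n-\beta_2^n)\,vol(\mathbb{S}^n)/e$. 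Converting $\phi\to \phi+\delta$ in the main term brings in $\mathcal{N}^{\mathbb{S}^n}(\phi+\delta,g^{\mathbb{S}^n})$ (via $\int\phi e^{-\phi}dv^{\mathbb{S}^n} = e^{\delta}(\mathcal{N}^{\mathbb{S}^n}(\phi+\delta)-\delta)$), and the bounds on $e^\delta$ convert the $\beta_2^n$ prefactor into the stated $\beta_2^n/\beta_1^n$, while the leftover $\delta$ is absorbed into the $-n\log\beta_2$ term.

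The main obstacle I anticipate is bookkeeping around the sign of $\mathcal{N}^{\mathbb{S}^n}(\phi+\delta,g^{\mathbb{S}^n})$: since one has to choose between the upper and lower bound on $e^\delta$ to pass from $e^\delta\,\beta_2^n\,\mathcal{N}^{\mathbb{S}^n}$ to a coefficient valid uniformly in $\phi$, a single clean prefactor is only obtainable by using the weaker choice (hence the appearance of $\beta_2^n/\beta_1^n$ rather than $1$). The splitting of $\phi e^{-\phi}$ into positive and negative parts is the technically delicate step, since it is exactly what simultaneously generates the $\beta_2^n/\beta_1^n$ coefficient and the error term $(\beta_1^n/\beta_2^n-\beta_2^n/\beta_1^n)\,vol(\mathbb{S}^n)/e$ visible in the statement.
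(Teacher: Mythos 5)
Your treatment of the $\mathcal{F}$-bound is essentially correct and in fact a little cleaner and sharper than the paper's. The paper decomposes $\nabla^N f$ as $\alpha\nabla^{\mathbb{S}^n}f$ plus an orthogonal piece and bounds $\alpha$; you bypass this with the direct pointwise estimate $|\nabla^N\phi|^2_{g^N}\geq \beta_2^{-2}|\nabla^{\mathbb{S}^n}\phi|^2_{g^{\mathbb{S}^n}}$. Your coefficient $\beta_1^n/\beta_2^{n+2}$ is larger than the stated $\beta_1^n/\beta_2^{n+4}$, and since $\mathcal{F}^{\mathbb{S}^n}\geq 0$, the weaker stated bound follows. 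This part is fine.

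The $\mathcal{N}$-bound, however, has a genuine gap, and it is precisely the point you flag as ``the main obstacle'' and then wave away. After your recombination you arrive at
$\mathcal{N}^N \geq \beta_2^n\int_{\mathbb{S}^n}\phi e^{-\phi}\,dv^{\mathbb{S}^n}+(\beta_1^n-\beta_2^n)\,vol(\mathbb{S}^n)/e,$
and you then convert the main term via $\int\phi e^{-\phi}\,dv^{\mathbb{S}^n}=e^{\delta}\bigl(\mathcal{N}^{\mathbb{S}^n}(\phi+\delta)-\delta\bigr)$, leaving the prefactor $\beta_2^n e^{\delta}\in[1,\beta_2^n/\beta_1^n]$ in front of $\mathcal{N}^{\mathbb{S}^n}(\phi+\delta)$. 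There is no single ``weaker choice'' that works uniformly: if $\mathcal{N}^{\mathbb{S}^n}(\phi+\delta)<0$ the worst coefficient is $\beta_2^n/\beta_1^n$, but if $\mathcal{N}^{\mathbb{S}^n}(\phi+\delta)>0$ the worst coefficient is $1$, and then $\beta_2^n e^{\delta}\mathcal{N}^{\mathbb{S}^n}\geq \frac{\beta_2^n}{\beta_1^n}\mathcal{N}^{\mathbb{S}^n}$ simply fails (since $\beta_2^n/\beta_1^n>1$). One can salvage your route with an extra additive error of the form $(1-\beta_2^n/\beta_1^n)\log\,vol(\mathbb{S}^n)$ using $\mathcal{N}^{\mathbb{S}^n}\leq\log vol(\mathbb{S}^n)$, but that is not the stated inequality and you do not provide it.

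The paper avoids the issue by changing variables \emph{before} the sign-split: writing $u=e^{\delta}w$ with $w=e^{-(\phi+\delta)}$ (so $\int_{\mathbb{S}^n}w\,dv^{\mathbb{S}^n}=1$), expanding $\log u=\delta+\log w$ to isolate the additive $\delta$-constant, and only then splitting the remaining $-\,e^{\delta}\int w\log w\,\theta\,dv^{\mathbb{S}^n}$ by the sign of $w\log w$, i.e. across $\{w\geq 1\}$ versus $\{w<1\}$ (equivalently $\{\phi\leq-\delta\}$ versus $\{\phi>-\delta\}$, a different split from yours). The constant $e^{\delta}$ is then combined with the Jacobian $\theta$ into $e^{\delta}\theta\in[\beta_1^n/\beta_2^n,\ \beta_2^n/\beta_1^n]$, and the split yields the prefactor $\beta_2^n/\beta_1^n$ on the normalized Nash entropy $-\int w\log w\,dv^{\mathbb{S}^n}=\mathcal{N}^{\mathbb{S}^n}(\phi+\delta)$ directly, with no sign ambiguity. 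This ``normalize first, split second'' ordering is the missing idea; your ordering (split first, renormalize by $e^{\delta}$ last) cannot produce the stated coefficient uniformly.
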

\begin{proof}
All along the proof, we will note $\theta:=\frac{dv^N}{dv^{\mathbb{S}^n}}$. By the $C^{0}$-closeness, we have $\beta_1^n\leqslant\theta\leqslant\beta_2^n$. Let us also note that the $\delta$ defined in the statement of the proposition satisfies 
$$-n\log\beta_2\leqslant \delta\leqslant -n\log\beta_1$$ 

 Let us start by proving the estimate for $\mathcal{F}$ : 
\\

 We have a lower bound on the scalar curvature and the volume form, we only have to control $g^N\left(\nabla^N f,\nabla^N f\right)$ thanks to $g^{\mathbb{S}^n}\left(\nabla^{\mathbb{S}^n} f,\nabla^{\mathbb{S}^n} f\right)$ :
\\

 By definition, we have, for any $v$ in $T_x\mathbb{S}^n$ for some $x\in\mathbb{S}^n$ : 
\begin{align*}
df(v) &= g^N\left(\nabla^N f,v\right)\\
&= g^{\mathbb{S}^n}\left(\nabla^{\mathbb{S}^n} f,v\right).
\end{align*}
We can decompose $\nabla^N f =: \alpha \nabla^{\mathbb{S}^n} f +p^N(\nabla^{N}f)$ where $p^N$ is the projection on the orthogonal of $\nabla^{\mathbb{S}^n} f$ for $g^N$ (note that $\alpha$ depends on the point at which we look at the tangent space and the direction of $\nabla^{\mathbb{S}^n} f$).
\\
\indent Let us find bounds on $\alpha$ which will give us a lower bound on $g^N\left(\nabla^N f,\nabla^N f\right)$ :
By definition, we have :
\begin{align*}
df(\nabla^{\mathbb{S}^n} f) &= g^N\left(\nabla^N f,\nabla^{\mathbb{S}^n} f\right)\\
&= \alpha g^N\left(\nabla^{\mathbb{S}^n} f,\nabla^{\mathbb{S}^n} f\right)\\
&=g^{\mathbb{S}^n}\left(\nabla^{\mathbb{S}^n} f,\nabla^{\mathbb{S}^n} f\right)>0.
\end{align*}
Now, since $\beta_1^2g^{\mathbb{S}^n}\leqslant g^N\leqslant \beta_2^2g^{\mathbb{S}^n}$, we have :
$$\frac{1}{\beta_2^2}\leqslant\alpha\leqslant \frac{1}{\beta_1^2}.$$
\indent We are now ready to bound the $\mathcal{F}$ functional :
\begin{align*}
\mathcal{F}&\left(f+\frac{n}{2}\log(4\pi\tau)\;,\;g^N\right) = \int_{\mathbb{S}^n} \left(g^N\left(\nabla^N f,\nabla^N f\right)+\R^N\right)\frac{e^{-f}}{(4\pi\tau)^{\frac{n}{2}}}\;\theta dv^{\mathbb{S}^n} \\
&\geqslant \int_{\mathbb{S}^n} \left(\alpha^2g^N\left(\nabla^{\mathbb{S}^n} f,\nabla^{\mathbb{S}^n} f\right)+g^N(p^N(\nabla^N f)\;,\;p^N(\nabla^N f))+\frac{n(n-1)}{\beta_2^2}\right)\frac{e^{-f-\delta}}{(4\pi\tau)^{\frac{n}{2}}}\; e^{\delta}\beta_1^n dv^{\mathbb{S}^n}.
\end{align*}
By the decomposition of $\nabla^N f$ and the $C^0$-closeness.
\\
\indent We can now use our bound on $\alpha$ and $\delta$ and conclude that :
\begin{align*}
\mathcal{F}&\left(f+\frac{n}{2}\log(4\pi\tau)\;,\;g^N\right)\geqslant \int_{\mathbb{S}^n} \left(\frac{1}{\beta_2^4}g^N\left(\nabla^{\mathbb{S}^n} f+\delta,\nabla^{\mathbb{S}^n} f+\delta\right)+\frac{n(n-1)}{\beta_2^2}\right)\frac{e^{-f-\delta}}{(4\pi\tau)^{\frac{n}{2}}}\;\frac{\beta_1^{n}}{\beta_2^n}dv^{\mathbb{S}^n}\\
&\geqslant \frac{\beta_1^{n}}{\beta_2^{n+4}}\mathcal{F}\left(f+\delta+\frac{n}{2}\log(4\pi\tau)\;,\;g^{\mathbb{S}^n}\right).
\end{align*}
Which is the stated inequality.
\\
\\
\indent Let us now take care of the $\mathcal{N}$ functional.
\\
\indent Let us note $u = \frac{e^{-f}}{(4\pi\tau)^{\frac{n}{2}}}$ and $w = \frac{e^{-f-\delta}}{(4\pi\tau)^{\frac{n}{2}}}$, which imply :
$$\int_{\mathbb{S}^n}u\; \theta dv^{\mathbb{S}^n} =1,$$
and,
$$\int_{\mathbb{S}^n}w \;dv^{\mathbb{S}^n} =1.$$
The expressions to compare are :
$$\mathcal{N}\left(f+\frac{n}{2}\log(4\pi\tau)\;,\;g^N\right) = -\int_{\mathbb{S}^n}u\log u\; \theta dv^{\mathbb{S}^n},$$
and,
$$\mathcal{N}\left(f+\delta+\frac{n}{2}\log(4\pi\tau)\;,\;g^{\mathbb{S}^n}\right) = -\int_{\mathbb{S}^n}w\log w \;dv^{\mathbb{S}^n}.$$
And note that : $u= e^\delta w$.
\\
\indent Let us start the comparison of both expression by separating the positive and negative part of the integrand :
\begin{align*}
\mathcal{N}&\left(f+\frac{n}{2}\log(4\pi\tau)\;,\;g^N\right) = -\int_{\mathbb{S}^n}u\log u\; \theta dv^{\mathbb{S}^n}\\
&=-e^\delta\int_{\mathbb{S}^n}w\log w\; \theta dv^{\mathbb{S}^n} + \delta\\
&=-e^\delta\int_{\{w\geqslant 1\}}w\log w\; \theta dv^{\mathbb{S}^n}-e^\delta\int_{\{w < 1\}}w\log w \; \theta dv^{\mathbb{S}^n} + \delta.\\
\end{align*}
Now, we have bounds on $\delta$ and $\theta$ coming from the $C^0$-closeness to the unit sphere, namely :
$$-n\log \beta_2\leqslant\delta\leqslant -n\log \beta_1,$$
and,
$$\beta_1^n\leqslant\theta\leqslant\beta_2^n.$$
\indent Now that we have separated the positive and negative part, we can use our upper or lower bounds on each term :
\begin{align*}
\mathcal{N}&\left(f+\frac{n}{2}\log(4\pi\tau)\;,\;g^N\right)= -e^\delta\int_{\{w\geqslant 1\}}w\log w\; \theta dv^{\mathbb{S}^n}-e^\delta\int_{\{w < 1\}}w\log w\; \theta dv^{\mathbb{S}^n} + \delta\\
&\geqslant -\beta_2^{2n}\int_{\{w\geqslant 1\}}w\log w \; dv^{\mathbb{S}^n}-\beta_1^{2n}\int_{\{w < 1\}}w\log w\; dv^{\mathbb{S}^n}+n\log\beta_1.
\end{align*}
We can now use the fact that for any positive real number $x$, $x\log x\geqslant -\frac{1}{e}$ :
\begin{align*}
\mathcal{N}&\left(f+\frac{n}{2}\log(4\pi\tau)\;,\;g^N\right)\geqslant -\frac{\beta_2^{n}}{\beta_1^n}\int_{\{w\geqslant 1\}}w\log w \; dv^{\mathbb{S}^n}-\frac{\beta_1^{n}}{\beta_2^n}\int_{\{w < 1\}}w\log w \; dv^{\mathbb{S}^n}+n\log\beta_1\\
&= -\frac{\beta_2^{n}}{\beta_1^n}\int_{\mathbb{S}^n}w\log w \;dv^{\mathbb{S}^n}+\left(\frac{\beta_1^{n}}{\beta_2^n}-\frac{\beta_2^{n}}{\beta_1^n}\right)\int_{\{w < 1\}}w\log w\; dv^{\mathbb{S}^n}+n\log\beta_1 \\
&\geqslant -\frac{\beta_2^{n}}{\beta_1^n}\int_{\mathbb{S}^n}w\log w\; dv^{\mathbb{S}^n}+\left(\frac{\beta_1^{n}}{\beta_2^n}-\frac{\beta_2^{n}}{\beta_1^n}\right)\frac{vol(\mathbb{S}^n)}{e}+n\log\beta_1,
\end{align*}
which is exactly what we stated.
\end{proof}
In particular, such manifolds satisfy the lower bound $L(\epsilon_1,\; \epsilon_2,\;\epsilon_3)$ defined at the end of section 3 with :
$$
\left\{
\begin{array}{lll}
\epsilon_1 = 1-\frac{\beta_1^{n}}{\beta_2^{n+4}},\\
\epsilon_2 = \frac{\beta_2^{n}}{\beta_1^n}-1,\\
\epsilon_3 = \left(\frac{\beta_1^{n}}{\beta_2^n}-\frac{\beta_2^{n}}{\beta_1^n}\right)\frac{vol(\mathbb{S}^n)}{e}+n\log\beta_1.
\end{array}
\right.
$$

\begin{rem}
In the text, to preserve these estimates along a renormalized Ricci flow, we ask for some more stability of the flow and make use of the Li-Yau-Hamilton inequality generalized by Brendle, thus we ask for the positivity of the isotropic curvature when crossed with $\mathbb{R}^2$, which is a condition preserved by Ricci flow and implied by a positive curvature tensor. 
\\
\indent It is likely that a lower bound on $\lambda^N$ or $\nu^N$ rather than the scalar curvature is enough to get similar estimates.
\end{rem}
\begin{rem}
For a sphere of radius $\beta$, we have the result with :
\begin{itemize}
\item If $\beta\geqslant 1$, then :
$$\mathcal{W}^{\beta\mathbb{S}^n}(f,\beta^2g^{\mathbb{S}^n},\tau)\geqslant \frac{\tau}{\beta^2}\mathcal{F}^{\mathbb{S}^n}\left(f+C,g^{\mathbb{S}^n}\right)+\mathcal{N}^{\mathbb{S}^n}\left(f+C,g^{\mathbb{S}^n}\right)+\frac{n}{2}\log{4\pi\tau}-n,$$
that is : $1-\epsilon_1 = \frac{1}{\beta^2}$, $\epsilon_2 = 0$ and $\epsilon_3 = 0$.
\item If $\beta\leqslant 1$, then :
$$\mathcal{W}^{\beta\mathbb{S}^n}\left(f,\beta^2g^{\mathbb{S}^n},\tau\right)\geqslant \tau\mathcal{F}^{\mathbb{S}^n}(f+C,g^{\mathbb{S}^n})+\mathcal{N}^{\mathbb{S}^n}(f+C,g^{\mathbb{S}^n})+\frac{n}{2}\log{4\pi\tau}-n+n\log\beta,$$
that is $\epsilon_1 = 0$, $\epsilon_2 = 0$ and $\epsilon_3 = -n\log\beta$.
\end{itemize}
And in dimension $3$, thanks to the explicit $\eta_3$ given in the last section, we can smooth out the cones over a sphere of radius $\beta$ if $\beta\in[0.77,1.05]$.
\end{rem}

\section{Particular case of positively curved Einstein links}
Here we give the proofs of the remarks given along the paper about positively curved Einstein manifolds, with a particular interest in the sphere.
\subsection{The behavior of the $\mu$-functional of positively curved Einstein manifolds}
\paragraph{The minimizing function of $\mathcal{W}^N(.,g,\tau)$ is constant if $\tau\geqslant T_N$}
\begin{lem}
Let us consider $(N,g)$ an Einstein manifold such that, $$\Ric = \frac{1}{2T_N}g,$$ (\emph{of shrinking time $T_N$}), and $\phi_N$ its constant potential function such that : $$e^{-\phi_N} vol(N) = 1,$$
\\
\indent and let us note $\phi^\tau$ a function such that $\phi^\tau -\frac{n}{2}\log(4\pi\tau)$ is a minimizer for $\mathcal{W}^N$ at $\tau$.
\begin{align*}
\mathcal{W}^N\left(\phi^\tau -\frac{n}{2}\log(4\pi\tau),g^N,\tau\right) = \mu^N(g^N,\tau).
\end{align*}
\\
Then, for all $\tau \geqslant T_N$, 
\begin{align*}
\phi^\tau = \phi^{T_N}.
\end{align*}
\begin{rem}
In particular, it is also a minimizer of the $\mathcal{F}^N$-functional (which is consistent with the fact that when $\tau\to\infty$, $f^\tau$ gets closer and closer to a minimizer of the $\mathcal{F}$ functional).
\end{rem}
\begin{rem}
Note also that this is not true for $\tau<T_N$. For example, $\phi^{T_{\mathbb{S}^n}}$ is a constant function, but for small $\tau$, $\phi^{\tau}$ looks like a Gaussian on the sphere (see \cite{CHI}). Also note that the value of $\mathcal{W}^{\mathbb{S}^n}$ with a constant function tends to $+\infty$ when $\tau\to 0$.
\\
\indent It is also false for shrinking solitons.
\end{rem}
\begin{proof}
Let us choose $\tau_0 \geqslant T_N$.
\\
\\
There exists $0\leqslant t_0<T_N$ such that $$\tau_0 = \frac{T_N}{1-\frac{t_0}{T_N}}.$$
\\
Let us also consider $N$ a positively curved Einstein manifold of shrinking time $T_N$, that is :
\begin{align*}
g^N(t) =  \left(1-\frac{t}{T_N}\right)g^N(0).
\end{align*}
\paragraph{Let us compute $\mathcal{W}^N\left(g^N(0),\left[f^{T_N}+\frac{n}{2}\log\frac{\tau_0}{T_N}\right],\tau_0\right)$}
:
\\
The Einstein manifold has a shrinking soliton structure : $\Ric + \mathcal{L}_{\nabla \phi^{T_N}}g = \frac{1}{2T_N}g$), with $\phi^{T_N} = \phi_N$. 
\\
\indent Thus, $\phi^{T_N}$ is also a minimizer of $\mathcal{F}^N$,so we have the following equality :
\begin{align*}
\mu^N(g^N,\tau_0) \leqslant \mathcal{W}^N\left(g^N,\left[\phi^{T_N}-\frac{n}{2}\log(4\pi \tau_0)\right],\tau_0\right) =& \mathcal{W}^N\left(\left[\phi^{T_N}-\frac{n}{2}\log(4\pi T_N)\right],g^N,T_N\right)\\
&+ \lambda^N(\tau_0-T_N)-\frac{n}{2}\log\frac{\tau_0}{T_N}\\
=&\mu(g^N,T_N)+ \lambda^N(\tau_0-T_N)-\frac{n}{2}\log\frac{\tau_0}{T_N}.
\end{align*}
So there is equality in the inequality 
\begin{align*}
\mu^N(\tau) \geqslant \mu^N(T_N)+ \lambda^N(\tau-T_N)-\frac{n}{2}\log\frac{\tau}{T_N},
\end{align*}
so for all $T_N<\tau<\tau_0$, $\phi^\tau$ minimizes $\mathcal{W}^N(.,g^N,\tau)$,
\\
that is :
$$\phi^{\tau_0} = \phi^{T_N}.$$
\end{proof}
\end{lem}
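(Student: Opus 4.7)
The plan is to exhibit the constant function $\phi^{T_N}=\phi_N$ as a test function at $\tau\geqslant T_N$ and show that it already attains the lower bound on $\mu^N(\tau)$ provided by Corollary \ref{borne mu}. This forces it to be a minimizer at $\tau$, hence $\phi^\tau=\phi^{T_N}$.

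First I would exploit the Einstein structure. The identity $\Ric=\frac{1}{2T_N}g$ combined with $\nabla^2\phi_N\equiv 0$ gives the gradient shrinking soliton equation $\Ric+\nabla^2\phi_N=\frac{1}{2T_N}g$ at time $\tau=T_N$. By the rigidity in Perelman's monotonicity theorem for $\mathcal{W}$, this means $\phi_N$ is a critical point of $\mathcal{W}^N(\cdot,g^N,T_N)$ under the constraint $\int e^{-\phi}dv=1$; by convexity of the minimizer theory on positive Einstein manifolds (equivalently: $\mathcal{F}(\phi)=4\int|\nabla u|^2+\R$ with $\R$ constant forces the constant function to minimize $\mathcal{F}$), one gets $\phi^{T_N}=\phi_N$ and $\lambda^N=\mathcal{F}(\phi_N)=\R^N=\frac{n}{2T_N}$.

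Next, for $\tau\geqslant T_N$, I would compute directly using the decomposition
\begin{align*}
\mathcal{W}^N\!\left(\phi^{T_N}-\tfrac{n}{2}\log(4\pi\tau),g^N,\tau\right)=\tau\mathcal{F}(\phi^{T_N},g^N)+\mathcal{N}(\phi^{T_N},g^N)-\tfrac{n}{2}\log(4\pi\tau)-n.
\end{align*}
Subtracting the same expression at $\tau=T_N$ (which equals $\mu^N(T_N)$ by Step 1) collapses the Nash entropy contribution and yields
\begin{align*}
\mathcal{W}^N\!\left(\phi^{T_N}-\tfrac{n}{2}\log(4\pi\tau),g^N,\tau\right)=\mu^N(T_N)+(\tau-T_N)\lambda^N-\tfrac{n}{2}\log\tfrac{\tau}{T_N}.
\end{align*}

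Finally I would invoke the sharp lower bound of Corollary \ref{borne mu}, namely $\mu^N(\tau)\geqslant \mu^N(T_N)+(\tau-T_N)\lambda^N-\tfrac{n}{2}\log(\tau/T_N)$. Combined with the trivial upper bound $\mu^N(\tau)\leqslant \mathcal{W}^N(\phi^{T_N}-\tfrac{n}{2}\log(4\pi\tau),g^N,\tau)$ coming from the definition of the infimum, the two meet in equality. Hence $\phi^{T_N}$ realizes $\mu^N(\tau)$, so $\phi^\tau=\phi^{T_N}$ for every $\tau\geqslant T_N$. The only delicate point is the identification of $\phi^\tau$ with $\phi^{T_N}$ rather than merely saying that the constant is \emph{a} minimizer; this is handled by the strict monotonicity/convexity properties of $\mathcal{W}$ on positive Einstein backgrounds, where the Euler–Lagrange equation $-4\Delta u+\R u-(\R+\frac{n}{\tau})u=0$ with $\R$ constant and $u>0$ admits, up to the $L^2$ normalization, the constant as its unique positive solution in the lowest eigenspace.
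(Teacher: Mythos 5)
Your core argument is the same as the paper's: exhibit the constant $\phi_N$ as a competitor at $\tau\geqslant T_N$, use that $\lambda^N=\R^N=\tfrac{n}{2T_N}$ together with the decomposition $\mathcal{W}=\tau\mathcal{F}+\mathcal{N}-\tfrac{n}{2}\log(4\pi\tau)-n$ to show it attains the lower bound of Corollary~\ref{borne mu}, and conclude from the forced equality that the constant minimizes $\mathcal{W}^N(\cdot,g^N,\tau)$. The paper phrases the computation as a difference of two values of $\mathcal{W}$ rather than writing out $\tau\mathcal{F}+\mathcal{N}$ explicitly, but the content is identical.

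The one genuine error is in your concluding uniqueness paragraph. The Euler--Lagrange equation you wrote, $-4\Delta u+\R u-(\R+\tfrac{n}{\tau})u=0$, is not the equation satisfied by a constrained minimizer of $\mathcal{W}(\cdot,g,\tau)$; for constant $\R$ it reduces to $-4\Delta u=\tfrac{n}{\tau}u$, and a positive constant $u$ does not solve this (the lowest eigenvalue of $-4\Delta$ is $0$, not $\tfrac{n}{\tau}>0$), so the equation you invoke is inconsistent even with your own candidate minimizer. The actual Euler--Lagrange equation for a minimizer $u$ with $\int u^2\,dv=1$ is $\tau(-4\Delta u+\R u)-u\log(u^2)=\left(\mu(g,\tau)+\tfrac{n}{2}\log(4\pi\tau)+n\right)u$, which is \emph{nonlinear} in $u$ because of the $u\log(u^2)$ term; uniqueness of its positive normalized solutions is therefore not a simple lowest-eigenspace statement and would need a separate convexity or rigidity argument. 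To be fair, the paper's own proof also does not cleanly separate the claim that the constant achieves $\mu^N(\tau)$ from the claim that the minimizer is unique, so your outline is not substantively weaker than the source; but you should delete the incorrect linear equation and either leave uniqueness at the level of the paper's implicit assertion, or cite the relevant minimizer-uniqueness result for the nonlinear equation.
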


As a consequence :

\begin{cor}
	If $N$ is a positively curved Einstein manifold, such that $$\Ric = \frac{1}{2T_N}g,$$ (\emph{of shrinking time $T_N$}).
	\\
	
	Then, for all $\tau\geqslant T_N$, the minimizing function for $\mathcal{W}^N(.,g^N,\tau)$ is constant, and :
	\begin{align}
	\mu^N(\tau) = \mu^N(T_N)+ \lambda^N(\tau-T_N)-\frac{n}{2}\log\frac{\tau}{T_N}
	\end{align}
	that is :
	
	If $\tau\geqslant T_N$ ,
	
	then :
	\begin{align}
	\mu^N(\tau,g^N) = \tau\lambda^N+\log(vol(N))-\frac{n}{2}\log(4\pi\tau)-n.
	\end{align}
	\indent If $\tau\leqslant T_N$,

 then :
	\begin{align}
	0\geqslant\mu^N(\tau,g^N) \geqslant \mu^N(T_N,g^N) = \frac{n}{2}+\log(vol(N))-\frac{n}{2}\log(4\pi T_N)-n.
	\end{align}
	\begin{note}
		In particular, the $\mu$-functional of a manifold with $\Ric = g$ for large $\tau$ only depends on the volume of the manifold.
	\end{note}
\end{cor}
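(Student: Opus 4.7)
The plan is to reduce the corollary to a direct computation using the preceding lemma for $\tau \geqslant T_N$, and to a short monotonicity argument for $\tau \leqslant T_N$.

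First I would handle the case $\tau \geqslant T_N$. The preceding lemma already identifies the minimizer: for every $\tau \geqslant T_N$, the function $\phi^\tau$ minimizing $\mathcal{W}^N(\,\cdot\,-\tfrac{n}{2}\log(4\pi\tau),g^N,\tau)$ coincides with $\phi^{T_N}$, which is the constant $\phi_N = \log(\vol(N))$ (by the normalization $\int_N e^{-\phi_N}dv = 1$). For an Einstein manifold with $\Ric = \frac{1}{2T_N}g^N$ one has $\R^N = \frac{n}{2T_N}$ constant, and plugging the constant potential $\phi_N$ into the minimum characterization of $\lambda^N$ gives $\lambda^N = \frac{n}{2T_N}$. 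Then one simply evaluates
\begin{align*}
\mu^N(\tau,g^N) &= \int_N\bigl[\tau(|\nabla\phi_N|^2+\R^N)+\phi_N-\tfrac{n}{2}\log(4\pi\tau)-n\bigr]e^{-\phi_N}dv\\
&= \tau\lambda^N+\log(\vol(N))-\tfrac{n}{2}\log(4\pi\tau)-n,
\end{align*}
since $|\nabla\phi_N|=0$ and $\int e^{-\phi_N}dv=1$. Setting $\tau=T_N$ yields the stated value $\mu^N(T_N)=\frac{n}{2}+\log(\vol(N))-\frac{n}{2}\log(4\pi T_N)-n$.

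For $\tau\leqslant T_N$ the strategy is to show that $\tau\mapsto\mu^N(\tau,g^N)$ is nonincreasing on $(0,T_N]$, then combine this with $\lim_{\tau\to 0^+}\mu^N(\tau,g^N)=0$. The key point is that along the Ricci flow starting at $g^N$, the metric evolves only by scaling, $g(t)=(1-t/T_N)g^N$, so by parabolic scale invariance
\begin{align*}
\mu\bigl(g(t),\tau_0-t\bigr) = \mu\Bigl(g^N,\ T_N\tfrac{\tau_0-t}{T_N-t}\Bigr).
\end{align*}
The monotonicity of $\mu$ along the Ricci flow (Theorem on $\mathcal{W}$) then gives, for any $\tau_0\in(0,T_N)$ and any $t\in[0,\tau_0)$,
\begin{align*}
\mu^N(\tau_0,g^N)\leqslant \mu^N\!\left(T_N\tfrac{\tau_0-t}{T_N-t},\,g^N\right).
\end{align*}
As $t$ runs over $[0,\tau_0)$ the right-hand argument sweeps out all values in $(0,\tau_0]$ (it is strictly decreasing in $t$ because $\tau_0<T_N$). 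This yields the monotonicity on $(0,T_N)$, and upper semicontinuity (Lemma A.1 in the appendix) extends it to $T_N$, giving $\mu^N(\tau,g^N)\geqslant\mu^N(T_N,g^N)$. The upper bound $\mu^N(\tau,g^N)\leqslant 0$ follows either from the same monotonicity combined with $\lim_{\tau\to 0^+}\mu^N(\tau,g^N)=0$, or from the standard comparison with concentrated Gaussian test functions.

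The only delicate step is checking the direction of the monotonicity in $\tau$, i.e.\ that the rescaling map $t\mapsto T_N(\tau_0-t)/(T_N-t)$ is decreasing when $\tau_0<T_N$; this is a one-line computation of its derivative, which has sign $\tau_0-T_N<0$. The rest is bookkeeping with the parabolic-scaling invariance of $\mu$.
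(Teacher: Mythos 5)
Your treatment of the case $\tau\geqslant T_N$ is correct and matches what the paper implicitly does: once the preceding lemma identifies the constant $\phi_N=\log(\vol N)$ as the minimizer, the formula follows by a one-line evaluation of $\mathcal{W}$ at the constant, using $\lambda^N=\R^N=\frac{n}{2T_N}$, $|\nabla\phi_N|=0$, and $\int e^{-\phi_N}dv=1$; setting $\tau=T_N$ gives the stated value. The rescaling idea for $\tau<T_N$ (writing $\mu(g(t),\tau_0-t)=\mu\bigl(g^N,T_N\frac{\tau_0-t}{T_N-t}\bigr)$ and invoking the monotonicity of $\mathcal{W}$) is a clean way to show that $\mu^N$ is nonincreasing on $(0,T_N)$, and the upper bound $\mu^N(\tau)\leqslant 0$ does follow from that monotonicity together with $\lim_{\tau\to 0^+}\mu^N(\tau)=0$.

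However, there is a genuine gap in the final step of the $\tau\leqslant T_N$ case. From your monotonicity on $(0,T_N)$ you obtain $\mu^N(\tau)\geqslant L:=\lim_{\tau'\to T_N^-}\mu^N(\tau')$, and what you still need is $L\geqslant\mu^N(T_N)$. But Lemma A.1 (which you cite as ``upper semicontinuity'') gives the \emph{opposite} inequality. Concretely, the proof of Lemma A.1 establishes, for $\tau_1>\tau_2$,
\begin{align*}
\mu(g_0,\tau_1)\geqslant\mu(g_0,\tau_2)+(\tau_1-\tau_2)\lambda^N-\tfrac{n}{2}\log\tfrac{\tau_1}{\tau_2},
\end{align*}
and letting $\tau_2\to\tau_1^-$ yields $\mu^N(\tau_1)\geqslant\limsup_{\tau_2\to\tau_1^-}\mu^N(\tau_2)$. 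At $\tau_1=T_N$ this reads $L\leqslant\mu^N(T_N)$, i.e.\ upper semicontinuity from the left --- precisely the wrong direction. What is required is lower semicontinuity from the left at $\tau=T_N$, which neither the lemma nor your rescaling argument (which always fixes the point $\tau=T_N$ and therefore cannot cross it) provides. To close this, you would need to invoke continuity of $\tau\mapsto\mu(g,\tau)$ on a compact manifold (a standard but nontrivial consequence of existence and regularity of $\mathcal{W}$-minimizers, going back to Rothaus), or produce some other argument, e.g.\ bounding $\mathcal{F}(\phi^\tau)$ for the minimizers $\phi^\tau$ as $\tau\to T_N^-$. As written, the chain ``monotonicity on $(0,T_N)$ $+$ upper semicontinuity $\Rightarrow$ $\mu^N(\tau)\geqslant\mu^N(T_N)$'' does not go through.
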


\subsection{Smoothing rotationally symmetric cones by manifolds of high entropy}
In this appendix, we provide an explicit computation of the smoothing process of the last section of the paper applied to spheres. That will illustrate last step of the construction of a renormalization of the Ricci flow preserving a $P(\beta_1',\beta_2')$ property. Where a ball centered at the "tip" of the manifold is arbitrarily close to that of a cone over a sphere.

Since these are Einstein manifolds, there is no need to use a renormalized flow (they would all be constant), but only the last step of scaling the link.

 The result is :
\begin{lem}
For all $n$, there exists $\beta_1(n)$ and $\beta_2(n)$ such that :
\\
For all $\beta\in(\beta_1,\beta_2)$, the cone $C(\beta\mathbb{S}^n)$ satisfies 
$$\nu^{C(\beta\mathbb{S}^n)}> \eta_n,$$
where $\eta_n$ is the number defined in the global pseudolocality lemma.

 Moreover, there exists $M_\beta$ a manifold smoothing it out such that :
$$\nu^{M_\beta}>\eta_n.$$
\begin{rem}
In dimension $3$, given the value of $\eta_3$, it is possible to choose $\beta_1(3) = \frac{2}{e} \approx 0.74$ and $\beta_2 = \sqrt{\frac{2e}{e+2}} \approx 1.07$.
\end{rem}
\end{lem}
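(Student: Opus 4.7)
The plan is to combine two independent pieces: a lower bound on the cone entropy $\nu^{C(\beta\mathbb{S}^n)}$ via the perturbation corollary of Section 2, and a smoothing construction whose entropy is controlled by the appendix A.3 estimate. The condition $\beta\in(\beta_1,\beta_2)$ will be obtained simply by continuity, starting from the fact that the round unit sphere is the \emph{unique} extremal case where everything is sharp (the Euclidean cone $C(\mathbb{S}^n)=\mathbb{R}^{n+1}$ has $\nu=0$).

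First I would bound $\nu^{C(\beta\mathbb{S}^n)}$ from below. By the remark at the end of appendix A.4, the round sphere of radius $\beta$ satisfies the lower bound $L(\epsilon_1,\epsilon_2,\epsilon_3)$ with explicit parameters: if $\beta\geqslant 1$ then $(\epsilon_1,\epsilon_2,\epsilon_3)=(1-\beta^{-2},0,0)$, and if $\beta\leqslant 1$ then $(0,0,-n\log\beta)$. All three vanish at $\beta=1$. Applying the corollary to Proposition on lower bounds for $\nu^{C(N)}$ (the one giving $\nu^{C(N)}\geqslant -\Psi(\epsilon_1,\epsilon_2,\epsilon_3)$ with $\Psi\to 0$ as $(\epsilon_i)\to 0$), we get $\nu^{C(\beta\mathbb{S}^n)}\geqslant -\Psi(\beta)$ where $\Psi(\beta)\to 0$ as $\beta\to 1$. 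By continuity choose $\beta_1<1<\beta_2$ so that $\Psi(\beta)<\eta_n/2$ on $(\beta_1,\beta_2)$.

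Next I would carry out the smoothing. Since $\beta\mathbb{S}^n$ is Einstein there is no need to run an actual renormalized Ricci flow; I would simply pick a smooth monotone function $\chi:[0,\infty)\to [1,\beta^2]$ (or $[\beta^2,1]$ depending on the ordering) with $\chi(s)=\beta^2$ for $s$ small and $\chi(s)=1$ for $s$ large, exponentially quickly, and define
\begin{equation*}
(M_\beta,g^{\delta}) := \left(\mathbb{R}^+\times \mathbb{S}^n,\ dr^2 + r^2 \chi(\delta/r^2)\, g^{\mathbb{S}^n}\right),
\end{equation*}
with $\delta>0$ a small parameter. At $r\to 0$ the link becomes the unit sphere so $g^\delta$ extends smoothly across the origin to give a metric on $\mathbb{R}^{n+1}$; at $r\to\infty$ it is asymptotic to the cone $C(\beta\mathbb{S}^n)$. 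Appendix A.3 computes the entropy of this warped product as that of a ``cone at each scale'' plus an error term uniformly $\mathcal{O}(\delta)$ coming from $\R_{rest}$. Since each frozen link $\chi(\delta/r^2)g^{\mathbb{S}^n}$ is itself a sphere of radius between $1$ and $\beta$, which still satisfies $L(\epsilon_1',\epsilon_2',\epsilon_3')$ with small parameters, the ``cone at each scale'' term is bounded below by the same $-\Psi$-type quantity uniformly in $r$. Thus
\begin{equation*}
\nu^{M_\beta}(g^\delta)\geqslant -\Psi(\beta) - C\delta,
\end{equation*}
and taking $\delta=\delta(\beta,n)$ sufficiently small yields $\nu^{M_\beta}>-\eta_n$, as required.

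The main obstacle is making the uniform $\mathcal{O}(\delta)$ control on $\R_{rest}$ (and hence on the $\mathcal{W}$-functional) actually uniform in $r\in(0,\infty)$: the metric variation $\chi(\delta/r^2)$ has to decay fast enough in every derivative as $r\to 0$ and $r\to\infty$ for the computation of appendix A.3 to apply on the whole half-line. This is why I would take $\chi$ with exponentially fast convergence to its limits, mirroring the exponentially fast convergence invoked in A.3; spheres being rigid Einstein, this is at one's disposal by hand. Once that is granted, the proof is essentially a two-step continuity argument around $\beta=1$, and the explicit values in dimension $3$ ($\beta_1=2/e$, $\beta_2=\sqrt{2e/(e+2)}$) follow from plugging Perelman's $\eta_3=1-\log 2$ into the bookkeeping of $\Psi$.
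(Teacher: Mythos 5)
Your proof of the first statement — that $\nu^{C(\beta\mathbb{S}^n)}>-\eta_n$ for $\beta$ close enough to $1$ — is exactly the paper's route: combine the remark at the end of Appendix A.4 (round sphere $\beta\mathbb{S}^n$ satisfies $L(\epsilon_1,\epsilon_2,\epsilon_3)$ with $\epsilon_i\to 0$ as $\beta\to 1$) with the Corollary in Section 2.3.2 giving $\nu^{C(N)}\geqslant -\Psi(\epsilon_1,\epsilon_2,\epsilon_3)$, and conclude by continuity.

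For the second statement (construction of $M_\beta$), you take a genuinely different route from the paper's proof in Appendix B.2. You set $g^\delta = dr^2 + r^2\chi(\delta/r^2)g^{\mathbb{S}^n}$ with $\chi$ interpolating between the unit sphere near the tip and $\beta^2$ at infinity, and invoke the $\mathcal{O}(\delta)$ error estimate of Appendix A.3. This is precisely the general machinery of Section 4.1 (applied to the trivial renormalized flow on an Einstein link), and it is a valid argument. The paper's own Appendix B.2 instead constructs an \emph{explicit piecewise} warping function $h$ (linear $r/\beta$ near the tip, a quadratic bridge on a finite interval $[1,b]$, then $r-b+h(b)$ at infinity), performs the change of variables $\rho = h(r)$ with the conformal factor $w(r,\cdot)=\sqrt{h'(r)}\,v(h(r),\cdot)$, and compares $\mathcal{W}^{M}(w)$ directly to $\mathcal{W}^{C(\beta\mathbb{S}^n)}(v)$ (or $\mathcal{W}^{\mathbb{R}^{n+1}}(v)$ when $\beta<1$), yielding the $\tau$-independent gap $n\log\beta + \mathcal{O}(A^{-1})$. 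The trade-off: your route is more systematic and immediately generalizes to non-Einstein links (in fact it is essentially the argument used in the body of the paper), but the constants you obtain come through $\Psi$ and are quantitatively weaker; the paper's explicit change-of-variables comparison is what produces the sharper dimension-$3$ window $(\,2/e,\ \sqrt{2e/(e+2)}\,)$, which your $\Psi$-bookkeeping would not reproduce exactly — indeed the $\Psi$-based remark at the end of A.4 only gives the narrower interval $[0.77,1.05]$.

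One small bookkeeping point: the error coming from $\R_{\mathrm{rest}}=\mathcal{O}(\delta)$ in Appendix A.3 is weighted by $\tau/r^2$, so it is not an additive constant $C\delta$ as you wrote; it must be absorbed into the $\epsilon_1$ slot of $L(\epsilon_1+\mathcal{O}(\delta),\epsilon_2,\epsilon_3)$ and fed through the Hardy inequality, exactly as Section 4.1 does, giving $\nu^{M_\beta}\geqslant -\Psi(\epsilon_1+\mathcal{O}(\delta),\epsilon_2,\epsilon_3)$. Since $\Psi$ is continuous in $\epsilon_1$ away from the Hardy threshold this still closes the argument, but the statement as you wrote it is not quite the estimate A.3 provides.
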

\begin{proof}
The first statement is a direct consequence of the lower bounds found on the $\nu$-functional of cones thanks to some closeness to the unit sphere. So we will now focus on the construction of the manifold $M_\beta$.
\\
\\
\indent We will smoothen the cone by the euclidean space around its tip by warped product : $(M,g) = (\mathbb{R}^+\times \mathbb{S}^n,dr^2+h^2g^{\mathbb{S}^n})$
\\
\indent Let us define $h : [0,\infty) \to \mathbb{R}^+$
\begin{enumerate}
\item $h(r) = \frac{r}{\beta}$ \indent for $0\leqslant r\leqslant 1$,
\item $h(r) = \frac{r}{\beta}+ \frac{(r-1)^2}{2\beta A}$ \indent for $1\leqslant r\leqslant b = 1+A(\beta-1)$,
\item $h(r) = r-b+h(b)$ \indent for $r\geqslant b$.
\end{enumerate}
This gives :
\begin{enumerate}
\item $h'(r) = \frac{1}{\beta}$ \indent for $0\leqslant r\leqslant 1$,
\item $h'(r) = \frac{1}{\beta}+ \frac{r-1}{\beta A}$ \indent for $1\leqslant r\leqslant b = 1+A(\beta-1)$,
\item $h'(r) = 1$ \indent for $r\geqslant b$,
\end{enumerate}
and
\begin{enumerate}
\item $h''(r) = 0$ \indent for $0\leqslant r\leqslant 1$,
\item $h''(r) = \frac{1}{\beta A}$ \indent for $1\leqslant r\leqslant b = 1+A(\beta-1)$,
\item $h''(r) = 0$ \indent for $r\geqslant b$.
\end{enumerate}
Let us now consider $(M,g) = (\mathbb{R}^+\times \beta \mathbb{S}^n,dr^2+h^2g_{\beta\mathbb{S}^n})$ and $f: M\to \mathbb{R}$ such that $\int_M \frac{e^{-f}}{(4\pi\tau)^{\frac{n+1}{2}}}$,
and note $\beta \mathbb{S}^n = N$.
\\

\begin{note}
	Here, we note $w$ such that $w^2 = e^{-f}$, it satisfies $\int_M w^2dv = (4\pi\tau)^{\frac{n+1}{2}}$.
\end{note} 

Using the following general formula for the entropy of a warped product :
\begin{align*}
\mathcal{W}^M(f,dr^2+h^2g_N,\tau) =& \int_0^{+\infty}h^n\int_N\left[\tau\left(4(\partial_r w)^2 +\frac{4|\nabla^Nw|^2+(\R^N-n(n-1)(h')^2-2n h h'')w^2}{h^2}\right)\right. \\
&\left.- w^2 \log(w^2)-(n+1) w^2\right](4\pi\tau)^{-\frac{n+1}{2}}dv dr,
\end{align*} 
we get :
\begin{align*}
\mathcal{W}^M(f,dr^2+h^2g_N,\tau) =& \int_0^{1}\left(\frac{r}{\beta}\right)^n\int_N\left[\tau\left(4(\partial_r w)^2 +\frac{4|\nabla^Nw|^2}{(\frac{r}{\beta})^2}\right)\right. \\
&\left.- w^2 \log(w^2)-(n+1) w^2\right](4\pi\tau)^{-\frac{n+1}{2}}dv dr\\
&+\int_1^{b}h^n\int_N\left[\tau\left(4(\partial_r w)^2 +\frac{4|\nabla^Nw|^2+(n(n-1)(\beta^{-2}-(h')^2)-2n h h'')w^2}{h^2}\right)\right. \\
&\left.- w^2 \log(w^2)-(n+1) w^2\right](4\pi\tau)^{-\frac{n+1}{2}}dv dr\\
&+\int_{h(b)}^{+\infty}r^n\int_N\left[\tau\left(4(\partial_r w)^2 +\frac{4|\nabla^Nw|^2+n(n-1)(\beta^{-2}-1)w^2}{r^2}\right)\right. \\
&\left.- w^2 \log(w^2)-(n+1) w^2\right](4\pi\tau)^{-\frac{n+1}{2}}dv dr,\\
&=(1)+(2)+(3)
\end{align*} 
where we have explicited the last term to show that it is corresponding to what we would get with $C(N)$.

 In the second term, for A big enough, the term involving $h''$ is like $A^{-1}$ times the term in $h'$. We will assume that A is big enough to forget about $h''$ (we only have strict inequalities in the end).
\\

 Let us consider $v$ a function on $C(N)$ such that :
$$\int_{C(N)}v^2(4\pi\tau)^{-\frac{n+1}{2}} = 1.$$
\\

 We are going to define $w=\phi(v)$ a function on $M$ such that :
$$\int_M w^2(4\pi\tau)^{-\frac{n+1}{2}} = 1.$$
\indent Where $\phi$ is a natural one to one correspondance. We will then prove that the entropy of $w$ on $M$ is smaller than the entropy of $v$ on $C(n)$.

 Define :
$$w(r,.) = \sqrt{h'(r)}v(h(r),.),$$
we have,
$$\int_M w^2(4\pi\tau)^{-\frac{n+1}{2}} = 1.$$

 And by the change of variable $\rho = h(r)$, we get :
\begin{align*}
\mathcal{W}^M&(f,dr^2+h^2g_N,\tau) = \int_0^{\frac{1}{\beta}}(\rho)^n\int_N\left[\tau\left(4(\partial_\rho v)^2 +\frac{4|\nabla^Nv|^2}{\rho^2}\right)\right. \\
&\left.- v^2 \left(\log(v^2)+n\log\beta\right)-(n+1) v^2\right](4\pi\tau)^{-\frac{n+1}{2}}dv d\rho\\
&+\int_{\frac{1}{\beta}}^{h(b)}\rho^n\int_N\left[\tau\left(4(\partial_\rho v)^2 +\frac{4|\nabla^Nv|^2+n(n-1)\left[\beta^{-2}-\left((h' o h^{-1}(\rho)\left(1+\mathcal{O}(\frac{1}{A})\right)v^2+\frac{1}{2}\log h'o h^{-1}\right)\right]}{\rho^2}\right)\right. \\
&\left.- v^2 \log(v^2)-(n+1) v^2\right](4\pi\tau)^{-\frac{n+1}{2}}dv d\rho\\
&+\int_{h(b)}^{+\infty}\rho^n\int_N\left[\tau\left(4(\partial_\rho v)^2 +\frac{4|\nabla^Nv|^2+n(n-1)(\beta^{-2}-1)v^2}{\rho^2}\right)\right. \\
&\left.- v^2 \log(v^2)-(n+1) v^2\right](4\pi\tau)^{-\frac{n+1}{2}}dv d\rho\\
&=(1)+(2)+(3).
\end{align*} 
\indent Now, what we would have got with the cone is :
\begin{align*}
\mathcal{W}^{C(N)}(v,dr^2+r^2g_N,\tau) =& \int_0^{\frac{1}{\beta}}(\rho)^n\int_N\left[\tau\left(4(\partial_\rho v)^2 +\frac{4|\nabla^Nv|^2+n(n-1)(\beta^{-2}-1)v^2}{\rho^2}\right)\right. \\
&\left.- v^2 \log(v^2)-(n+1) v^2\right](4\pi\tau)^{-\frac{n+1}{2}}dv d\rho\\
&+\int_{\frac{1}{\beta}}^{h(b)}\rho^n\int_N\left[\tau\left(4(\partial_\rho v)^2 +\frac{4|\nabla^Nv|^2+n(n-1)(\beta^{-2}-1)v^2}{\rho^2}\right)\right. \\
&\left.- v^2 \log(v^2)-(n+1) v^2\right](4\pi\tau)^{-\frac{n+1}{2}}dv d\rho\\
&+\int_{h(b)}^{+\infty}\rho^n\int_N\left[\tau\left(4(\partial_\rho v)^2 +\frac{4|\nabla^Nv|^2+n(n-1)(\beta^{-2}-1)v^2}{\rho^2}\right)\right. \\
&\left.- v^2 \log(v^2)-(n+1) v^2\right](4\pi\tau)^{-\frac{n+1}{2}}dv d\rho\\
&=(1')+(2')+(3').
\end{align*} 
\indent If we assume $\beta>1$, by taking the difference of these two expressions, we get :
$$\mathcal{W}^{M}(w,dr^2 + r^2g_N,\tau)-\mathcal{W}^{C(\beta\mathbb{S}^n)}(v,dr^2 + h^2g_N,\tau) >-(n\log \beta+\mathcal{O}(A)).$$
\indent In the case of a $\beta<1$, it is more convenient to compare to the Euclidean space. The computation is exacly the same, and we get :
$$\mathcal{W}^{M}(w,dr^2 + r^2g_N,\tau)-\mathcal{W}^{\mathbb{R}^n}(v,dr^2 + h^2g_N,\tau) >(n\log \beta+\mathcal{O}(A)).$$
\begin{rem}
It is crucial that the right term doesn't depend on $\tau$.
\end{rem}
\indent In both cases, if $\beta$ is close enought to $1$ and $A$ large enough in our parametrization, we have $\nu^M>\eta_n$
\end{proof}

\section{Renormalizations of the Ricci flow}
In this section we first introduce a few renormalization of the Ricci flow and discuss their different properties.

 We will then use them to, on the one hand smooth out some cones by manifolds with higher entropy and on the other hand to define a flow on the link only that increases the entropy of a cone.

 Let us define a few renormalizations of the Ricci flow. Some of them are motivated by the constance of a quantity, and others by the fact that some of perelman's quantities increase, at $\tau$ fixed.

 All of the flows introduced will act both on $g$ and a potential $f$. 

 We will consider renormalization of the following form :
$$
\left\{
\begin{array}{ll}
\partial_t g &= -2\left( \Ric+Hess f-\frac{\alpha}{n}g \right)\\
\partial_t f &= -\Delta f-\R+\alpha
\end{array}
\right.
$$
\indent We will be choosing some quite natural values for $\alpha$ (a lot more values of $\alpha$ give interesting properties to the flow)
\subsection{Evolution of some geometric quantities along the flows}
Let us compute the evolution of Perelman's quantities and other geometric quantities along such a flow.
\paragraph{\textbf{The scalar curvature}}
We have the following expression :
\begin{align}
\partial_t \R &= \Delta \R + 2\left<\Ric,\Ric-\frac{\alpha}{n}g\right> + \mathcal{L}_{\nabla f}\R\nonumber\\
&=\Delta \R + 2|\Ric|^2-2\frac{\alpha}{n}\R + \mathcal{L}_{\nabla f}\R \nonumber\\
&\geqslant \Delta \R + 2\frac{\R-\alpha}{n}\R + \mathcal{L}_{\nabla f}\R. \label{varR}
\end{align}
\paragraph{\textbf{The volume}}
We have the following variation of the volume :
\begin{align}
\partial_t\left(vol(N)\right) &= \int_N\left(\alpha-\R_{av}\right)dv \nonumber\\
&= \left(\alpha-\R_{av}\right)vol(M). \label{varvol}
\end{align}
Where $\R_{av}$ is the average value of the scalar curvature.
\paragraph{\textbf{Perelman's $\mathcal{F}$-functional}}
The derivative for $\mathcal{F}(f)$ is :
\begin{align}
\partial_t\left(\mathcal{F}(f(t),g(t))\right) &= \int_N\left<\Ric+Hess f-\frac{\alpha}{n}g,\Ric+Hess f\right>dv \nonumber\\
&=\int_N\left|\Ric+Hess f\right|^2dv-\frac{\alpha}{n}\mathcal{F}(f). \label{varF}
\end{align}
\paragraph{\textbf{Perelman's $\mathcal{W}$-functional at $\tau$ fixed}}
\begin{align}
\partial_t\left(\mathcal{W}(f(t),g(t),\tau)\right) &= \int_N\left<\Ric+Hess f-\frac{\alpha}{n}g,\Ric+Hess f-\frac{1}{2\tau}\right>dv\nonumber \\
&=\int_N\left|\Ric+Hess f\right|^2dv-\left(\frac{\alpha}{n}+\frac{1}{2\tau}\right)\mathcal{F}(f)+\frac{\alpha}{2\tau}. \label{varW}
\end{align}
\paragraph{\textbf{Inequalities among some natural quantities}}
:
\\
We have the following inequalities :
\begin{align*}
\R_{min}\leqslant \lambda^N\leqslant \R_{av}\leqslant \R_{max}\leqslant \frac{n}{2T_N},
\end{align*}
where we have noted $f_\mu$ a minimizer of $\mathcal{W}$.
\\
And note that we have equality if the manifold is a positively curved Einstein manifold.
\subsection{Volume preserving - $\alpha = \R_{av}$}
Here we recall the standard renormalization introduced by Hamilton.
\\
\indent It is a flow that preserves the volume of the whole manifold. It is useful to keep track of the limiting object thanks to its volume and some symmetries for example.
\\
\indent Along this flow,
$$
\left\{
\begin{array}{lllll}
\partial_t \R_{av} &\geqslant 0,\\
\partial_t vol &= 0,\\
\partial_t \R_{min}&\geqslant 2\frac{\R_{min}}{n}\left(\R_{min}-\R_{av}\right),\\
\partial_t \lambda &?\\
\partial_t \mu &?\\
\partial_t T_N&\geqslant 0.
\end{array}
\right.
$$
\subsection{Shrinking time preserving - $\alpha = \frac{n}{2T_N}$}
\begin{defn}
\item We will first consider Ricci flows that goes extinct in finite time getting close to a sphere to reduce our problem to the case of  cones over spheres that we have already taken care of. For that, we will define $T_N$ (where we have abusively noted $N = (N,g)$) such that :
\begin{align}
\left(\frac{1}{2(n-1)(T_N-t)}g_t\right) \xrightarrow[t\to T_N]{} g^{\mathbb{S}^n},\label{cvtoS}
\end{align}
where $t\mapsto g_t$ evolves according to the Ricci flow equation and starts at $g_0 = g$.
\end{defn}
This leads to considering the flow :
$$
\left\{
\begin{array}{ll}
\partial_t \hat{g} &= -2\left( \Ric+Hess \hat{f}-\frac{1}{2T_N}\hat{g} \right),\\
\partial_t \hat{f} &= -\Delta \hat{f}-\R+\frac{n}{2T_N}.
\end{array}
\right.
$$
The reason is the following :
\\
\indent Let us note $\hat{N}(t) = (N,\hat{g}(t))$ as an abusive notation, along the renormalized flow just introduced, the shrinking time is preserved :
\begin{lem}
Along the renormalized flow, for all $t$ :
$$T_{\hat{N}(t)} = T_{\hat{N}(0)}.$$
\begin{proof}
Let us consider $(N,g)$ such that \eqref{cvtoS} is satisfied.
\\
Let us consider two families of metrics starting at $g_0$ : 
\begin{itemize}
\item $(g_t)_t$ starting at $g_0$ evolving according to the Ricci flow equation.
\item $(\hat{g}_t)_t$ starting at $g_0$ evolving according to the renormalized Ricci flow equation.
\end{itemize}
By integrating the equation satisfied by $\hat{g}_t$ we have that :
\\
With $$\psi(t) = -T_N\log\left(1-\frac{t}{T_N} \right)\in[0,+\infty],$$
\begin{align}
\hat{g}_{\psi(t)} = \frac{1}{\left(1-\frac{t}{T_N}\right)}g_{t}.
\label{exphatt}
\end{align}
Choosing a $t_0$ and starting a Ricci flow $t\mapsto \tilde{g}^{t_0}_t$ at $\hat{g}_{t_0}$, 
By translation by $t_0$ and parabolic scaling by $\left(1-\frac{t}{T_N}\right)$.
\begin{align}
\left(\frac{1}{2(n-1)(T_N-t)}\tilde{g}^{t_0}_t\right) &= \frac{1}{2(n-1)(T_N-t)\left(1-\frac{t_0}{T_N}\right)}g_{t_0+\left(1-\frac{t_0}{T_N}\right)t} \nonumber \\
&=\left(\frac{1}{2(n-1)(T_N-\tilde{t})}g_{\tilde{t}}\right),\label{**}
\end{align}
with $\tilde{t} = t_0+\left(1-\frac{t_0}{T_N}\right)t$.
\\

Now, by hypothesis :
\begin{align*}
\left(\frac{1}{2(n-1)(T_N-\tilde{t})}g_{\tilde{t}}\right) \xrightarrow[\tilde{t}\to T_N]{} g^{\mathbb{S}^n},
\end{align*}
and by \eqref{**},
$T_{\hat{N}(t)} = T_N$.
\end{proof}
\end{lem}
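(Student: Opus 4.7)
The plan is to exploit the standard fact that the renormalized flow $\hat g$ is, up to time reparametrization, rescaling, and pullback by diffeomorphism, the same as the unrescaled Ricci flow $g$ starting at $g_0$. Since the quantity $T_N$ is defined via smooth convergence (after rescaling) of a Ricci flow, it is invariant under these three operations, so the equality $T_{\hat N(t)}=T_{\hat N(0)}$ should follow.

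Concretely, first I would exhibit the correspondence explicitly. Setting $\psi(t)=-T_N\log(1-t/T_N)$, so that $\psi:[0,T_N)\to[0,\infty)$ is a bijection, I would check by differentiating in $t$ that
\[
\hat g_{\psi(t)} \;=\; \tfrac{1}{1-t/T_N}\,\phi_t^{*}g_t
\]
for a one-parameter family of diffeomorphisms $\phi_t$ generated by $-\nabla \hat f$: the $\mathrm{Hess}\,\hat f$ term is absorbed as $\tfrac12\mathcal L_{\nabla\hat f}\hat g$ into $\phi_t^{*}$, the scalar $\tfrac{1}{T_N}\hat g$ term produces the exponential scale factor $e^{\psi(t)/T_N}=(1-t/T_N)^{-1}$, and the remaining $-2\mathrm{Ric}$ term is scale-invariant and uses the time reparametrization.

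Next, fix $t_0\in[0,\infty)$ and start a fresh Ricci flow $s\mapsto \tilde g^{\,t_0}_s$ at $\hat g_{t_0}$. By the previous step, $\hat g_{t_0}$ is isometric to $\tfrac{1}{1-\tau_0/T_N}g_{\tau_0}$ with $\tau_0:=\psi^{-1}(t_0)$. By diffeomorphism invariance and the parabolic rescaling invariance of Ricci flow (if $g_t$ is a Ricci flow and $c>0$, then $s\mapsto c\,g_{s/c}$ is a Ricci flow), one obtains
\[
\tilde g^{\,t_0}_s \;\simeq\; \tfrac{1}{1-\tau_0/T_N}\, g_{\,\tau_0+(1-\tau_0/T_N)s}
\]
up to a diffeomorphism. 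Substituting $\tilde t=\tau_0+(1-\tau_0/T_N)s$, a direct algebraic simplification gives
\[
\tfrac{1}{2(n-1)(T_N-s)}\tilde g^{\,t_0}_s \;=\; \tfrac{1}{2(n-1)(T_N-\tilde t)}\,g_{\tilde t},
\]
and the hypothesis $T_{\hat N(0)}=T_N$ (i.e.\ that the right-hand side converges to $g^{\mathbb S^n}$ as $\tilde t\to T_N$) together with the fact that $s\to T_N$ iff $\tilde t\to T_N$ yields the convergence to $g^{\mathbb S^n}$ for $\tilde g^{\,t_0}_s$, hence $T_{\hat N(t_0)}=T_N$.

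The main obstacle is the bookkeeping needed to verify the correspondence $\hat g_{\psi(t)}\simeq \tfrac{1}{1-t/T_N}g_t$: one must carefully separate the contribution of the diffeomorphism (coming from $-2\,\mathrm{Hess}\,\hat f$) from the pure scaling (coming from $\tfrac{1}{T_N}\hat g$), and then check that the notion of shrinking time used in \eqref{cvtoS} is genuinely invariant under pullback by diffeomorphism and parabolic rescaling. Once this identification is in hand, the time-change calculation is routine.
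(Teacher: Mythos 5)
Your argument follows the same route as the paper: establish the explicit correspondence between $\hat g_{\psi(t)}$ and the parabolically rescaled flow $g_t$ by integrating the renormalized equation, then transport the hypothesis that $\frac{1}{2(n-1)(T_N-\tilde t)}g_{\tilde t}\to g^{\mathbb S^n}$ via the time change $\tilde t = \tau_0 + (1-\tau_0/T_N)s$ to conclude that the shrinking time is unchanged. You are somewhat more careful than the paper's write-up in explicitly recording the diffeomorphism pullback $\phi_t^*$ generated by $-\nabla\hat f$ (coming from the $\mathrm{Hess}\,\hat f$ term), which the paper's equation \eqref{exphatt} omits; this omission is harmless since $T_N$ as defined by \eqref{cvtoS} is invariant under pullback by diffeomorphisms, but your version makes the invariance that is being used explicit.
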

\indent Along this flow,
$$
\left\{
\begin{array}{lllll}
\partial_t \R_{av} & ?\\
\partial_t vol &\geqslant 0\\
\partial_t \R_{min}&\geqslant 2\frac{\R_{min}}{n}\left(\R_{min}-\left(\frac{n}{2T_N}\right)\right)\\
\partial_t \lambda &?\\
\partial_t \mu(f_t,g_t,\tau) &\geqslant 0 \text{, if $\tau\leqslant\frac{n}{2\mathcal{F}(f_\mathcal{W})}\leqslant T_N$}\\
\partial_t T_N& = 0.
\end{array}
\right.
$$

\section{Blow down of type III immortal solutions with high $\nu$-functional}
We have been able to construct some nonsingular Ricci flows, that have a global curvature decay in $\frac{C}{t}$. We call them \textit{type III} solutions of the Ricci flow. 

 The goal would be to construct Ricci flows smoothing out cones thanks to them.

 For this purpose, we want to look at what is happening at large times, this is done by parabolically scaling down our Ricci flow and taking a limit.
\\
\begin{def}
\textit{Scaling down} a Ricci flow $(M,g(t))_t$ corresponds intuitively to sending every positive time to $+\infty$ while keeping the metric from being infinite. 

 Formally it is looking at the following sequence of Ricci flows (and its limit when $s\to \infty$ if it exists.)
\begin{align*}
g_s(t) = \frac{1}{s}g(st).
\end{align*}
\end{def}
\subsection{Hamilton's compactness theorem for Ricci flows}
Let us start by presenting Hamilton's compactness theorem for Ricci flows from \cite{ham} that will let us take (sub)limits of Ricci flows, and in particular of sequence of blowdowns under some assumptions.
\begin{thm}
Given $r_0 \in (0, \infty]$, let ${g_i(t)}_i$ be a sequence of Ricci flow solutions on connected \textit{pointed} manifolds $(M_i,g_i(t), m_i)_t$, defined for $t \in (A, B)$ with $-\infty \leqslant A < 0 < B \leqslant \infty$ (note that it is also possible to take a limit of intervals).
\\
\\
\indent We assume that for all i, $M_i$ equals the time-zero ball $B_0(m_i, r_0)$ and for all $r \in (0, r_0)$,
$\overline{B_0(m_i, r)}$ is compact. Suppose that the following two conditions are satisfied :
\begin{itemize}
\item For each $r \in (0, r_0)$ and each compact interval $I \subset (A, B)$, there is an $N_{r,I} < \infty$ so that
for all $t \in I$ and all $i$, 
$$\sup_{B_0(m_i ,r)\times I}| \Rm(g_i)| \leqslant N_{r,I}.$$
\item The time-$0$ injectivity radii at $m_i$ are bounded from below :
\\
There exists $\rho>0$ such that :
$$inj_{g_i(0)}(m_i)>\rho>0.$$

\end{itemize}
Then after passing to a subsequence, the solutions converge smoothly to a Ricci flow solution $g_\infty(t)$ on a connected pointed manifold $(M_\infty, m_\infty)$, defined for $t \in (A, B)$, for which :
\\
$M_\infty = B_0(m_\infty, r_0)$ and $\overline{B_0(m_\infty, r)}$ is compact for all $r \in (0, r_0)$. 

 That is, for any compact interval $I \subset (A, B)$ and any $r < r_0$, there are pointed time-independent diffeomorphisms
$\phi_{r,i} : B_0(m_\infty, r) \to B_0(m_i, r)$ so that ${(\phi_{r,i} \times Id)g_i}$ converges smoothly to $g_\infty$ on $B_0(m_\infty, r) \times I$.
\end{thm}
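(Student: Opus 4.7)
The plan is to combine Shi's local derivative-of-curvature estimates, a Cheeger–Gromov style compactness argument at time $0$, and parabolic Arzelà–Ascoli to extract a smoothly convergent subsequence, and then verify that the limiting family satisfies the Ricci flow equation by passing to the limit in the evolution equation.

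The first step is to upgrade the curvature bound from the hypothesis to bounds on all covariant derivatives of $\Rm(g_i)$. Fix $r \in (0,r_0)$ and a compact subinterval $I' \subset (A,B)$ containing $0$ in its interior; after slightly shrinking $r$ and $I'$ we apply Shi's local derivative estimates to conclude that for every $k \geqslant 0$ there exists $C_{k,r,I'} < \infty$, independent of $i$, such that $|\nabla^k \Rm(g_i)|_{g_i(t)} \leqslant C_{k,r,I'}$ on $B_0(m_i,r) \times I'$. These are the crucial a priori estimates that let one convert $C^0$ curvature control into $C^\infty$ control. Using the evolution equation $\partial_t g_i = -2 \Ric(g_i)$ and these bounds, one gets uniform equivalence of the metrics $g_i(t)$ and $g_i(0)$ on $B_0(m_i, r)$ for $t \in I'$, plus uniform bounds on all $\partial_t^a \nabla^b g_i$ measured in fixed coordinates.

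Next I would produce the limit manifold at time $0$. The injectivity radius lower bound $\operatorname{inj}_{g_i(0)}(m_i) \geqslant \rho$, combined with the uniform curvature bound at $t=0$, yields by standard volume/Rauch comparison a uniform lower bound for $\operatorname{inj}_{g_i(0)}$ on every ball $B_0(m_i, r)$ with $r < r_0$, depending only on $r$ and $N_{r,\{0\}}$. This places the pointed Riemannian manifolds $(M_i, g_i(0), m_i)$ in the hypotheses of the Cheeger–Gromov compactness theorem, so after passing to a subsequence there exists a smooth pointed manifold $(M_\infty, g_\infty(0), m_\infty)$ with $M_\infty = B_0(m_\infty, r_0)$, and for each $r < r_0$ a smooth pointed embedding $\phi_{r,i}\colon B_0(m_\infty, r) \to B_0(m_i, r)$ such that $\phi_{r,i}^* g_i(0) \to g_\infty(0)$ in $C^\infty_{\mathrm{loc}}$.

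Now pull back the whole flow: set $\tilde{g}_i(t) := \phi_{r,i}^* g_i(t)$ on $B_0(m_\infty, r) \times I'$. By the uniform bounds on $\partial_t^a \nabla^b g_i$ obtained in step one, the family $\{\tilde{g}_i\}$ and all its space-time derivatives are uniformly bounded on compact subsets of $B_0(m_\infty, r) \times I'$ (in any fixed coordinate system coming from $g_\infty(0)$). A diagonal Arzelà–Ascoli argument, applied along an exhaustion by $r \nearrow r_0$ and $I' \nearrow (A,B)$, yields a subsequence converging in $C^\infty_{\mathrm{loc}}$ to a smooth family $g_\infty(t)$ on $M_\infty \times (A,B)$. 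Passing to the limit in $\partial_t \tilde{g}_i = -2 \Ric(\tilde{g}_i)$ shows $g_\infty(t)$ is a Ricci flow. The main technical obstacle is the injectivity radius control away from $m_i$: one needs to know that no collapsing occurs inside $B_0(m_i, r)$ so that Cheeger–Gromov applies. This is exactly where the $C^0$ curvature bound, together with the basepoint injectivity bound and volume comparison, is essential, and this is really the heart of the argument; the rest is the parabolic upgrade via Shi and diagonalization.
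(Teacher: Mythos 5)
The paper does not supply a proof of this statement: it is quoted verbatim as Hamilton's compactness theorem and referred to the original source (Hamilton, \emph{A Compactness Property for Solutions of the Ricci Flow}, 1995). So there is no in-paper argument to compare against. Your sketch reproduces the standard line of argument used by Hamilton and in the standard expositions: Shi local estimates to upgrade the $C^0$ curvature bound to $C^\infty$ bounds on parabolic cylinders, propagation of the basepoint injectivity lower bound across the ball via curvature bounds and volume comparison (Cheeger--Gromov--Taylor), Cheeger--Gromov compactness for the time-zero slices to produce $(M_\infty, g_\infty(0), m_\infty)$ and time-independent comparison diffeomorphisms, then a space-time Arzel\`a--Ascoli/diagonalization to obtain the limiting family and passage to the limit in $\partial_t g = -2\Ric$. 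This is the correct decomposition and matches the reference.

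Two places where the sketch is looser than a full proof and would need to be tightened: (i) Shi's local estimates for negative times. As written, they give forward-in-time control from an initial slice; for $t\in I'$ with $\inf I'<0$ you must apply them starting from a slightly earlier time still inside $(A,B)$, which is what ``slightly shrinking $I'$'' is meant to encode, but you should make explicit that the resulting derivative bounds are uniform on the compact sub-cylinder and do not degenerate as $t\to\inf I'$. (ii) Compatibility of the Cheeger--Gromov diffeomorphisms $\phi_{r,i}$ as $r\nearrow r_0$. The pointed Cheeger--Gromov framework produces maps that agree (up to a fixed small isotopy) on overlapping balls, and the diagonal extraction must be performed in a way that respects this, so that the limit manifold and limit metric are well defined independently of $r$; this is part of the standard machinery but is invisible in your one-line invocation. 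Neither point is a gap in the underlying idea, but both deserve a sentence in a complete write-up.
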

\subsection{Lower bound on the injectivity radius}
To look at what's happening at large times, we want to define a sublimit for 
\begin{align*}
(M,g_s(t),p)
\end{align*}
for some point $p$ in the manifold.
\\

 Thanks to the bound :
$$|\Rm_g|(.,t)\leqslant \frac{C}{t},$$
we have 
$$|\Rm_{g_s}|(.,t) = s|\Rm_g|(.,st)\leqslant s\frac{C}{st} = \frac{C}{t}.$$
\indent In particular, for positive times, there is no problem to get a uniform bound on $|\Rm_{g_s}|$.
\\

 Remains to get a positive lower bound on the injectivity radius. The injectivity radius is not a very convenient quantity to keep track of along a flow, so we are first going to recall a theorem by Cheeger, Gromov and Taylor in \cite{cgt} :
\begin{thm}
Let $(M, g)$ be a complete Riemannian manifold such that :
\begin{itemize}
\item There exists a constant $K\geqslant 0$ with 
$$|\Rm| \leqslant K,$$
\item There exists a point $p \in M$ and a constant $v_0 > 0$ such that
$$Vol_g(B_g(p, r)) \geqslant v_0r^n.$$
\end{itemize}
Then there exists a positive constant $i_0 = i_0(n,Kr^2,v_0)$ such that
$$inj_g(p)>i_0 r>0.$$
\end{thm}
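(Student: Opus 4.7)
The plan is to follow the heat-kernel approach of Cheeger, Gromov and Taylor, combined with an initial scaling reduction and Klingenberg's lemma.

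\textbf{Scaling reduction.} Under the substitution $g \mapsto r^{-2}g$, the hypotheses become $|\Rm| \leq Kr^2$ and $\mathrm{Vol}(B(p,1)) \geq v_0$, while a lower bound $i_0$ on the injectivity radius in the rescaled problem translates into $i_0 r$ in the original metric. It therefore suffices to prove the normalized statement: under $|\Rm|\leq K$ and $\mathrm{Vol}(B(p,1))\geq v_0$, one has $\mathrm{inj}_g(p)\geq i_0(n,K,v_0)>0$.

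\textbf{Dichotomy via Klingenberg's lemma.} Suppose $\epsilon := \mathrm{inj}_g(p) < \pi/\sqrt{K}$; otherwise the Rauch comparison theorem applied to the upper bound $|\Rm|\leq K$ already yields the conclusion, since the conjugate radius at $p$ is at least $\pi/\sqrt{K}$. By Klingenberg's lemma, the injectivity radius is then realized by a closed geodesic loop at $p$ of length $2\epsilon$, producing a second preimage $q$ of $p$ under $\exp_p$ at pullback distance $2\epsilon$ from the origin in $T_pM$.

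\textbf{Heat kernel comparison.} Combining the hypotheses $|\Rm|\leq K$ and $\mathrm{Vol}(B(p,1))\geq v_0$ with local Sobolev inequalities of Saloff-Coste type yields a Gaussian upper bound on the diagonal heat kernel $k(t,p,p)$ of $(M,g)$, with constants depending only on $n$, $K$, and $v_0$. On the other hand, the kernel can be analyzed by lifting through $\exp_p$ to the Euclidean ball inside $T_pM$ equipped with the pullback metric $\tilde g := \exp_p^* g$, which by Rauch is uniformly equivalent to the Euclidean one. The presence of the second preimage $q$ contributes an additional term to the sum-over-preimages representation of $k(t,p,p)$ that is non-negligible as soon as $t$ is of order $\epsilon^2$. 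Quantifying this extra contribution and confronting it with the Gaussian upper bound at the matched timescale forces $\epsilon$ to be bounded below by some explicit $i_0(n,K,v_0) > 0$; reverting the scaling yields $\mathrm{inj}_g(p) \geq i_0(n,Kr^2,v_0)\cdot r$.

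\textbf{Main obstacle.} The crux is making the lower bound on $k(t,p,p)$ coming from the additional preimage $q$ effective enough to beat the Gaussian upper bound once $\epsilon$ is small. This requires careful use of finite propagation speed for the wave equation associated to the Laplacian, or equivalently a localized parametrix construction near $0$ and $q$ in $T_pM$, so as to isolate the contribution of the loop from the leading Euclidean term. A secondary technical point is ensuring that the constants in the Saloff-Coste type Sobolev inequality depend only on $n,K,v_0$ and not on global geometric data; this is handled by localizing the inequality to the ball $B(p,1)$ where the volume hypothesis is available, and then using $|\Rm|\leq K$ to extend the constants.
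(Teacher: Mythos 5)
The paper does not prove this statement; it is quoted verbatim as a theorem of Cheeger, Gromov and Taylor from \cite{cgt} and used as a black box. So there is no internal proof to compare against, and what you have done is reconstruct a sketch of the CGT argument itself. The ingredients you list (scaling to $r=1$, Klingenberg's lemma to produce a geodesic loop of length $2\epsilon$ when $\epsilon := \mathrm{inj}(p) < \pi/\sqrt{K}$, heat-kernel upper bound from the volume lower bound and Bishop--Gromov, lower bound via the pullback metric $\exp_p^* g$ on $T_pM$, and finite propagation speed/parametrix to localize) are indeed the correct ones.

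There is, however, a genuine quantitative gap in the way you formulate the contradiction. You argue that a \emph{single} additional preimage $q$ at pullback distance $2\epsilon$ makes $k(t,p,p)$ ``non-negligibly'' larger at $t \sim \epsilon^2$. But one extra preimage only improves the diagonal lower bound by a bounded multiplicative factor (at most roughly $1+e^{-1}$ at the matched timescale, and at most $2$ at larger times), which never contradicts the upper bound $k(t,p,p) \lesssim C(n,K)\,v_0^{-1}\,t^{-n/2}$: the two sides are simply comparable, with constants independent of $\epsilon$. The contradiction in CGT's argument comes from \emph{iterating} the loop: going around $\gamma$ a total of $k$ times for $k=1,\dots,K$ with $K \sim 1/(2\epsilon)$ produces on the order of $1/\epsilon$ preimages of $p$ inside the unit ball of $T_pM$, yielding a lower bound of order $\epsilon^{-1}$ at $t \asymp 1$ against a volume upper bound of order $v_0^{-1}$, which forces $\epsilon \gtrsim v_0$. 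Relatedly, you should be explicit that $\exp_p$ is \emph{not} a covering map, so there is no literal ``sum-over-preimages representation'' of $k(t,p,p)$; converting the heuristic into a rigorous lower bound via finite propagation speed of the wave equation is precisely the content of CGT's kernel-comparison theorems, and that step is asserted but not carried out here. Finally, note that the one value of $r$ for which the volume hypothesis holds must be propagated to all smaller scales via Bishop--Gromov with the Ricci lower bound implied by $|\mathrm{Rm}|\leqslant K$ before the Saloff-Coste on-diagonal upper bound can be applied at time $t\ll 1$.
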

Now, in our case, to apply Hamilton's compactness theorem. we would like to have 
$$\liminf_{t\to\infty} \left(\frac{inj_{g(t)}(p)}{\sqrt{t}}\right)  > 0.$$
\indent The particularity of our manifolds is that they have a quite large $\mu$-functional \textit{for all $\tau$}.
\\
\\
\indent So, adapting the proof of the first no local collapsing theorem of Perelman (thanks to the $\mu$-functional), we get :
\begin{lem}
Suppose that :
\begin{itemize}
\item There exists $A>0$ such that :
$$\nu^M(g(0)) > -A.$$
\item There exists $C>0$, such that, for all $t>0$,
$$|\Rm|(.,t)\leqslant \frac{C}{t}.$$
\end{itemize}
Then, there exists $v_0(A,C,n)>0$ such that :
\\
For all $t\leqslant 0$ :
$$Vol_{g(t)}(B_{g(t)}(p, \sqrt{t}))\geqslant v_0 t^{\frac{n}{2}}.$$
\begin{proof}
From the proof of Theorem 13.3 in the notes of Kleiner and Lott,
\\
\indent If $Vol_{g(t)}(B_{g(t)}(p, \sqrt{t}))\to 0$ when $t\to \infty$,
\\
\indent Then for a well chosen sequence $f_k$, such that $\int_M\frac{e^{-f_k}}{(4\pi t_k)^{\frac{n}{2}}} = 1$,
$$\mathcal{W}(f_k,g(t_k),t_k)\to -\infty.$$
\\
In particular, there would be $k_0$ such that :
\begin{align}
\mu(g(t_{k_0}),t_{k_0})\leqslant\mathcal{W}(f_{k_0},g(t_{k_0}),t_{k_0})\leqslant -A, \label{izi}
\end{align}
but, since $t\mapsto \mu\left(g(t),(2t_{k_0})-t\right)$ is nondecreasing, we get from \eqref{izi} that :
\begin{align*}
\mu(g(0),2t_{k_0})&\leqslant\mu(g(t_{k_0}),2t_{k_0}-t_{k_0})\\
&=\mu(g(t_{k_0}),t_{k_0})\\
&\leqslant -A,
\end{align*}
which contradicts the hypothesis on $\mu(\tau)$.
\end{proof}
\end{lem}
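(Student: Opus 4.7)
The strategy is to argue by contradiction and run a global version of Perelman's no-local-collapsing argument, exploiting the uniform lower bound on $\nu^M(g(0))$ together with the monotonicity of the $\mu$-functional along the Ricci flow.

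Suppose the conclusion fails; then there is a sequence of times $t_k>0$ for which the volume ratio $V_k := \vol_{g(t_k)}(B_{g(t_k)}(p,\sqrt{t_k}))/t_k^{n/2}$ tends to $0$. For each $k$ I would produce a test function $f_k$ with $\int_M(4\pi t_k)^{-n/2}e^{-f_k}\,dv=1$ such that $\mathcal{W}(f_k,g(t_k),t_k)\to -\infty$. Following the standard construction (as in Theorem~13.3 of Kleiner--Lott), choose $u_k := (4\pi t_k)^{-n/4} e^{-f_k/2}$ to be a smoothed indicator of $B_{g(t_k)}(p,\sqrt{t_k})$, normalized so that $\int u_k^2\,dv=1$, with $|\nabla u_k|$ of order $t_k^{-1/2}\vol(B_{g(t_k)}(p,\sqrt{t_k}))^{-1/2}$. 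The Type III hypothesis gives $|\R|\leqslant n(n-1)C/t_k$, so both $t_k\int \R\,u_k^2\,dv$ and $4t_k\int|\nabla u_k|^2\,dv$ stay bounded independently of $k$. Using the $u$-form of $\mathcal{W}$ recalled at the beginning of the paper, the entropy term $-\int u_k^2\log u_k^2\,dv = \log\vol(B_{g(t_k)}(p,\sqrt{t_k}))+\mathcal{O}(1)$ combines with $-\tfrac{n}{2}\log(4\pi t_k)-n$ to yield $\mathcal{W}(f_k,g(t_k),t_k)=\log V_k+\mathcal{O}(1)\to -\infty$.

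The second step pushes this down to time $0$. By the standard monotonicity of $\mu$ under the Ricci flow coupled with its backwards conjugate heat equation, the map $s\mapsto\mu(g(s),2t_k-s)$ is non-decreasing on $[0,2t_k]$, so
$$\mu(g(0),2t_k)\leqslant \mu(g(t_k),t_k)\leqslant \mathcal{W}(f_k,g(t_k),t_k)\longrightarrow -\infty,$$
contradicting $\mu(g(0),\tau)\geqslant \nu^M(g(0))>-A$ for every $\tau>0$. This yields the existence of $v_0=v_0(A,C,n)>0$.

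The main technical point I expect to require care is the construction of $u_k$: I need a smooth cutoff supported in the metric ball $B_{g(t_k)}(p,\sqrt{t_k})$ whose squared gradient integrates against $dv$ to $\mathcal{O}(1/t_k)$ uniformly in $k$. The Type III bound $|\Rm|\,t_k\leqslant C$ provides precisely the parabolic-scale distortion control needed for a standard Lipschitz bump on that ball to satisfy this estimate, after which the rest of the argument is a direct combination of the $\mathcal{W}$-computation, the monotonicity of $\mu$, and the global hypothesis $\nu^M(g(0))>-A$.
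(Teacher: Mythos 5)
Your proposal is correct and follows essentially the same strategy as the paper: assume the volume ratio degenerates along a sequence $t_k$, build the Kleiner--Lott cutoff $u_k$ on $B_{g(t_k)}(p,\sqrt{t_k})$ to force $\mathcal{W}(f_k,g(t_k),t_k)\to -\infty$, then use the monotonicity of $s\mapsto\mu(g(s),2t_k-s)$ to propagate this down to time $0$ and contradict $\nu^M(g(0))>-A$. The only difference is cosmetic: the paper simply cites the proof of Theorem 13.3 in \cite{KleinLott} for the construction of $f_k$, whereas you unpack that construction and note explicitly where the Type III bound $|\Rm|\leqslant C/t$ enters (to make $t_k\int \R u_k^2\,dv$ and the gradient term $\mathcal{O}(1)$), which is a helpful clarification.
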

As a direct application of Cheeger-Gromov-Taylor theorem, we get :

\begin{cor}\label{curv et inj}
For any Riemannian manifold $(M,g(0))$ such that : 
\begin{itemize}
\item The Ricci flow starting at $(M,g(0))$ is a type III solution : 
\\
$\exists C>0$ such that : $$|\Rm(.,t)|\leqslant \frac{C}{t}.$$
\item There exists $A>0$, such that : 
$$\nu(g(0))\geqslant -A.$$
\end{itemize}
Then, defining : $g_s(t) = \frac{1}{s}g(st)$,
\\
we have for any $t_0>0$ : 
\begin{itemize}
\item $|\Rm_{g_s(t_0)}| \leqslant \frac{C}{t_0}$ (uniform in $s$),
\item $inj_{g_s(t_0)}\geqslant v_0 t_0^{\frac{n}{2}}$ where $v_0 = v_0(n,C,A)$ is coming from the last lemma (it is again uniform in $s$).
\end{itemize}
\end{cor}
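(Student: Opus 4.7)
The plan is to reduce the statement to two already-established ingredients: the scaling behavior of the curvature bound, and the non-collapsing lemma proved just before the corollary, coupled with the Cheeger--Gromov--Taylor injectivity radius estimate.

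First I would dispose of the curvature bound, which is purely computational. Under the parabolic rescaling $g_s(t) = \tfrac{1}{s}g(st)$, the Riemann tensor scales as $|\Rm_{g_s}|(x,t) = s\,|\Rm_g|(x,st)$, so the hypothesis $|\Rm_g|(\cdot,t) \leqslant C/t$ directly yields $|\Rm_{g_s}|(x,t_0) \leqslant s \cdot C/(st_0) = C/t_0$, uniformly in $s$.

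Next, for the injectivity radius bound, the key observation is that Perelman's $\mu$-functional is invariant under parabolic scaling $(g,\tau) \mapsto (\alpha g, \alpha \tau)$, and therefore so is the $\nu$-functional (an infimum over $\tau$). Consequently, for every $s > 0$,
\begin{equation*}
\nu(g_s(0)) \;=\; \nu(g(0)) \;\geqslant\; -A,
\end{equation*}
and each rescaled flow $(M,g_s(t))$ is still a type III Ricci flow with the same constant $C$. Thus the preceding lemma applies uniformly to the whole family $\{g_s\}_{s>0}$ with the same $(A,C)$, yielding at time $t_0$ a uniform lower volume bound of the form $\vol_{g_s(t_0)}\!\bigl(B_{g_s(t_0)}(p,\sqrt{t_0})\bigr) \geqslant v_0\, t_0^{n/2}$ with $v_0 = v_0(n,A,C)$.

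Finally, combining the two ingredients, I would apply the Cheeger--Gromov--Taylor theorem at the point $p$ with curvature bound $K = C/t_0$, radius $r = \sqrt{t_0}$, and volume constant $v_0$. Since $K r^2 = C$ depends only on the fixed constant $C$, the resulting injectivity radius lower bound $i_0 = i_0(n, C, v_0)$ depends only on $(n,A,C)$, giving the uniform estimate claimed. I do not foresee a serious obstacle: the only subtlety is checking that the scale-invariance of $\nu$ allows the non-collapsing lemma to be applied to every $g_s$ with constants depending neither on $s$ nor on $t_0$, which is exactly what parabolic invariance delivers.
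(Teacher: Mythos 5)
Your proof is correct and follows the same route as the paper, which simply cites the preceding non-collapsing lemma followed by the Cheeger--Gromov--Taylor theorem. The detail you make explicit --- using parabolic scale-invariance of $\nu$ to apply the non-collapsing lemma directly to each rescaled flow $g_s$ with the same constant $A$ --- is equivalent to the alternative of applying the lemma once to the original flow at time $st_0$ and then rescaling the resulting volume bound (volumes scale by $s^{-n/2}$, radii by $s^{-1/2}$); both give the same uniform $v_0(n,A,C)$. One minor remark: feeding $r=\sqrt{t_0}$, $K=C/t_0$ into Cheeger--Gromov--Taylor yields $\textup{inj}_{g_s(t_0)}(p) \geqslant i_0\sqrt{t_0}$ with $i_0=i_0(n,C,v_0)$, so the correct power of $t_0$ in the injectivity radius conclusion is $t_0^{1/2}$; the exponent $t_0^{n/2}$ written in the corollary belongs to the volume bound and appears to be a typo in the paper, which your argument implicitly corrects.
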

\begin{rem}
As a consequence, for every type III solution with a uniform lower bound on their $\mu$-functional, one can extract a scaling down sublimit by Hamilton's compactness theorem.

The corollary of the next part states this fact more precisely in our case.
\end{rem}
\subsection{Scaling down manifolds of large $\mu$-functional}
Let us state a corollary of the global pseudolocality coupled with Hamilton's compactness theorem :
\begin{prop}
For any Riemannian manifold $(M,g(0))$ such that 
$$\nu^M(g(0))> -\eta_n,$$

 there exists an immortal type III solution of the Ricci flow starting at $(M,g(0))$ : $t \mapsto g(t)$.
\\

And the scaling down sequence : 
\begin{align*}
(M,g_k(t),p)_k = \left(M,\frac{1}{k}g_k(kt),p\right)_k.
\end{align*}
\indent Has a sublimit which is an immortal type III solution of the Ricci flow.
\begin{rem}
Moreover, we have :
\begin{enumerate}
\item If $(M,g_0)$ is asymptotic to a cone $C(N)$, then the sublimit of blow downs is a Ricci flow coming out of the cone $C(N)$ and asymptotic to it at all times, see section 5.1 and 5.2 in \cite{lz} (where it is possible to use the global pseudolocality rather than the usual pseudolocality result).
\item If $\Ric\geqslant 0$ along the flow, then the blowdown sublimits are gradient expanding solitons by \cite{Ma}.
\end{enumerate}
\end{rem}
\begin{proof}
Thanks to the global pseudolocality pseudolocality, we know that a Ricci flow exists for all positive times, and is a type III solution of the Ricci flow.
Let us first take a sublimit by Hamilton's compactness theorem :
\\
\indent The two hypothesis to check to take a sublimit in the sense of Hamilton are satisfied thanks to the corollary \ref{curv et inj}.
\\
\indent So we get a sublimit $(M_\infty,g_\infty)$. If the Ricci flow was asymptotic to a cone, thanks to the section 5 of \cite {lz}, the sublimit flow comes out of the cone.
\begin{rem}
Note that if we had an actual limit, we could argue that we have an expanding soliton in the limit.
\end{rem}
\end{proof}
\end{prop}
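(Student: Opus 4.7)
The plan is to combine the global pseudolocality result (Proposition \ref{global pseudolocality}) with Hamilton's compactness theorem, using the monotonicity of the $\mu$-functional along the flow to produce uniform geometric control for the rescaled sequence. Writing $g_k(t) := \frac{1}{k} g(kt)$, I want to verify the two hypotheses of Hamilton's compactness theorem at any fixed time $t_0 > 0$: a uniform curvature bound on balls, and a uniform lower bound on the injectivity radius at the basepoint.

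First I would invoke the global pseudolocality proposition: since $\nu(g(0)) > -\eta_n$, there is an immortal type III Ricci flow $(g(t))_{t \geqslant 0}$ starting at $g(0)$, with a bound $|\Rm(g(t))| \leqslant C/t$. Under parabolic scaling this bound is preserved: $|\Rm(g_k(t_0))|(x) = k |\Rm(g(kt_0))|(x) \leqslant C/t_0$, which gives the first hypothesis of Hamilton's theorem uniformly in $k$.

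The main step is the injectivity radius bound. By Cheeger--Gromov--Taylor, it is enough to establish a non-collapsing statement, i.e.\ a uniform lower bound on the volume ratio $\vol_{g_k(t_0)}(B_{g_k(t_0)}(p, \sqrt{t_0}))/t_0^{n/2}$. For this I would adapt Perelman's first no-local-collapsing argument using Perelman's $\mu$-functional directly: the key observation is that by the monotonicity of $t \mapsto \mu(g(t), \tau_0 - t)$ along the Ricci flow, one has
\begin{align*}
\mu(g(kt_0), t_0) = \mu(g(kt_0), (k+1)t_0 - kt_0) \geqslant \mu(g(0), (k+1)t_0) \geqslant \nu(g(0)) > -\eta_n.
\end{align*}
If the volume ratio at $(p, kt_0)$ with respect to $g$ at scale $\sqrt{kt_0}$ were to collapse, then testing $\mathcal{W}(g(kt_0), \cdot, t_0)$ against a suitable cutoff of the form used in Kleiner--Lott (section 13) would give $\mathcal{W} \to -\infty$, contradicting the above lower bound. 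By parabolic rescaling, this is exactly a uniform lower bound for the volume ratio of $g_k(t_0)$ at scale $\sqrt{t_0}$, independent of $k$. Feeding this and the curvature bound into Cheeger--Gromov--Taylor produces a uniform lower bound $\mathrm{inj}_{g_k(t_0)}(p) \geqslant i_0(n, C, \eta_n)\sqrt{t_0}$.

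With both hypotheses in hand, Hamilton's compactness theorem provides a pointed smooth subsequential limit $(M_\infty, g_\infty(t), p_\infty)$ defined for all $t > 0$; the curvature bound $|\Rm(g_\infty(t))| \leqslant C/t$ passes to the limit by smooth convergence, so the limiting flow is itself immortal of type III. For the two supplementary remarks: if $(M, g(0))$ is smoothly asymptotic to $C(N)$, then the blow-down procedure of section 5 of \cite{lz} (whose pseudolocality input can be replaced by our global version) shows the sublimit is a Ricci flow coming out of $C(N)$ and staying asymptotic to it; and if $\Ric \geqslant 0$ along the flow, the theorem of \cite{Ma} promotes a genuine limit to a gradient expanding soliton. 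The subtle point to check carefully is the no-local-collapsing step, where one must ensure the scale $t_0$ used in the $\mathcal{W}$-testing lies in the admissible range controlled by $\nu(g(0))$; but since our lower bound on $\mu$ is obtained at every $\tau > 0$, this causes no difficulty.
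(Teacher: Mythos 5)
Your proof is correct and follows essentially the same route as the paper: invoke the global pseudolocality result to get the immortal type III flow, then verify Hamilton's compactness hypotheses via the parabolic-scaling invariance of the curvature bound and a Perelman-style no-local-collapsing argument using the monotonicity of $\mu$, followed by Cheeger--Gromov--Taylor for the injectivity radius. The only difference is that the paper packages the curvature and injectivity radius bounds into Corollary \ref{curv et inj} (proved in Appendix D), whereas you unroll that argument in place; the monotonicity inequality $\mu(g(kt_0),t_0)\geqslant\mu(g(0),(k+1)t_0)\geqslant\nu(g(0))$ you write down is precisely the content of the paper's appendix lemma.
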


\newpage

\end{document}